\documentclass[10pt]{amsart}
\usepackage{amssymb,amsmath,amsfonts,amscd,comment,float,hyperref,mathrsfs, bbm, xcolor}

\newcommand{\R}{\mathbb{R}} 

\newcommand{\rbracket}[1]{\left(#1\right)}
\newcommand{\sbracket}[1]{\left[#1\right]}
\newcommand{\cbracket}[1]{\left\{#1\right\}}
\newcommand{\norm}[1]{\left\|#1\right\|}

\newcommand{\abs}[1]{ \left| #1 \right|}

\newcommand{\bbR}{\mathbb{R}}

\newcommand{\bbT}{\mathbb{T}}

\newcommand{\calC}{ C}

\newcommand{\calF}{\mathcal{F}}

\newcommand{\calO}{\mathcal{O}}
\newcommand{\calP}{\mathcal{P}}

\newcommand{\calR}{\mathcal{R}}

\newcommand{\calV}{\mathcal{V}}

\newcommand{\scrR}{\mathscr{R}}

\newcommand{\scrT}{\mathscr{T}}

\newcommand{\scrV}{\mathscr{V}}

\newcommand{\frR}{\mathfrak R}

\newcommand{\frT}{\mathfrak T}

\DeclareMathOperator{\supp}{supp}

\def\XXint#1#2#3{{%
\setbox0=\hbox{$#1{#2#3}{\int}$}
\vcenter{\hbox{$#2#3$}}\kern-.5\wd0}}

\renewcommand{\leq}{\leqslant}
\renewcommand{\geq}{\geqslant}
\renewcommand{\subset}{\subseteq}

\newcommand{\eps}{\varepsilon}

\theoremstyle{plain}
\newtheorem{theorem}{Theorem}[section]
\newtheorem{proposition}[theorem]{Proposition}
\newtheorem{corollary}[theorem]{Corollary}

\newtheorem{lemma}[theorem]{Lemma}
\theoremstyle{definition}

\theoremstyle{remark}
\newtheorem{remark}[theorem]{Remark}

\newcommand{\NLIalp}{(-\Delta)^{\alpha-1}}

\newcommand{\HP}{\mathbb{R}^2_+}

\newcommand{\diff}[1]{\mathrm{d}{#1}}
\newcommand{\idiff}[1]{\, \diff{#1}}

\newcommand{\pt}{\partial_t}

\usepackage{tikz}

\begin{document}

%

%
\title[]{Self-similar blow-up profile for the one-dimensional reduction of generalized SQG with infinite energy}

\author[T.~Y.~Hou]{Thomas Y. Hou}
\address{Department of Applied Mathematics, California Institute of Technology, Pasadena, CA 91125, USA}
\email{hou@cms.caltech.edu}

\author[X.~Qin]{Xiang Qin}
\address{Department of Applied Mathematics, California Institute of Technology, Pasadena, CA 91125, USA}
\email{xqin2@caltech.edu}

\author[Y.~Sire]{Yannick Sire}
\address{Department of Mathematics, Johns Hopkins University, 3400 N. Charles Street, Baltimore, MD 21218, USA}
\email{ysire1@jhu.edu}

\author[Y.~Wu]{Yantao Wu}
\address{Department of Mathematics, Johns Hopkins University, 3400 N. Charles Street, Baltimore, MD 21218, USA}
\email{ywu212@jhu.edu}


\begin{abstract}
We study the singularity formation mechanisms of the inviscid generalized Surface Quasi-Geostrophic (gSQG) equation on the whole space $\mathbb{R}^2$ and on the upper half-plane $\mathbb{R}^2_+$, allowing infinite energy. In each case, we derive a one-dimensional reduction that captures the leading-order singular behavior of the original 2D system, and use a fixed-point argument to show the existence of finite-time self-similar blow-up solutions for the 1D systems. We also perform numerical simulations for verification and visualization.
\end{abstract}

\maketitle



\section{Introduction}

In this paper we study singularity formation mechanisms in the inviscid generalized surface quasi-geostrophic (gSQG) equation on the whole plane $\mathbb{R}^2$ and on the upper half-plane $\mathbb{R}^2_+$, allowing configurations of infinite energy. The inviscid gSQG family is the active scalar transport system
\begin{equation}\label{eq:gSQG}
\partial_t \theta + u\cdot \nabla \theta = 0, \qquad
u = \nabla^\perp (-\Delta)^{\alpha-1}\theta, \qquad \alpha\in(0,1),
\end{equation}
so that $\alpha=0$ formally corresponds to the 2D Euler vorticity equation and $\alpha=\tfrac12$ is the (inviscid) SQG equation. In the full plane, $u$ is given by a singular integral (Riesz-type) Biot-Savart law; in the half-plane one imposes a boundary condition compatible with \eqref{eq:gSQG} (typically Dirichlet $\theta|_{\partial \mathbb{R}^2_+}=0$ via odd reflection). The fundamental open problem—especially at and beyond the SQG endpoint—is whether smooth solutions can develop a singularity in finite time.

We primarily focus on providing a rigorous, analytically tractable 1D reduced model, derived from the 2D dynamics under a natural ansatz, for both the full-plane and half-plane settings. The central benefit of the one-dimensional reduction is to isolate (and then analyze) a leading-order blow-up mechanism: under a natural boundary-compatible ansatz, the dynamics restricted to the boundary line closes into a one-dimensional model whose velocity gradient is given by a singular integral operator. This reduction couples advection and stretching in a way reminiscent of the Constantin-Lax-Majda  / De Gregorio family of 1D models, where the authors of \cite{HQWW24,CHH21} show that the De Gregorio model develops a self-similar blow-up profile of expanding type. The strategy is to reformulate the search for a self-similar blow-up profile as a fixed-point existence problem for some operator. In this paper, we adopt the same strategy: we use a dynamic rescaling viewpoint to search for self-similar blow-up profiles, and we construct such profiles for the 1D reduced system via a fixed-point argument, distinguishing expanding versus focusing scenarios. We also perform numerical simulations to provide numerical verification and visualization of the self-similar profiles. 

\subsection{Main results}
Our approach is to first derive a closed one-dimensional reduction of the gSQG dynamics within a structured class, and then use dynamic rescaling together with a Schauder fixed-point argument to construct self-similar blow-up profiles. This leads to an expanding-type scenario on $\mathbb{R}^2$ and a boundary-driven focusing-type profile for the half-plane reduction.

For \eqref{eq:gSQG}, we are able to use the one-dimensional reduction to construct finite-time self-similar blow-up solutions for gSQG on $\bbR^2$. It is worth mentioning that the infinite-energy class is a deliberate choice that enables a closed 1D reduction, and we are not claiming generic finite-energy blow-up. 

\begin{theorem}[gSQG on $\bbR^2$] When $\alpha\in(0,1)$, \eqref{eq:gSQG} admits a finite-time self-similar blow-up solution in the form of $\theta(x,y,t) =  - xy f_*\rbracket{\frac{x}{(T-t)^{\widetilde{c_\ell} }}} $ which has infinite energy and the profile $f_*$ and the parameter $ \widetilde{c_\ell}$ satisfy:
\begin{enumerate}
\item $f_*(x)$ is a nonnegative even function and is monotone decreasing on $[0,\infty)$.
\item $f_*(x)$ has compact support.
\item $f_*(\sqrt{x})$ is convex on $[0,\infty)$.
\item $\widetilde{c_\ell}  = - \frac{1}{2-2\alpha}<0$.
\item $f_*$ is smooth in the interior of its support.
\end{enumerate}
\end{theorem}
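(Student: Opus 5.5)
We start by deriving the closed one-dimensional model. Insert the ansatz $\theta(x,y,t)=-xy\,f(x,t)$ with $f$ even in $x$ into \eqref{eq:gSQG}. Since $\theta$ is linear in $y$, and $\partial_{yy}$ annihilates linear functions, we interpret $(-\Delta)^{\alpha-1}$ on such data as acting only in $x$ (through a limiting or regularized definition, which is the infinite-energy point). This gives $\psi=(-\Delta)^{\alpha-1}\theta=-y\,H$, where $H:=\Lambda^{2\alpha-2}(xf)$ and $\Lambda=(-\partial_{xx})^{1/2}$ is one-dimensional. The velocity is then
\[
u=\nabla^\perp\psi=\bigl(H,\,-yH_x\bigr).
\]
Substituting into the transport equation and dividing by $-y$ yields the closed system
\[
x f_t + H\,(xf)_x - xH_x f=0,
\]
that is, $f_t + \tfrac{H}{x}\,(xf)_x - H_x f=0$. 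Here $H$ is odd, and $H_x=\Lambda^{2\alpha-2}\partial_x(xf)$ is given by a singular integral with an explicit kernel of order $2\alpha-1$. This is a De Gregorio-type model: $H/x$ is the advecting velocity and $H_x$ is the stretching term. We check consistency by verifying that the reconstructed $\theta$ solves \eqref{eq:gSQG} classically wherever $f$ is smooth.

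Next we pass to dynamic rescaling, writing
\[
f(x,t)=C_\omega(\tau)\,F\bigl(x/C_\ell(\tau),\tau\bigr),\qquad c_\omega=\dot C_\omega/C_\omega,\quad c_\ell=-\dot C_\ell/C_\ell .
\]
Because $H$ scales like $L^{3-2\alpha}f$, the nonlinearity has homogeneity $C_\omega C_\ell^{2-2\alpha}$. Balancing the terms in a steady state forces a single free scaling combination, and this is what produces the exponent $\widetilde{c_\ell}=-1/(2-2\alpha)$ once time is reparametrized back to $T-t$. A negative exponent means the profile expands. We fix the two normalization conditions so that the support of $F$ is $[-1,1]$ and $F(0)=1$; these two conditions determine $c_\omega$ and $c_\ell$ as functionals of $F$. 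The steady equation then becomes an ODE in $x$: a transport equation along the characteristics of the velocity $c_\ell x+H/x$ with a source $(c_\omega+H_x)F$. We rewrite it as $F=\mathcal{T}(F)$ by integrating explicitly along characteristics. Working in the variable $s=x^2$ is natural here, since convexity of $F(\sqrt{s})$ is one of the properties we need to propagate.

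The fixed point is found with Schauder's theorem. We take the closed convex set $K$ of even functions $F$ with $0\le F\le 1$, $F(0)=1$, $\supp F\subset[-1,1]$, $F$ decreasing on $[0,1]$, and $s\mapsto F(\sqrt s)$ convex, equipped with a weak topology such as $L^1$ or uniform convergence. There are three things to prove:
\begin{enumerate}
\item Sign and monotonicity estimates for $H/x$ and $H_x$ hold on $K$. These come from the positivity and monotonicity of the Riesz-type kernel $|x-z|^{1-2\alpha}$ acting on odd, monotone data $xF$. Concretely, we show $H_x$ is maximal at $0$ and that $(H/x)$ and $H_x$ are ordered in $s$.
\item $\mathcal{T}$ maps $K$ into $K$, preserving evenness, monotonicity, convexity in $s$, and compact support. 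Compact support is preserved because the total velocity $c_\ell x+H/x$ points inward at $x=1$ under the normalization.
\item $\mathcal{T}$ is continuous and $K$ is compact. Compactness follows because convex monotone bounded functions are compact in $L^1$ and equicontinuous away from the endpoint.
\end{enumerate}
The fixed point is nontrivial because $F(0)=1$ is built into $K$. Smoothness in the interior of the support then follows by bootstrapping: $H$ is smoother than $F$ by order $2-2\alpha$, and the characteristics are regular away from $x=0$ and $x=\pm1$. Finally, undoing the rescaling gives the self-similar solution $\theta=-xyf_*(x/(T-t)^{\widetilde{c_\ell}})$, and the $xy$ growth means it has infinite energy.

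The main obstacle is the invariance step, in particular propagating convexity of $F(\sqrt s)$ through the nonlocal term $H_x F$. I would first try to prove a comparison lemma stating that $s\mapsto H_x(\sqrt s)$ is concave and $s\mapsto H(\sqrt s)/\sqrt s$ is monotone on $K$. The idea is to decompose $F(\sqrt s)$ into superpositions of the extremal elements $(1-s/a)_+$ and check the sign of the kernel on each of them, as is done in the De Gregorio constructions of \cite{HQWW24,CHH21}. With such a lemma in hand, the characteristic formula shows that convexity is preserved.
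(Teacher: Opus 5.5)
There is a genuine gap, and it lies in the invariance and compactness steps.

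\textbf{The shape lemma.} The lemma you propose targets the wrong property, and it is false on $K$. No integration along characteristics is needed. The conservation $f_t(0,t)=0$, together with $F(0)=1$, forces the amplitude rate of $F$ to vanish (in the paper's notation $c_\omega=c_\ell$). With $v=c_\ell x+H$, the steady equation then reads $v\,(xF)'=v'\,(xF)$, i.e.\ $(xF/v)'=0$; note that the transporting velocity is $c_\ell x+H$, not $c_\ell x+H/x$. So the map is simply $F\mapsto\max\bigl(0,v/(x\,v'(0))\bigr)$, and convexity of the image in $s=x^2$ is equivalent to convexity, together with monotonicity, of $s\mapsto H(\sqrt s)/\sqrt s$. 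The paper proves this by integrating by parts twice: $\bigl(\mathscr T_\alpha(f)'/x\bigr)'=c_{1,\alpha}\int_0^\infty (f'(\xi)/\xi)'\,\xi^{\gamma}F_{2,\gamma}'(x/\xi)\,d\xi\ge 0$, with $F_{2,\gamma}'>0$.

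Your lemma pushes in the opposite direction. Since $H(\sqrt s)/\sqrt s=\int_0^1 H_x(t\sqrt s)\,dt$, concavity of $s\mapsto H_x(\sqrt s)$ would make the image concave in $s$. The lemma also fails on $K$. For $\alpha=\frac12$ and $F=(1-x^2)_+$ one has $H_x=-\mathcal H(xF)$, and a direct computation gives
\[
\pi H_x(\sqrt s)=\tfrac43-4s+\sum_{j\ge2}\tfrac{4s^j}{(2j-1)(2j-3)}\quad\text{on } [0,1),
\]
which is convex.

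\textbf{Compactness and nondegeneracy.} Your set $K$ is not compact. The functions $F_\epsilon=(1-x^2/\epsilon)_+$ lie in $K$ for every $\epsilon\in(0,1]$. As $\epsilon\to0$ they converge pointwise to the discontinuous function $\mathbb 1_{\{0\}}$, so they have no uniformly convergent subsequence. In $L^1$ they tend to $0$, so the constraint $F(0)=1$ is not closed there, and nontriviality is not ``built in''. The loss of equicontinuity is at the origin, not at the endpoint, and nothing in your outline stops $\mathcal T$ from producing such concentrated profiles.

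What is missing is a barrier preserved by a quantitative estimate. The paper fixes the free scaling through $c(f)$, normalizing the curvature at the origin. Convexity then gives $\mathscr R_\alpha(f)\ge\max(0,1-x^2)$ and hence $|f'|\le 2$. The opposite degeneration is excluded by the condition $f'_-(1/2)\le-\eta$. Its invariance follows from combining the lower bound \eqref{Equation: Ra1}, $|\mathscr R_\alpha(f)'(x)|\gtrsim x^{2-2\alpha}|f'(x)|/c(f)$, with Lemma~\ref{Lemma: c(f)}, $c(f)\lesssim(1-f(x))^{\min(\alpha,1-\alpha)}(1+x^{-2\alpha})$. The sublinear exponent is exactly what makes a small $\eta$ self-improving.

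In your support normalization the upper barrier $F(\sqrt s)\le 1-s$ comes for free. You would still need an analogous estimate showing that some lower barrier, such as $F(\sqrt s)\ge 1-As$, is preserved by $\mathcal T$. That estimate is the actual content of the invariance step, and it is absent from your proposal.
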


We also consider the (inviscid) generalized surface quasi-geostrophic (gSQG) on $\HP$:
\begin{equation} \label{eq:gSQGHP} \begin{split} 
    \pt \theta(x,y,t) & + u(x,y,t)\cdot\nabla \theta(x,y,t) = 0 \text{ on } \HP \ , \\
        u_1(x,y,t) & = c_{0,\alpha} \iint\limits_{(\xi,\eta)\in\HP}   \frac{(y-\eta)  \theta(\xi,\eta,t) }{((x-\xi)^2+(y-\eta)^2)^{1+\alpha}}  -  \frac{(y+\eta)  \theta(\xi,\eta,t) }{((x-\xi)^2+(y+\eta)^2)^{1+\alpha}}     \idiff\xi\diff\eta \ , \\
    u_2(x,y,t) & =- c_{0,\alpha}  \iint\limits_{(\xi,\eta)\in\HP}  \frac{(x-\xi)  \theta(\xi,\eta,t) }{((x-\xi)^2+(y-\eta)^2)^{1+\alpha}}  - \frac{(x-\xi)  \theta(\xi,\eta,t) }{((x-\xi)^2+(y+\eta)^2)^{1+\alpha}}  \idiff\xi\diff\eta  \ .
\end{split} \end{equation}
If one formally ignores the dependence of $\theta(x,y,t)$ on $y$ and only looks at the above PDE at the $x$-axis, then we have the following one-dimensional reduction of gSQG on $\HP$:
\begin{equation}\label{eq:1DgSQGHP}  \begin{split}
\partial_t \theta(x,t) + u(x,t) \partial_x \theta(x,t) = 0 \ , \\
u(x,t) = \frac{c_{0,\alpha}}{-2\alpha} \int_{\xi\in\bbR} \frac{\theta(\xi,t)}{|x-\xi|^{2\alpha}}\idiff \xi \ .
\end{split} \end{equation}
The purpose of this one-dimensional reduction is to capture the behavior of \eqref{eq:gSQGHP} near the boundary. Moreover, this reduction allows us to consider a 1D problem which is easier to deal with than the original 2D problem. Based on the same technique, we are able to show that this one-dimensional problem has a finite-time self-similar blow-up solution with a smooth profile.

\begin{theorem}[1D reduction of gSQG on $\HP$] When $\alpha\in(0,\frac12)$, the one-dimensional reduced problem \eqref{eq:1DgSQGHP} of gSQG on $\HP$ admits a finite-time self-similar blow-up solution in the form of $\theta(x,t) =  (T-t)^{\widetilde{c_\theta} - \widetilde{c_\ell} } x f_*\rbracket{\frac{x}{(T-t)^{\widetilde{c_\ell} }}} $ where the profile $f_*$ and the $\alpha$-dependent parameters $ \widetilde{c_\ell} , \widetilde{c_\theta} $ satisfy:
\begin{enumerate}
\item $f_*(x)$ is a positive even function and is monotone decreasing on $[0,\infty)$.
\item $f_*(\sqrt{x})$ is convex on $[0,\infty)$.
\item $\widetilde{c_\theta}>0$, $\widetilde{c_\ell}  >0$, and $\widetilde{c_\theta} - 2\alpha \widetilde{c_\ell}  = -1$.
\item $f_*$ is smooth on $\R$.
\end{enumerate}
\end{theorem}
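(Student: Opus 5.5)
Following the strategy announced in the introduction, I will reduce the existence of the self-similar solution to a fixed-point problem for a profile $f$ via dynamic rescaling, and then apply Schauder's theorem on a convex compact set of profiles encoding properties (1)--(2).

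\textbf{Step 1: dynamic rescaling.} Write $\theta(x,t)=C_\theta(\tau)\, x\, f(x/C_\ell(\tau),\tau)$ with rescaled time $\tau$. Since $\theta$ is odd in $x$, the velocity $u=\frac{c_{0,\alpha}}{-2\alpha}\int \theta(\xi)|x-\xi|^{-2\alpha}\,d\xi$ is odd as well, so $u(0)=0$. This lets me write $u = -x\,v$, where $v$ is an even function given by a linear operator applied to $f$. Define the normalization rates $c_\ell=\dot C_\ell/C_\ell$ and $c_\theta=\dot C_\theta/C_\theta$. The profile equation then reads
\[
f_\tau + (c_\ell x - xv)f_x = (c_\theta - c_\ell + v) f \quad\text{(up to exact constants)} .
\]
Scaling $u\sim C_\theta C_\ell^{2-2\alpha}$ forces the time rescaling $d\tau/dt = C_\theta C_\ell^{1-2\alpha}$. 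I will fix $c_\ell$ and $c_\theta$ by two normalization conditions: $f(0)=1$, and one more condition such as $f''(0)=-1$, or the normalization of the square-root convex variable. A steady state then gives $\theta=(T-t)^{\widetilde{c_\theta}-\widetilde{c_\ell}}xf_*(x/(T-t)^{\widetilde{c_\ell}})$ with $\widetilde{c}= -c/(c_\theta+(1-2\alpha)c_\ell)$. The relation $\widetilde{c_\theta}-2\alpha\widetilde{c_\ell}=-1$ then follows by balancing the scaling of $\partial_t\theta$ against that of $u\partial_x\theta$, and signs come from $c_\ell,c_\theta$.

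\textbf{Step 2: the fixed-point map.} Work in the variable $s=x^2$ with $g(s)=f(\sqrt s)$. Let $K$ be the set of $g$ with $g(0)=1$ that are nonnegative, nonincreasing, convex, and obey the normalization. $K$ is convex, and it is compact in local uniform topology by monotonicity and convexity (Helly/Arzel\`a--Ascoli), with suitable decay weights. The map $T$ sends $g$ to the steady solution of the linear transport equation with coefficient $v[g]$, obtained by integrating along characteristics in closed form:
\[
\frac{f'}{f} = \frac{c_\theta - c_\ell+v}{x(c_\ell - v)} .
\]
Equivalently, $T$ is the long-time limit of the linearized flow. The key structural input is a sign lemma for the kernel. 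For $g\in K$, after symmetrizing $\int \xi f(\xi)\big(|x-\xi|^{-2\alpha}-|x+\xi|^{-2\alpha}\big)d\xi$, I expect $v(\sqrt s)$ to be monotone and concave/convex in $s$ in the right way, because of the positivity of the kernel difference. The restriction $\alpha<\tfrac12$ enters here: it makes the kernel locally integrable with favorable monotonicity. With these properties, the ODE formula shows that $T$ preserves positivity, evenness, monotonicity and convexity in $s$.

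\textbf{Step 3: conclusion.} Continuity of $T$ on $K$ follows from continuity of $g\mapsto v[g]$ under locally uniform convergence with tail control, so Schauder gives a fixed point $f_*$. To get $\widetilde{c_\ell}>0$ (focusing) and $\widetilde{c_\theta}>0$, I compute $c_\ell$ and $c_\theta$ from the normalizations as explicit integrals of $f_*$ and check their signs. Positivity of $f_*$ everywhere, as opposed to compact support, comes from $c_\ell - v$ not vanishing for $x>0$ in the focusing regime. Smoothness then follows by bootstrapping: $v$ is one order smoother than $f$ through the $|x|^{-2\alpha}$ kernel, and the characteristic ODE gains regularity each time.

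\textbf{Main obstacle.} I expect the hardest part to be the sign/convexity lemma for $v[g]$ on $K$, i.e., showing that the ODE map preserves convexity of $f(\sqrt{\cdot})$. I would try first to write $v$ as a superposition $\int g'(\sigma)\,k(s,\sigma)\,d\sigma$ against an explicit kernel $k$ and verify the monotonicity of $k$ directly.
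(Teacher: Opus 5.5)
Your architecture is the paper's. The map $f\mapsto\exp\bigl(-\int_0^x\frac{\frT_\alpha(f)(y)}{y(1+\frT_\alpha(f)(y))}\,dy\bigr)$ is your characteristic formula. The lemma you single out is proved as you suggest, by two integrations by parts giving $\bigl(\frT_\alpha(f)'/x\bigr)'=-2c_{0,\alpha}\int_0^\infty\bigl(f'(\xi)/\xi\bigr)'\xi^{-2\alpha}F_{2,-2\alpha}'(x/\xi)\,d\xi\le0$ with $F_{2,-2\alpha}'>0$. However, two load-bearing pieces are missing.

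First, the invariant set. Note that $f\equiv1$ is itself a fixed point, since $\frT_\alpha(1)=0$, and shape constraints plus a normalization do not keep Schauder away from it.
If you impose $f''(0)=-1$ as an equality, the set is not closed under your limits. For example, $g_n(s)=\max(1-s,\,1-\tfrac1n-\epsilon s,\,0)\to\max(1-\epsilon s,0)$. The closure then contains profiles with $\mathfrak c(f)\to0$ as $\epsilon\to0$, where the constant fixed by a curvature normalization (proportional to $\mathfrak c(f)$) vanishes. If you impose it as an inequality, $f\equiv1$ is admitted outright.
The paper uses the barrier $f'(t_0)\le-t_0^{\alpha-\frac32}$ together with envelopes $f_l\le f\le f_u$, $f_u=\min(1,(t_0/x)^{\delta_u})$, $\delta_u\in(1-2\alpha,1)$. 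The real work is proving these are invariant. The barrier gives $\frT_\alpha(f)(y)\ge c_{0,\alpha}'''t_0^{\frac12-\alpha}$ for $y\ge t_0$, hence decay of $\frR_\alpha(f)$ at rate $\delta_u$. The bound $f\ge f_l$ bounds $\mathfrak c(f)$ above, and hence $\frR_\alpha(f)$ below. Finally, $t_0$ and $\delta_l$ are tuned in \eqref{Equation: t0}--\eqref{Equation: deltal} so that the barrier reproduces itself.
The envelope $f_u$ is also what makes $\mathfrak b(f)=4c_{0,\alpha}\int_0^\infty f\xi^{-2\alpha}\,d\xi$ finite, i.e.\ what makes $U$ and $c_\ell=1+\mathfrak b(f)$ exist. ``Suitable decay weights'' for compactness supply none of this.

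Second, finite-time blow-up is not a sign check. By your own formula you need $D:=c_\theta+(1-2\alpha)c_\ell>0$; otherwise $T-t<0$. In the paper's normalization, $c_\theta=1$ and $c_\ell=1+\mathfrak b(f_*)$ are both trivially positive. The actual content is $2\alpha c_\ell>1$, i.e.\ $\mathfrak b(f_*)>\frac1{2\alpha}-1$, which no explicit formula displays.
The paper proves it by tail self-consistency. Concavity of $\frT_\alpha(f_*)(\sqrt{\cdot})$ gives $\frT_\alpha(f_*)\le\min(\mathfrak b,\frac{\mathfrak c}{2}x^2)$. The fixed-point identity then yields $f_*(x)\ge(1+\frac{\mathfrak c}{2})^{-1/2}x^{-\mathfrak b/(1+\mathfrak b)}$ for $x>1$. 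Finiteness of $\mathfrak b(f_*)$ forces $\frac{\mathfrak b}{1+\mathfrak b}+2\alpha>1$, and the signs of $\widetilde{c_\theta},\widetilde{c_\ell}$ follow.

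Minor points:
\begin{itemize}
\item The kernel $|x|^{-2\alpha}$ gains $1-2\alpha$ derivatives (Lemma \ref{Lemma: RegMapTaHP}), not one; this still suffices to bootstrap.
\item Positivity of $f_*$ comes simply from $1+\frT_\alpha\ge1$ in the exponential formula, not from being in the focusing regime.
\item With $C_\theta$ multiplying $xf$, you need $\widetilde{c_\theta}=-(c_\theta+c_\ell)/D$ to get $\widetilde{c_\theta}-2\alpha\widetilde{c_\ell}=-1$.
\end{itemize}
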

\subsection{Our contributions}

Our first main contribution in Section \ref{Section: SQGR2} is a rigorous and invertible one-dimensional reduction for inviscid gSQG on the full plane within a natural infinite-energy class. Under a simple structured ansatz, the 2D dynamics close to a 1D transport–stretching model whose velocity gradient is given by an explicit singular integral operator. This reduction is not merely formal: we show that solutions of the reduced 1D system lift to genuine solutions of the original 2D equation in this class. Building on a dynamic-rescaling formulation, we convert the search for finite-time self-similar blow-up into a profile problem and construct an expanding-type self-similar profile. The key analytic innovation is a Schauder fixed-point framework on a carefully tailored function set that encodes monotonicity, a convexity-type “shape control,” and appropriate barriers; these features are designed to make the nonlinear map well-defined, invariant, continuous, and compact in the chosen topology, ultimately yielding a compactly supported profile which is smooth in the interior of its support.

Our second main contribution in Section \ref{Section: SQGHP} is a parallel program for the upper half-plane, where the boundary can create a hyperbolic flow geometry and a distinct (focusing-type) blow-up mechanism. We derive a boundary-based 1D reduced model that captures the leading-order behavior near the boundary and again formulate the self-similar profile system via dynamic rescaling. In contrast to the full-plane setting, the half-plane profile is not compactly supported and exhibits long-range behavior; accordingly, the functional setting and barriers must be redesigned to encode decay and to control tails, while still ensuring the invariance, continuity, and compactness needed for Schauder’s theorem. This produces a smooth focusing-type self-similar profile for the reduced model and provides a mathematically tractable framework—supported by numerics—for understanding boundary-driven singularity formation scenarios in gSQG.

\subsection{Related work}
For the critical dissipative SQG equation, global regularity is known; see, for instance, \cite{Kiselev2006GlobalWF, CaffarelliVasseur10}. For regularity of coupled systems involving subcritical SQG, see \cite{lin2026liquidcrystalstopologicalvorticity, SWZ24}. For the inviscid SQG/gSQG equations, local well-posedness is available in sufficiently regular Sobolev and H\"older settings on $\bbR^2$, as well as in boundary settings and more recent anisotropic H\"older frameworks adapted to the half-plane; representative results include \cite{CMT94, Inci18, Wu05, Constantin_Nguyen_18, CN18, JKY25, CHOI2025113521}. These works also identify sharp thresholds and critical regimes where well-posedness can fail.

A complementary line of research concerns instability, ill-posedness, and low-regularity behavior. Strong ill-posedness, nonexistence, and loss-of-regularity phenomena for SQG/gSQG-type equations have been established in various Sobolev and H\"older classes; see \cite{Inci18, CM24, JK24, CZO24}. At low regularity, weak solutions are classical in many settings (often via vanishing viscosity), see \cite{Resnick95, BGB15, Marchand08}, while nonuniqueness phenomena for related active scalars have been constructed via convex integration in, e.g., \cite{BSV19, CKL21, IM21}. In the patch setting, there are both singularity and regularity results near boundaries, including finite-time singularity scenarios and exclusion criteria for certain splash-type behaviors; see \cite{KRYZ16, GancedoPatel21, JeonZlatos21}. Finally, there are also results constraining self-similar blow-up profiles under integrability/decay assumptions, which are complementary to constructive approaches; see \cite{BRONZI2025266}.

The remainder of the paper is organized as follows. In Section \ref{Section: SQGR2} we derive and analyze the 1D reduction for gSQG on $\bbR^2$, introduce dynamic rescaling, and construct expanding-type self-similar profiles. In Section \ref{Section: SQGHP} we develop the corresponding theory for the half-plane $\bbR^2_+$, again deriving a boundary-based 1D reduction and constructing focusing-type blow-up profiles consistent with numerics. Section \ref{Section: Numerical Simulation} presents numerical simulations for the 1D reduced model and for the gSQG on $\HP$, respectively, and the appendix collects auxiliary analytic estimates used in the fixed-point argument.

\section{Generalized SQG  on $\bbR^2$}\label{Section: SQGR2}
The (inviscid) generalized surface quasi-geostrophic equation (gSQG) is defined as follows on $\bbR^2$:
\begin{equation}\label{Equation: gSQG-R2}
\begin{cases}
    \pt \theta(x,y,t)  + u(x,y,t)\cdot\nabla \theta(x,y,t) = 0   \\
    u(x,y,t) = \nabla^\perp (-\Delta)^{\alpha-1}\theta(x,y,t) 
\end{cases}\tag{gSQG-$\bbR^2$} \ ,
\end{equation}
where the exponent $\alpha\in(0,1)$ and $u=(u_1,u_2)$ is the Riesz transform of $\theta$ and can be explicitly expressed as the following singular integral:
\begin{align*}
  u_1(x,y,t) = \calR_2\theta = c_{0,\alpha} \iint_{(\xi,\eta)\in\bbR^2} \frac{(y-\eta) \theta(\xi,\eta,t)}{((x-\xi)^2+(y-\eta)^2)^{1+\alpha}} \idiff\xi\diff\eta \ , \\
    u_2(x,y,t) =-\calR_1\theta = -c_{0,\alpha} \iint_{(\xi,\eta)\in\bbR^2} \frac{(x-\xi) \theta(\xi,\eta,t)}{((x-\xi)^2+(y-\eta)^2)^{1+\alpha}} \idiff\xi\diff\eta  \ ,
\end{align*}
where $c_{0,\alpha} = \frac{2^{2\alpha-1} \Gamma(1+\alpha)}{\pi \Gamma(1-\alpha)}>0$.
We note that if $\alpha=0$, then \eqref{Equation: gSQG-R2} is just the 2D Euler Equation. If $\alpha=\frac12$, then \eqref{Equation: gSQG-R2} is the SQG Equation.


\subsection{1D reduction for \eqref{Equation: gSQG-R2}} Consider the case that $\theta(x,y,t)$ has the following one-dimensional reduction:
$$ \theta(x,y,t) = y \omega(x,t) \ , $$
then one can compute that \eqref{Equation: gSQG-R2} becomes 
\begin{equation}\label{Equation: gSQG-R2-1Dreduction} \partial_t \omega(x,t) + u_1 \partial_x \omega + \frac{u_2}{y} \omega = 0 \ .\end{equation}
Thus, we have the following explicit expression for $\frac{u_2}{y}$ and $\partial_x u_1$:
\begin{align*}  
 \frac{u_2}{y} = &- c_{0,\alpha} \frac{1}{y} \iint_{(\xi,\eta)\in\bbR^2} \frac{(x-\xi) \eta \omega(\xi,t)}{((x-\xi)^2+(y-\eta)^2)^{1+\alpha}} \idiff\xi\diff\eta   \\
= &- c_{0,\alpha} \frac{1}{y} \iint_{(\xi,\eta)\in\bbR^2} \frac{(x-\xi) (y+\eta) \omega(\xi,t)}{((x-\xi)^2+\eta^2)^{1+\alpha}} \idiff\xi\diff\eta   \\
= &- c_{0,\alpha} \iint_{(\xi,\eta)\in\bbR^2} \frac{(x-\xi) \omega(\xi,t)}{((x-\xi)^2+\eta^2)^{1+\alpha}} \idiff\xi\diff\eta   \\
= &- c_{1,\alpha}\text{P.V.}  \int_{\xi\in\bbR} \frac{(x-\xi) \omega(\xi,t)}{|x-\xi|^{1+2\alpha}} \idiff \xi  \ , 
\end{align*}

\begin{align*}  
\partial_x u_1  = & -2c_{0,\alpha} (\alpha+1) \iint_{(\xi,\eta)\in\bbR^2} \frac{(x-\xi) (y-\eta)\eta \omega(\xi,t)}{((x-\xi)^2+(y-\eta)^2)^{\alpha+2}} \idiff\xi\diff\eta   \\
 = & 2c_{0,\alpha} (\alpha+1) \iint_{(\xi,\eta)\in\bbR^2} \frac{\eta^2 (x-\xi) \omega(\xi,t)}{((x-\xi)^2+\eta^2)^{\alpha+2}} \idiff\xi\diff\eta   \\
 = & c_{1,\alpha} \text{P.V.} \int_{\xi\in\bbR} \frac{(x-\xi)\omega(\xi,t)}{|x-\xi|^{1+2\alpha}} \idiff \xi   \ ,
\end{align*}
where constant $c_{1,\alpha} :=  c_{0,\alpha} \int_{\eta\in\bbR} (1+\eta^2)^{-1-\alpha} \idiff\eta = 2(\alpha+1)c_{0,\alpha} \int_{\eta\in\bbR} \frac{\eta^2}{(1+\eta^2)^{2+\alpha}}\idiff\eta = \frac{2^{2\alpha-1} \Gamma(\frac12+\alpha) }{ \sqrt{\pi} \Gamma(1-\alpha) } $. We note that $\partial_x u_1(x,0,t) = -  \partial_y u_2(x,0,t) = -\lim_{y\to0}u_2/y$ so the reduction naturally gives opposite sign for $u_2/y$ and $\partial_x u_1$.

Thus, let us consider the singular integral operator 
\begin{equation}
\calP_{1,\alpha} \omega(x) := c_{1,\alpha} \text{ P.V.} \int_{\xi\in \bbR} \omega(\xi)\frac{x-\xi}{\abs{x-\xi}^{1+2\alpha}}\, \idiff \xi  \ ,
\end{equation}
which becomes the Hilbert transform when $\alpha=\frac12$. 
 
Thus \eqref{Equation: gSQG-R2-1Dreduction} can be written as
 \begin{equation}\label{Equation: 1D-SQG}
\begin{cases}
    \pt \omega(x,t)  + u \partial_x \omega  = \calP_{1,\alpha}(\omega)\, \omega   \\
   \partial_x u = \calP_{1,\alpha}(\omega)
\end{cases} \ , \tag{1D-gSQG-$\bbR^2$}
\end{equation}
where one can also write the operator $u = \calP_{0,\alpha}(\omega)$ by
\begin{equation}
(\calP_{0,\alpha}\omega)(x) = 
\begin{cases}
 \frac{c_{1,\alpha}}{1-2\alpha}  \int\limits_{\xi \in\bbR} \omega(\xi)|x-\xi|^{1-2\alpha}\, \idiff \xi & \text{ if } \alpha \neq \frac12 
 \\
c_{1,\alpha}  \int\limits_{\xi \in\bbR} \omega(\xi)\log(|x-\xi|)\, \idiff \xi & \text{ if } \alpha = \frac12 
\end{cases}  \ .
\end{equation}

Moreover, this one-dimensional reduction is invertible:  given solution $\omega(x,t), u(x,t)$ of one-dimensional PDE \eqref{Equation: 1D-SQG}, one can define $\theta(x,y,t) := y \omega(x,t)$ and show that this $\theta$ is the solution to \eqref{Equation: gSQG-R2}. To show this, let us define $\psi(x,t) := (-\Delta_{\bbR^1})^{\alpha-1} \omega(x,t)$, and $\Psi(x,y,t) := y \psi(x,t)$. Thus the following computation verifies that $\theta(x,y,t) = (-\Delta)^{1-\alpha}\Psi(x,y,t)$, that is, $\Psi$ is the stream function on $\bbR^2$:\begin{align*} 
& (-\Delta)^{1-\alpha}\Psi(x,y,t) \\
= & \frac{2^{2-2\alpha} \Gamma(2-\alpha)}{\pi | \Gamma(\alpha-1)|}  \iint_{(\xi,\eta)\in\bbR^2} \frac{y\psi(x,t) - \eta \psi(\xi,t)}{\rbracket{(x-\xi)^2+(y-\eta)^2}^{1+(1-\alpha)}} \idiff\xi\diff\eta\\
 = & \frac{2^{2-2\alpha} \Gamma(2-\alpha)}{\pi | \Gamma(\alpha-1)|} \iint_{(\xi,\eta)\in\bbR^2} \frac{(y\psi(x,t) - y \psi(\xi,t)) + (y \psi(\xi,t) - \eta \psi(\xi,t))}{\rbracket{(x-\xi)^2+(y-\eta)^2}^{1+(1-\alpha)}} \idiff\xi\diff\eta\\
 = & \frac{2^{2-2\alpha} \Gamma(2-\alpha)}{\pi | \Gamma(\alpha-1)|}  y \iint_{(\xi,\eta)\in\bbR^2} \frac{\psi(x,t) - \psi(\xi,t)}{\rbracket{(x-\xi)^2+(y-\eta)^2}^{1+(1-\alpha)}} \idiff\xi\diff\eta \\
 = & y \frac{2^{2-2\alpha} \Gamma(2-\alpha)}{\pi | \Gamma(\alpha-1)|} \frac{\sqrt{\pi} \Gamma(\frac32-\alpha)}{\Gamma(2-\alpha)} \text{ P.V.} \int_{\xi\in\bbR} \frac{\psi(x,t) - \psi(\xi,t)}{\abs{x-\xi}^{1+2(1-\alpha)}} \idiff\xi\\
 = & y (-\Delta_{\bbR^1})^{1-\alpha} \psi(x,t) = y \omega(x,t) = \theta(x,y,t) \ .
\end{align*}

Therefore, the solution to the one-dimensional problem \eqref{Equation: 1D-SQG} corresponds to a solution to the two-dimensional problem \eqref{Equation: gSQG-R2}. In view of the scaling property of \eqref{Equation: gSQG-R2} and \eqref{Equation: 1D-SQG}, we are particularly interested in self-similar finite-time blow-up solutions of the form
\begin{equation}
\frac{\theta(x,y,t)}{y} = {\omega(x,t)} = (T-t)^{c_\omega} \Omega\rbracket{ \frac{x}{(T-t)^{c_\ell }}} \ ,
\end{equation}
where $\Omega$ is referred to as the self-similar profile, and $c_\omega, c_\ell $ are the scaling factors. 

Substituting this ansatz into equation \eqref{Equation: 1D-SQG} yields
\begin{align*} 
& -c_\omega (T-t)^{c_\omega-1} \Omega(z) + c_\ell (T-t)^{c_\omega-1}z \Omega'(z) + (T-t)^{2c_\omega + (1-2\alpha)c_\ell } \calP_{0,\alpha}(\Omega)(z) \Omega'(z) \\
 = & (T-t)^{2c_\omega + (1-2\alpha)c_\ell } \calP_{1,\alpha}(\Omega)(z) \Omega(z)  \ , \end{align*}
 where $z=\frac{x}{(T-t)^{c_\ell }}$. Balancing the above equation yields 
 $ (1-2\alpha) c_\ell  + c_\omega + 1 =0 $ and an equation for the self-similar profile
\begin{equation}\label{Equation: SelfSimilarProfileOmega} (c_\ell   x +   \calP_{0,\alpha}(\Omega)(x)) \Omega'(x)  =  (c_\omega + \calP_{1,\alpha}(\Omega)(x)) \Omega(x) \ . \end{equation}
We note that if $(\Omega(x), c_\ell , c_\omega)$ is a solution to the above equation, then so is 
\begin{equation}\label{Equation: scaling invariant}
(\Omega_{\lambda,\gamma}(x), c_{l,\lambda,\gamma}, c_{\omega,\lambda,\gamma}) = (\gamma\Omega(\lambda x), \gamma \lambda^{2\alpha-1} c_\ell ,  \gamma \lambda^{2\alpha-1} c_\omega )  \ .
\end{equation}

This means that we can relax the restriction $c_\omega + (1-2\alpha) c_\ell  = -1  $ to any $c_\omega + (1- 2\alpha)c_\ell  < 0 $. In fact, for any $(c_\omega, c_\ell )$ such that $c_\omega + (1- 2\alpha ) c_\ell  < 0 $, one can define
$$ (\widetilde{c_\omega},\widetilde{c_\ell }) = \rbracket{\frac{c_\omega}{ (2\alpha-1) c_\ell  - c_\omega} , \frac{c_\ell }{(2\alpha-1) c_\ell  - c_\omega} } \ ,$$
which satisfies $\widetilde{c_\omega} +(1 -2\alpha)\widetilde{c_\ell }=-1$.

 Furthermore, we look for solutions that satisfy the following conditions:
 \begin{itemize}
 \item Odd symmetry: $\Omega(x)$ is an odd function of $x$, i.e., $\Omega(-x) = - \Omega(x)$.
 \item Regularity: $\Omega\in H^1_{loc}(\bbR)$.
 \item Non-degeneracy: $\Omega'(0)\neq0$.
 \end{itemize}

In view of \eqref{Equation: scaling invariant}, we can take $\Omega'(0)=-1$ without loss of generality. Thus, let us consider the following change of variable
$$ f(x) :=\frac{\Omega(x)}{x\Omega'(0)} = -\frac{\Omega(x)}{x}, \quad v(x) := c_\ell  x + \calP_{0,\alpha}(\Omega)(x), \quad g(x) := \max\rbracket{0, \frac{v(x)}{xv'(0)}} \ . $$
We note that 
\begin{equation}\begin{split}
v(x)  & =
 \begin{cases}
c_\ell  x +  \frac{-c_{1,\alpha} }{1-2\alpha}\int_{\xi\in\bbR} \xi f(\xi) |x-\xi|^{1-2\alpha}\, \idiff \xi & \text{ when } \alpha\neq\frac12  \\
c_\ell x - c_{1,\alpha} \int_{\xi\in\bbR} \xi f(\xi) \log(|x-\xi|)\, \idiff \xi & \text{ when } \alpha=\frac12 \\
 \end{cases}  \\
 & =
 \begin{cases}
c_\ell  x +  \frac{c_{1,\alpha}}{1-2\alpha}\int_{\xi>0} \xi f(\xi) ( |x+\xi|^{1-2\alpha} -  |x-\xi|^{1-2\alpha}) \, \idiff \xi & \text{ when } \alpha\neq\frac12 \\
c_\ell x + c_{1,\alpha} \int_{\xi>0} \xi f(\xi) \log\rbracket{\frac{|x+\xi|}{|x-\xi|}} \, \idiff \xi & \text{ when } \alpha=\frac12 \\
 \end{cases} \ , \\
  v'(x)  & = c_\ell  -  c_{1,\alpha} \text{ P.V.}\int_{\xi\in\bbR} \xi f(\xi) \frac{x-\xi}{|x-\xi|^{1+2\alpha}} \, \idiff \xi  \ .
\end{split}\end{equation}

Thus \eqref{Equation: SelfSimilarProfileOmega} can be written as 
$$ \frac{f'}{f} =  \frac{g'}{g}, \quad f(0) = g(0)=1, c_\omega = c_\ell  \ .$$
As a consequence, we reduce the equation \eqref{Equation: SelfSimilarProfileOmega} for the self-similar profile to the following form:
\begin{equation}\label{Equation: fixedPointEqn} \begin{split}
f(x) & = \max\rbracket{0, \frac{v(x)}{xv'(0)}} \ , \\
  \frac{v(x)}{x}   & =\begin{cases}
c_\ell   +  \frac{c_{1,\alpha} }{1-2\alpha}\int_{\xi>0} f(\xi)\frac{\xi}{x} ( |x+\xi|^{1-2\alpha} -  |x-\xi|^{1-2\alpha}) \, \idiff \xi & \text{ when } \alpha\neq\frac12 \\
c_\ell  + c_{1,\alpha} \int_{\xi>0} f(\xi)\frac{\xi}{x} \log\rbracket{\frac{|x+\xi|}{|x-\xi|}} \, \idiff \xi & \text{ when } \alpha=\frac12 \end{cases} \ .
\end{split}  \end{equation}


\subsection{Existence of solutions by a fixed-point method}
Our goal of this section is to show that the nonlinear system \eqref{Equation: fixedPointEqn} admits non-trivial solutions. We do so by converting the problem into a fixed-point problem of some nonlinear map. In detail, we are going to define a linear operator $\scrT_\alpha(f)(x) := \frac{v(x)}{x}- v'(0)$ which simplifies \eqref{Equation: fixedPointEqn} to $f(x) = \max(0,1+\frac{\scrT_\alpha(f)(x)}{v'(0)})$, and then the problem becomes finding a fixed-point of some nonlinear map $\scrR_\alpha$ which will be defined afterwards. Notice that we do not want to directly define $\scrR_\alpha(f)$ as $\max(0,1+\frac{\scrT_\alpha(f)(x)}{v'(0)})$ because the value of $c_\omega= c_\ell $  is underdetermined in the definition of $v'(0)$. To make $\scrR_\alpha(f)$ a well-defined map, we instead let $\scrR_\alpha(f):=\max(0,1+\frac{\scrT_\alpha(f)(x)}{c(f)})$ where the functional $c(f)$ depends only on $f$ and is chosen to guarantee the renormalization $\lim_{x\to0+}\frac{f_*'(x)}{2x}=-1$, i.e., $f_*(x)\approx 1-x^2$ when $x\to0+$, for the self-similar profile $f_*$. The full details of one-to-one correspondence between solutions to \eqref{Equation: fixedPointEqn} and fixed-points of $\scrR_\alpha$ will be shown in Proposition \ref{Proposition: relation1Dmodel2DSQGR2}.

To show existence of a fixed-point of the proposed map $\scrR_\alpha(f)$, we are going to use the Schauder fixed-point theorem. The reasons to use Schauder rather than contraction mapping are: primarily, we mostly care about existence and do not care about uniqueness and rate of convergence; secondly,  it is more difficult to show quantitative results on contraction rather than qualitative results such as continuity, compactness, and convexity.

\begin{remark}  This fixed-point strategy follows the approach developed in \cite{HQWW24} for the one-dimensional generalized Constantin–Lax–Majda (gCLM) equation: one introduces a nonlinear operator on a suitably chosen invariant, convex, and compact set of monotone “shape-controlled” functions, verifies that the operator maps this set into itself and is continuous, applies the Schauder fixed-point theorem to obtain a profile, and then bootstraps regularity. In both settings, the monotonicity and convexity-type constraints are carefully constructed so that—after rewriting the nonlocal terms in a form amenable to integration by parts—the relevant operators preserve qualitative shape properties and yield the compactness estimates required by Schauder’s theorem. The precise choice of constraints is, however, kernel-dependent: compared with gCLM, the gSQG reduction involves a different singularity order and far-field behavior, which necessitates working in a different topology (e.g., weighted control and different decay/barrier conditions) to establish invariance, continuity, and compactness for our nonlinear map $\scrR_\alpha$. Accordingly, we select an appropriate Banach space $\calV_1$ tailored to these operator features.
\end{remark}

 To this end, we need to select an appropriate Banach function space $\calV_1$ in which we can establish invariance, continuity, and compactness of our nonlinear map $\scrR_\alpha$.

\subsubsection{Details of function set $\calV_1$ as the invariant set for the fixed-point method.}
Consider a Banach space of continuous even functions,
$$ \calV_0 := \{ f\in C(\bbR)\, : \, f(-x) = f(x), \norm{\rho f}_{L^\infty} < \infty \} \ , $$
endowed with a weighted $L^\infty$-norm $\norm{\rho f}_{L^\infty}$, referred to as the $L^\infty_\rho$-norm, where $\rho(x) = (1+x)^{-\alpha}$. Moreover, we consider a closed (in the $L^\infty_\rho$-norm) and convex subset of $\calV_0$,
$$ \calV_1 :=  \cbracket{ f\in\calV_0 \, : \, 
\begin{split} 
& f(0)=1,f \text{ is nonnegative and non-increasing on } [0,\infty), \\
& f(\sqrt{x}) \text{ is convex in } x, f\geq \max(0,1-x^2), f'_-(1/2)\leq -\eta
\end{split} }  \ , $$
where 
$$ \eta = 2\rbracket{ \frac{3}{2(3-2\alpha)(5-2\alpha)(1+4^\alpha)} }^{\frac1{\min(\alpha,1-\alpha)}} \ .$$
The function set $\calV_1$ will act as the invariant set for our fixed-point method. Here and below, we use $f_-'$ and $f_+'$ to denote the left and the right derivatives of a function $f$. The condition $f_-'(1/2) \leq -\eta$ with $\eta>0$ is to rule out the constant function from the  set $\calV_1$. In fact, $f\in\calV_1$ implies that 
\begin{equation}\label{Equation: fUpperLowerBound}  \max(0,1-x^2) \leq f(x) \leq \max(1-\eta x^2, 1-\eta/4) \ ,\end{equation}
where the upper bound follows from the assumptions that $f(\sqrt{x})$ is convex in $x$, $f_-'(1/2)\leq -\eta$, and $f(x)$ is non-increasing on $[0,\infty)$. On the other hand, for $\calV_1$ to be an invariant set in our fixed-point method, $\eta$ needs to be sufficiently small. The particular choice of $\eta$ here is actually determined through a bootstrap argument that will be clear later.

We remark that, though a function $f\in\calV_1$ is not required to be differentiable, the one-sided derivatives $f_-'(x)$ and $f_+'(x)$ are both well defined at every point $x$ by the convexity of $f(\sqrt{x})$ in $x$. In what follows, we will abuse notation and simply use $f'(x)$ for $f'_-(x)$ and $f_+'(x)$ in both weak sense and strong sense. For example, when we write $f'(x)\leq C$, we mean both  $f'_-(x) \leq C$ and $f'_+(x) \leq C$ at the same time. In this context, the non-increasing property of $f$ on $[0,\infty)$ can be represented as $f'\leq0$ when $x\geq0$.

The reasons to choose those constraints in the function set $\calV_1$ are as follows. The assumption $f(0)=1$ is chosen by scaling invariance in \eqref{Equation: scaling invariant}. $f$ being nonnegative is a natural assumption in finding a simple formulation of the self-similar profile. It is also possible to search for self-similar profiles that change sign, but that would be more difficult to analyze. The monotonicity of $f$ ensures the kernel acts with a definite sign, prevents oscillations, makes many estimates easy. The square-root convexity, or convexity after reparameterization, is the key structural condition that makes the integral operator preserve shape, and it provides control of one-sided derivatives even without smoothness, which allows us to bootstrap regularity later. The properties of monotonicity of $f$ and convexity of $f(\sqrt{x})$ are also preserved by the map $\scrR_\alpha$, see proofs in Proposition \ref{Proposition:RaV1R2}. The assumption $f_-'(1/2)\leq -\eta$ is to give an upperbound of $f$ that rules out the constant-valued function $f\equiv1$. Moreover, this assumption, together with the lowerbound $f\geq\max(0,1-x^2)$, guarantees the normalization constant $c(f)$ in the map $\scrR_\alpha(f)$ does not degenerate. Finally, the weight $\rho(x) = (1+x)^{-\alpha}$ makes $\calV_1$ compact on an unbounded domain. 

One extra remark to justify the lowerbound. The main goal is to constrain the first- and second-order behavior of $f$ near the origin, i.e. $f(x)-1=O(x^2)$ as $x\to 0$. Moreover, we are searching for an expanding-type self-similar blow-up, which often corresponds to a compactly supported profile, and it is therefore natural to choose a compactly supported lowerbound.


\subsubsection{ Introducing maps $\scrT_\alpha(f)$ and $\scrR_\alpha(f)$. }
To simplify the expression for $\frac{v(x)}{x}$ in \eqref{Equation: fixedPointEqn}, we will separate some terms in the singular integral. We notice that when $\alpha\neq\frac12$, we can let $\gamma = 1-2\alpha$ and compute that 
\begin{align*}
& \frac{1}{\gamma} \int_{\xi>0} f(\xi) \sbracket{\frac{\xi}{x}( |x+\xi|^{\gamma} -  |x-\xi|^{\gamma}) -2\gamma \xi^{\gamma} } \idiff \xi\\
= &\frac{1}{\gamma}   \int_{\xi>0} f(\xi)\partial_{\xi} \Bigg[ \frac{(1+\gamma)\xi^2-x^2}{(1+\gamma)(2+\gamma)x}(|\xi+x|^\gamma - |\xi-x|^\gamma) \\
& \quad\quad\quad\quad\quad\quad\quad + \frac{\gamma \xi}{(1+\gamma)(2+\gamma)}(|\xi-x|^\gamma + |\xi+x|^\gamma) - \frac{2\gamma}{1+\gamma}\xi^{\gamma+2} \Bigg]  \idiff \xi \\
= & -\frac{1}{\gamma} \int_{\xi>0}f'(\xi)  \Bigg[\frac{(1+\gamma)\xi^2-x^2}{(1+\gamma)(2+\gamma)x}(|\xi+x|^\gamma - |\xi-x|^\gamma) \\
& \quad\quad\quad\quad\quad\quad\quad + \frac{\gamma \xi}{(1+\gamma)(2+\gamma)}(|\xi-x|^\gamma + |\xi+x|^\gamma) - \frac{2\gamma}{1+\gamma}\xi^{\gamma+2}  \Bigg] \idiff \xi\\
= & \int_{\xi>0} f'(\xi) \xi^{\gamma+1} F_{1,\gamma}(x/\xi) \, \idiff \xi \ .
\end{align*}
Meanwhile, when $\alpha=\frac12$, one can compute that 
 \begin{align*}
& \int_{\xi>0} f(\xi) \sbracket{ \frac{\xi}{x} \log\rbracket{\frac{|x+\xi|}{|x-\xi|}}-2 }\, \idiff \xi \\
= & \int_{\xi>0} f(\xi) \partial_{\xi}\sbracket{  -\xi + \frac{\xi^2-x^2}{2x}\log\abs{\frac{x+\xi}{x-\xi}} }\, \idiff \xi \\
= &  \int_{\xi>0} f'(\xi) \xi F_{1,0}(x/\xi)\, \idiff \xi \ ,
\end{align*}
where the definition of auxiliary functions $F_{1,\gamma}$ can be seen in Appendix \ref{Section: Auxiliary functions}. Thus we define $\alpha$-dependent nonlinear map 
\begin{equation}\label{Equation: OperatorTalpha} \scrT_\alpha(f)(x) = c_{1,\alpha} \int_{\xi>0} f'(\xi) \xi^{2-2\alpha} F_{1,1-2\alpha}(x/\xi)\, \idiff \xi \ ,\end{equation}
which gives the following equivalent expression for \eqref{Equation: fixedPointEqn}:
$$ f(x) = \max\rbracket{ 0, 1 + \frac{\scrT_\alpha(f)(x)}{v'(0)}  } \ .$$

This definition only uses the integral on $[0,\infty)$ since $f\in\calV_1\subseteq\calV_0$  is an even function. We will always employ this symmetry property in the sequel. One should note that $\scrT_\alpha$ is well-defined for any $f\in\calV_1$, since for each fixed $x$, the kernel of $\scrT_\alpha$ decays like $\xi^{-1-2\alpha}$ as $\xi\to\infty$.

From definition of $F_{1,1-2\alpha}$ in Lemma \ref{Lemma: AuxiliaryFunctionF1}, one can show that   $F_{1,1-2\alpha}'(1/t) = t^{1+2\alpha}F_{1,1-2\alpha}'(t)$, $F_{1,1-2\alpha}(0) = 0, F_{1,1-2\alpha}'(0) = 0$, $\lim_{x\to+\infty} F_{1,1-2\alpha}(x) =   \frac{1}{1-\alpha}$,  $F_{1,1-2\alpha}'(t) > 0$ on $t\in(0,\infty)$.

Then we know that 
\begin{equation*}
\begin{split}
& \scrT_\alpha(f)(0) = 0 \ , \\
&  - \lim_{x\to\infty} \scrT_\alpha(f)(x)= 2 c_{1,\alpha} \int_{\xi>0} (f(\xi)-f(+\infty))\xi^{1-2\alpha} \idiff \xi :=b(f)\ , \\
& \frac{v(x)}{x} = \scrT_\alpha(f)(x) + v'(0)  \ .
\end{split}
\end{equation*}

Let us define operator
\begin{equation}\label{Equation: OperatorRalpha} \scrR_\alpha(f)(x) := \max\rbracket{0, 1 + \frac{\scrT_\alpha(f)(x)}{c(f)} } \ , \end{equation}
where $$ c(f) :=  -\frac{1+2\alpha}{3} c_{1,\alpha} \int_{\xi>0} \xi^{-2\alpha}f'(\xi)\, \idiff \xi  = \frac{2\alpha(1+2\alpha)}{3} c_{1,\alpha} \int_{\xi>0} \frac{1-f(\xi)}{\xi^{1+2\alpha}}  \, \idiff \xi  \ . $$

Since $\scrT_\alpha(f)(0)=0$, we have $\scrR_\alpha(f)(0)=1$ in all cases. The ratio $b(f)/c(f)$ will be an important value that occurs frequently in what follows, as it determines the asymptotic behavior of $\scrR_\alpha(f)$:
\begin{equation}\label{Equation: AsymptoticBehavior}
\scrR_\alpha(+\infty) := \lim_{x\to\infty} \scrR_\alpha(f)(x) = \max\rbracket{0,1-\frac{b(f)}{c(f)}}.
\end{equation}
We note that $c(f)$ must be strictly positive and finite for any $f\in\calV_1$. Actually, in view of \eqref{Equation: fUpperLowerBound}, we have 
$$ c(f) = \frac{2\alpha(1+2\alpha)}{3} c_{1,\alpha} \int_{\xi>0} \frac{1-f(\xi)}{\xi^{1+2\alpha}}  \, \idiff \xi \geq  \frac{2^{4\alpha-2}\Gamma(\frac32+\alpha)}{3\sqrt{\pi}\Gamma(2-\alpha)}\eta \ .$$
The lower bound of $c(f)$ explains why we need to impose the condition $f'(\frac12)\leq -\eta$ on $\calV_1$: to make sure $c(f)$ is strictly positive so that $\scrR_\alpha(f):=\max\rbracket{0, 1 + \frac{\scrT_\alpha(f)(x)}{c(f)} }$ is finite for any $f\in\calV_1$.

We now aim to study the fixed-point problem $f=\scrR_\alpha(f), f\in\calV_1$. As the core idea of this paper, the following proposition explains how a fixed-point of $\scrR_\alpha$ is related to a solution to \eqref{Equation: SelfSimilarProfileOmega}.

\begin{proposition}\label{Proposition: relation1Dmodel2DSQGR2} For any $\alpha\in(0,1)$, if $f\in\calV_1$ is a fixed-point of $\scrR_\alpha$, i.e., $\scrR_\alpha(f)=f$, then $\lim_{x\to+\infty} f(x)=0$, $b(f)<\infty$, and $\Omega=-xf$ is a solution to \eqref{Equation: SelfSimilarProfileOmega} with
\begin{equation}\label{Equation: c_l} c_\omega = c_\ell  = c(f) - b(f) = \frac{2\alpha(1+2\alpha)}{3} \int_{\xi>0} \frac{1-f(\xi)}{\xi^{1+2\alpha}}  \, \idiff \xi  -2c_{1,\alpha}\int_{\xi>0} f(\xi) \xi^{1-2\alpha} \, \idiff \xi  \ .\end{equation} 
Moreover, let 
$ \lambda = \rbracket{-c_\ell (2-2\alpha)}^{\frac{1}{2-2\alpha}} $
then $(\Omega_\lambda(x),\widetilde{c_\ell },\widetilde{c_\omega}) = (\lambda^{-1}\Omega(\lambda x), -\frac{1}{2-2\alpha}$, $  -\frac{1}{2-2\alpha}) $ is a solution to \eqref{Equation: SelfSimilarProfileOmega} with $\widetilde{c_\omega} + (1 - 2\alpha) \widetilde{c_\ell } = -1$.

Conversely, if $\Omega=-xf$ is a solution to \eqref{Equation: SelfSimilarProfileOmega} such that $f$ is an even function of $x$, $f(x)\geq0$ and $f'(x)\leq0$ for $x\in[0,\infty)$, $f(0)=1$, $\lim_{x\to+\infty} f(x)=0$, and $\lim_{x\to0+}\frac{f'(x)}{2x}=-1$ (by re-normalization), then $f$ is a fixed-point of $\scrR_\alpha$, and $c_\ell $ is related to $f$ as in \eqref{Equation: c_l}.
\end{proposition}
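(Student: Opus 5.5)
The plan is to first pin down the asymptotics of a fixed point $f=\scrR_\alpha(f)$ in $\calV_1$, and then read off \eqref{Equation: SelfSimilarProfileOmega} by comparing $\scrR_\alpha(f)$ with the representation $\frac{v(x)}{x}=\scrT_\alpha(f)(x)+v'(0)$.

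\emph{Step 1: $f(+\infty)=0$.} Since $f$ is non-increasing and nonnegative on $[0,\infty)$, the limit $L:=f(+\infty)$ exists, and by \eqref{Equation: fUpperLowerBound} we have $L\le 1-\eta/4<1$. Suppose $L>0$. Then $b(f)=2c_{1,\alpha}\int_0^\infty (f-L)\xi^{1-2\alpha}\,d\xi$ is finite or $+\infty$. By \eqref{Equation: AsymptoticBehavior}, the fixed-point relation gives $L=\max(0,1-b(f)/c(f))$, so $b(f)=c(f)(1-L)$, which is finite and positive. I would derive a contradiction from the behaviour as $x\to\infty$ of $\scrT_\alpha(f)(x)+b(f)$. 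Using $F_{1,1-2\alpha}(t)\to \frac1{1-\alpha}$ and the symmetry $F'(1/t)=t^{1+2\alpha}F'(t)$, one expects $1+\scrT_\alpha(f)/c(f)$ to approach $L$ only at an algebraic rate, while $f-L$ must be integrable against $\xi^{1-2\alpha}$. A cleaner route is to observe that finiteness of $b(f)$ together with monotonicity forces $f-L=o(\xi^{-(2-2\alpha)})$. I would then compute the far-field expansion: integrating by parts back, $\scrT_\alpha(f)(x)+b(f)$ is dominated by a term $\sim x^{-2\alpha}\cdot(\text{const})\cdot\int (1-f)\ldots$ of definite sign, coming from $f'$ concentrated near the origin where $f\approx1-x^2$. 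This yields $f(x)-L\gtrsim x^{-2\alpha}$, contradicting integrability against $\xi^{1-2\alpha}$ since $1-2\alpha-2\alpha$ need not be $<-1$. Hence $L=0$, and then $b(f)\ge c(f)$, with $b(f)<\infty$ following from $f=0$ beyond the first zero of $1+\scrT_\alpha(f)/c(f)$ (compact support) or from the same decay argument. I expect this step, namely extracting $L=0$ and $b(f)<\infty$ from the kernel asymptotics, to be the main obstacle. The fallback is to show directly that $1+\scrT_\alpha(f)(x)/c(f)\to 1-b/c$ with a correction of sign forcing the $\max$ to vanish eventually.

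\emph{Step 2: identify $c_\ell$.} With $b(f)<\infty$, choose $c_\ell=c(f)-b(f)$ and define $v$ via \eqref{Equation: fixedPointEqn}. A direct computation of $v'(0)$ from the formula for $v'$, using $\lim_{x\to\infty}\scrT_\alpha=-b$ and the asymptotics of $v(x)/x$, gives $v'(0)=c_\ell+b(f)=c(f)$. The identity $\frac{v(x)}{x}=\scrT_\alpha(f)(x)+v'(0)$ then turns $f=\scrR_\alpha(f)$ into $f=\max(0,v/(xv'(0)))=g$. On $\{f>0\}$ we have $f=g$, so $f'/f=g'/g$; taking $c_\omega=c_\ell$ and multiplying back yields \eqref{Equation: SelfSimilarProfileOmega} for $\Omega=-xf$. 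On $\{f=0\}$ both sides vanish. The $H^1_{loc}$ regularity at the edge of the support comes from the Lipschitz bound on $f$ that $\calV_1$ provides.

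\emph{Step 3: rescaling.} Since $c_\ell=c_\omega$ and $c_\omega+(1-2\alpha)c_\ell=(2-2\alpha)c_\ell$, I need $c_\ell<0$, i.e.\ $b(f)>c(f)$. This should follow from $f(+\infty)=0$ together with the fact that $1-b/c\le 0$ must hold strictly, since otherwise $f$ would vanish only at infinity, contradicting the decay established in Step 1. Applying \eqref{Equation: scaling invariant} with $\gamma=\lambda^{-1}$ and the stated $\lambda$ then gives $\widetilde{c_\ell}=\lambda^{2\alpha-2}c_\ell=-\frac1{2-2\alpha}$.

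\emph{Converse.} Given such an $\Omega$, reverse Step 2. On the support, $f'/f=g'/g$ together with $f(0)=g(0)=1$ gives $f=g=\max(0,v/(xv'(0)))$. Expanding $v(x)/x$ near $0$ and using the normalization $f'(x)/(2x)\to-1$ identifies $v'(0)$ with $c(f)$, since the $x^2$-coefficient of $\scrT_\alpha(f)$ is $-c(f)$ by the choice of the constant $\frac{1+2\alpha}{3}$. Hence $f=\scrR_\alpha(f)$. Finally, letting $x\to\infty$ gives $c_\ell=c(f)-b(f)$.
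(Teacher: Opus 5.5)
Your Step 2, Step 3 (once $c_\ell<0$ is known) and the converse are fine and agree with the paper. Indeed $v'(0)=c_\ell+b(f)$, so choosing $c_\ell=c(f)-b(f)$ turns $f=\scrR_\alpha(f)$ into $f=\max(0,v/(xv'(0)))$. In the converse, the normalization $f'(x)/(2x)\to-1$ forces $v'(0)=c(f)$ via $\lim_{x\to0}\scrT_\alpha(f)'(x)/(-2x)=c(f)$.

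The gap is Step 1, and with it the strict inequality $b(f)>c(f)$ that Step 3 needs. You must rule out every fixed point that stays positive on all of $[0,\infty)$: both $L>0$ and the borderline case $L=0$, $b(f)=c(f)$. Your integrability mechanism does not do this for all $\alpha\in(0,1)$.
\begin{itemize}
\item The rate is wrong. From Lemma~\ref{Lemma: AuxiliaryFunctionF1} (integrate $F_{1,\gamma}'(1/t)=t^{2-\gamma}F_{1,\gamma}'(t)$) one gets $F_{1,1-2\alpha}(t)=\frac{1}{1-\alpha}-\frac23t^{-1-2\alpha}+O(t^{-3-2\alpha})$. So the near-origin part of $f'$ contributes only $\sim x^{-1-2\alpha}$ to $\scrT_\alpha(f)(x)+b(f)$, not $x^{-2\alpha}$.
\item The resulting bound $f-L\gtrsim x^{-1-2\alpha}$ contradicts $f-L=o(x^{-(2-2\alpha)})$ only for $\alpha\le\frac14$. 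Even your rate $x^{-2\alpha}$ would only cover $\alpha\le\frac12$; ``need not be $<-1$'' is not an argument.
\item The fallback cannot work either. Since $\scrT_\alpha(f)+b(f)>0$, the function $1+\scrT_\alpha(f)/c(f)$ decreases to $1-b(f)/c(f)$ from above. No sign consideration makes the truncation activate when $1-b(f)/c(f)\ge0$.
\end{itemize}

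The missing idea is to use the fixed-point identity self-consistently at the same point; this is how the paper proves Lemma~\ref{Lemma: CompactSupportfixPoint}. If $f(x)>0$, then $f'(x)=\scrT_\alpha(f)'(x)/c(f)$. Moreover $f'(x)<0$ strictly, because $f'(\xi)\le-2\eta\xi<0$ on $(0,\frac12)$ and $F_{1,1-2\alpha}'>0$. Square-root convexity gives $f'(\xi)\le\frac{\xi}{x}f'(x)$ for $\xi<x$. Discard the (nonpositive) part $\xi>x$ of $\scrT_\alpha(f)'(x)=c_{1,\alpha}\int_0^\infty f'(\xi)\xi^{1-2\alpha}F_{1,1-2\alpha}'(x/\xi)\,d\xi$. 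As in \eqref{Equation: Ra1}, this yields $|f'(x)|\ge c(f)^{-1}K\,x^{2-2\alpha}|f'(x)|$, with $K=c_{1,\alpha}\int_0^1t^{2-2\alpha}F_{1,1-2\alpha}'(1/t)\,dt>0$. Hence $f$ vanishes for $x>(c(f)/K)^{1/(2-2\alpha)}$.

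Compact support gives $f(+\infty)=0$ and $b(f)<\infty$ at once. Since $1+\scrT_\alpha(f)/c(f)$ is strictly decreasing and $\le0$ somewhere, its limit $1-b(f)/c(f)$ is strictly negative, i.e.\ $c_\ell<0$. This holds uniformly in $\alpha\in(0,1)$.

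If you prefer to keep a far-field argument, note that the useful contribution comes from $f'$ on $\xi>x$, not from the mass near the origin. The case $b(f)=\infty$ is immediate, since then $\scrT_\alpha(f)\to-\infty$ and the truncation activates. If $b(f)<\infty$ and $f>0$ everywhere, use $\frac1{1-\alpha}-F_{1,1-2\alpha}(x/\xi)\ge\frac1{1-\alpha}-F_{1,1-2\alpha}(1)>0$ for $\xi\ge x$. This gives $c(f)\,(f(x)-L)=\scrT_\alpha(f)(x)+b(f)\ge c_{1,\alpha}\bigl(\frac1{1-\alpha}-F_{1,1-2\alpha}(1)\bigr)x^{2-2\alpha}(f(x)-L)$, with $f(x)>L$. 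That is again impossible for large $x$.
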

\begin{proof} The first statement follows directly from the construction of $\scrR_\alpha$. The claims that $\lim_{x\to+\infty} f(x)=0$ and $b(f)<+\infty$ provided $f=\scrR_\alpha(f)$ will be proved in Lemma \ref{Lemma: CompactSupportfixPoint} below.  The formula for $c_\ell $ is obtained by comparing the definition of $\scrR_\alpha$ and the expression of $v'(0)$ in \eqref{Equation: fixedPointEqn}, and the formula of $c_\omega$ follows from \eqref{Equation: c_l}. 

Conversely, if $\Omega=-xf$ is a solution to equations \eqref{Equation: SelfSimilarProfileOmega} then we can rescale $\Omega$ by $\Omega_\lambda(x) = \lambda^{1-2\alpha}\Omega(\lambda x)$ so that $\lim_{x\to0+} \frac{f'(x)}{2x}=-1$. Notice that $$ \lim_{x\to 0} \frac{\scrT_\alpha(f)'(x)}{-2x} = -\frac{1+2\alpha}{3}\int_{\xi>0}\frac{f'(\xi)}{\xi^{2\alpha}} \, \idiff \xi = c_{1,\alpha} \frac{2\alpha(1+2\alpha)}{3} \int_{\xi>0} \frac{1-f(\xi)}{\xi^{1+2\alpha}} \, \idiff \xi \ . $$
Thus $\lim_{x\to0+} \frac{\scrR_\alpha f'(x)}{2x}=\lim_{x\to0+} \frac{f'(x)}{2x}=-1$ gives us 
\begin{align*}
c(f) & =  -\frac{1+2\alpha}{3} c_{1,\alpha} \int_{\xi>0} \xi^{-2\alpha}f'(\xi)\, \idiff \xi  = \frac{2\alpha(1+2\alpha)}{3} c_{1,\alpha} \int_{\xi>0} \frac{1-f(\xi)}{\xi^{1+2\alpha}}  \, \idiff \xi \ , \\
c_\ell  &= c(f) - b(f) = \frac{2\alpha(1+2\alpha)}{3} \int_{\xi>0} \frac{1-f(\xi)}{\xi^{1+2\alpha}}  \, \idiff \xi  -2c_{1,\alpha}\int_{\xi>0} f(\xi) \xi^{1-2\alpha} \, \idiff \xi \ .
\end{align*}
\end{proof}


\subsubsection{ Roadmap for proofs }
The remainder of this section is devoted to proving the existence of fixed points of $\scrR_\alpha$ on $\calV_1$. Before getting into the details, let us briefly explain the design of the set $\calV_1$ and the ideas behind the proof. In order to apply the Schauder fixed-point theorem, we want that 
\begin{enumerate}
\item $\calV_1$ is nonempty, convex, and closed in the underlying Banach topology;
\item $\calV_1$ is a compact set in this Banach space; 
\item $\scrR_\alpha$ maps $\calV_1$ continuously into itself in the same topology;
\end{enumerate} We note that (1) is automatically satisfied by the design of $\calV_1$. To establish (2) and (3), it is crucial to observe that the intermediate linear map $\scrT_\alpha$ preserves monotonicity and square-root-convexity on $[0,\infty)$, which can be proven by applying integration by parts to the formula of $\scrT_\alpha$. This monotonicity and convexity preserving property of $\scrT_\alpha$ then passes on to the non-linear map $\scrR_\alpha$  through some straightforward calculations of derivatives, which provide powerful controls on $\scrT_\alpha$. In particular, it implies the uniform estimates that $\max(0,1-x^2)\leq \scrR_\alpha(f)(x) \leq1$ and that $\scrR_\alpha(f)'(1/2)\leq-\eta$. Also, it is easy to see that $\scrR_\alpha(f)$ is even and $\scrR_\alpha(f)(0)=1$. Thus, the function set $\calV_1$ is closed under the map $\scrR_\alpha$. Moreover, the monotonicity and convexity properties lead to the continuity of $\scrR_\alpha:\calV_1\to\calV_1$ and the compactness of $\calV_1$ in the $L^\infty_\rho$-topology. Finally, with all these ingredients in hand, we can apply the Schauder fixed-point theorem on $\scrR_\alpha:\calV_1\to\calV_1$ to conclude the proof.


\subsubsection{Properties of $c(f)$} We start with a finer estimate of $c(f)$ that will be useful later.

\begin{lemma}\label{Lemma: c(f)} For any $\alpha\in(0,1)$,  any $f\in\calV_1$, and any $x>0$, we have $$c(f) \leq \frac{1+2\alpha}{3(1-\alpha)} c_{1,\alpha} (1-f(x))^{\min(1-\alpha,\alpha)} (1+x^{-2\alpha}) \ .$$
\end{lemma}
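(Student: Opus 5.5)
The plan is to work with the second expression for $c(f)$,
$$c(f)=\frac{2\alpha(1+2\alpha)}{3}c_{1,\alpha}\int_{\xi>0}\frac{1-f(\xi)}{\xi^{1+2\alpha}}\idiff\xi .$$
Fix $x>0$ and set $\delta:=1-f(x)\in[0,1]$. The idea is to bound $1-f(\xi)$ pointwise by an explicit piecewise function built from $\delta$, and then integrate that bound. The only facts about $f\in\calV_1$ I will use are these: $0\le 1-f\le 1$ on $[0,\infty)$; $1-f(\xi)\le\xi^2$, from the lower barrier $f\ge\max(0,1-x^2)$; $f$ is non-increasing; and the square-root convexity. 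For the last, write $h(s):=1-f(\sqrt s)$. Then $h$ is concave, non-decreasing and $h(0)=0$, so $h(s)/s$ is non-increasing in $s$.

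\textbf{Pointwise bounds.} Note first that $\delta\le x^2$ by the lower barrier.
\begin{itemize}
\item For $0<\xi\le x$, monotonicity together with the barrier gives $1-f(\xi)\le\min(\delta,\xi^2)$.
\item For $\xi\ge x$, the monotonicity of $h(s)/s$ gives $h(\xi^2)\le (\xi^2/x^2)\,h(x^2)$, so that $1-f(\xi)\le\min\!\left(1,\delta\xi^2/x^2\right)$.
\end{itemize}
The convexity input is needed only on the outer range, which is where the far field must be controlled by $\delta$.

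\textbf{Integration.} On $(0,x]$ I split at $\sqrt\delta$, which lies in $(0,x]$ because $\delta\le x^2$:
$$\int_0^{\sqrt\delta}\xi^{1-2\alpha}\idiff\xi+\int_{\sqrt\delta}^{x}\delta\,\xi^{-1-2\alpha}\idiff\xi\le\delta^{1-\alpha}\Big(\tfrac{1}{2-2\alpha}+\tfrac{1}{2\alpha}\Big)=\frac{\delta^{1-\alpha}}{2\alpha(1-\alpha)}.$$
On $[x,\infty)$ I split at $\xi_0:=x/\sqrt\delta\ge x$ (if $\delta=0$, then $f\equiv1$ on $[0,x]$ and the bound below shows $1-f\equiv0$ beyond $x$ too, so this piece vanishes):
$$\int_x^{\xi_0}\frac{\delta}{x^2}\xi^{1-2\alpha}\idiff\xi+\int_{\xi_0}^\infty\xi^{-1-2\alpha}\idiff\xi\le\frac{\delta\,\xi_0^{2-2\alpha}}{(2-2\alpha)x^2}+\frac{\xi_0^{-2\alpha}}{2\alpha}=\frac{\delta^{\alpha}x^{-2\alpha}}{2\alpha(1-\alpha)}.$$
Multiplying by $\frac{2\alpha(1+2\alpha)}{3}c_{1,\alpha}$ gives
$$c(f)\le\frac{1+2\alpha}{3(1-\alpha)}c_{1,\alpha}\big(\delta^{1-\alpha}+\delta^{\alpha}x^{-2\alpha}\big).$$
Since $\delta\le1$, both $\delta^{1-\alpha}$ and $\delta^\alpha$ are at most $\delta^{\min(\alpha,1-\alpha)}$, which yields the claim.

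\textbf{Main obstacle.} The only delicate step is the far-field bound $1-f(\xi)\le\delta\xi^2/x^2$ for $\xi\ge x$. Monotonicity alone gives no smallness there, and the barrier only gives $1-f(\xi)\le\xi^2$, which does not involve $\delta$. It is exactly the square-root convexity, through the monotonicity of $h(s)/s$, that transfers the smallness of $1-f$ at $x$ to the tail. The choice of the crossover point $\xi_0=x/\sqrt\delta$ is then what produces the $\delta^\alpha x^{-2\alpha}$ term.
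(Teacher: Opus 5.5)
Your proposal is correct and is essentially the paper's proof. It uses the same two pointwise bounds: $1-f(\xi)\le\min(\xi^2,\delta)$ for $\xi\le x$, from the barrier and monotonicity, and $1-f(\xi)\le\min(1,\delta\xi^2/x^2)$ for $\xi\ge x$, from square-root convexity. Integrating them gives exactly $\frac{1+2\alpha}{3(1-\alpha)}c_{1,\alpha}\bigl(\delta^{1-\alpha}+\delta^{\alpha}x^{-2\alpha}\bigr)$, and the only cosmetic difference is that the paper adds the two bounds into a single majorant valid for all $\xi>0$, while you integrate each on its own range.
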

\begin{proof}
Let us fix $x>0$. Notice that if $\xi<x$, we have $f(\xi)\geq \max(1-\xi^2, f(x))$, and thus $1-f(\xi)\leq \min(\xi^2, 1-f(x))$. Meanwhile, if $\xi>x$, then convexity of $f(\sqrt{\cdot})$ implies that $\frac{1-f(\xi)}{\xi^2}\leq \frac{1-f(x)}{x^2}$, and thus $1-f(\xi)\leq \min( 1, \xi^2 x^{-2}(1-f(x)) )$. Combining both cases together gives us
 $$1-f(\xi) \leq \min(\xi^2, 1-f(x)) + \min( \xi^2 x^{-2} (1-f(x)) ,1) \ ,$$ so we can compute that
\begin{align*}
c(f) & \leq \frac{2\alpha(1+2\alpha)}{3} c_{1,\alpha} \int_{\xi>0} \frac{\min(\xi^2, 1-f(x))}{\xi^{1+2\alpha}} + \frac{\min(\xi^2 x^{-2}(1-f(x)),1)}{\xi^{1+2\alpha}}\, \idiff \xi \\
& = \frac{1+2\alpha}{3(1-\alpha)} c_{1,\alpha} \sbracket{(1-f(x))^{1-\alpha} + \rbracket{\frac{1-f(x)}{x^2}}^\alpha } \\
& \leq \frac{1+2\alpha}{3(1-\alpha)} c_{1,\alpha} (1-f(x))^{\min(1-\alpha,\alpha)}(1+x^{-2\alpha}) \ .
\end{align*}
\end{proof}

\begin{lemma}\label{Lemma: cHolderContinuity} $c(\cdot): \calV_1 \to\bbR $ is H\"older continuous with exponent $1-\alpha$ in the $L^\infty_\rho$-norm. In particular, we have
$$ \abs{ c(f_1) - c(f_2) } \leq C \norm{\rho(f_1 - f_2)}_{L^\infty}^{1-\alpha}   \ ,$$
for any $f_1, f_2 \in\calV_1$.
\end{lemma}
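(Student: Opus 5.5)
We work directly from the integral representation
\[
c(f)=\frac{2\alpha(1+2\alpha)}{3}c_{1,\alpha}\int_{\xi>0}\frac{1-f(\xi)}{\xi^{1+2\alpha}}\,\diff\xi ,
\]
which gives
\[
\abs{c(f_1)-c(f_2)}\le C\int_{\xi>0}\frac{\abs{f_1(\xi)-f_2(\xi)}}{\xi^{1+2\alpha}}\,\diff\xi .
\]
Set $\delta:=\norm{\rho(f_1-f_2)}_{L^\infty}$, so that $\abs{f_1-f_2}(\xi)\le \delta(1+\xi)^{\alpha}$. The plan is to split the $\xi$-integral at a threshold $r\in(0,1]$ that depends on $\delta$, and to use a different bound on each piece.

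\emph{Near the origin} ($\xi<r$), the weighted sup bound alone is useless, since $\delta\xi^{-1-2\alpha}$ is not integrable at $0$. Instead we use membership in $\calV_1$. For $j=1,2$ we have $1-\xi^2\le f_j(\xi)\le 1$, hence $\abs{f_1-f_2}(\xi)\le \xi^2$. The near part is therefore bounded by $\int_0^r \xi^{1-2\alpha}\,\diff\xi = r^{2-2\alpha}/(2-2\alpha)$.

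\emph{Away from the origin} ($\xi>r$), we use $\abs{f_1-f_2}\le\delta(1+\xi)^\alpha$. The far part is then bounded by
\[
\delta\int_r^\infty (1+\xi)^{\alpha}\xi^{-1-2\alpha}\,\diff\xi .
\]
This integral converges at infinity because the integrand decays like $\xi^{-1-\alpha}$. It is at most $C\delta(1+r^{-2\alpha})\le C\delta r^{-2\alpha}$ for $r\le 1$; here the piece on $[r,1]$ contributes $\lesssim r^{-2\alpha}$ and the tail is $O(1)$.

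Balancing the two pieces, we choose $r^{2-2\alpha}\sim\delta r^{-2\alpha}$, i.e.\ $r=\min(1,\delta^{1/2})$. When $\delta\le1$ both pieces are then $\lesssim \delta^{1-\alpha}$, which is the claimed bound. When $\delta>1$, we use instead that $0\le c(f_j)\le C$ uniformly on $\calV_1$: since $1-f\le\min(\xi^2,1)$, the integral defining $c$ is bounded by a constant depending only on $\alpha$. Hence $\abs{c(f_1)-c(f_2)}\le C\le C\delta^{1-\alpha}$ in this case as well.

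The only genuinely delicate point is the non-integrable singularity at $\xi=0$, which the weighted sup norm cannot control by itself. It is handled by the uniform quadratic envelope $\max(0,1-x^2)\le f\le 1$ built into $\calV_1$, and this is exactly where the exponent $1-\alpha$ (rather than Lipschitz) comes from. Everything else consists of routine power-integral estimates.
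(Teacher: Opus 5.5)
Your proof is correct and is essentially the paper's: the same pointwise bound $\abs{f_1-f_2}(\xi)\le\min\bigl(\xi^2,\delta(1+\xi)^\alpha\bigr)$ coming from the envelope $\max(0,1-x^2)\le f\le 1$, and the same split of the integral at $\xi=\sqrt{\delta}$. Your separate treatment of $\delta>1$ is harmless but vacuous, since $0\le f_j\le 1$ and $\rho\le 1$ already force $\delta\le 1$, which the paper takes for granted.
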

\begin{proof} We recall that $\rho(x) = (1+x)^{-\alpha}$. Denote $\delta = \norm{\rho (f_1-f_2)}_{L^\infty}\leq1$. Since $f_1(x),f_2(x)\geq \max(0,1-x^2)$, we have 
$$ \abs{f_1(x) - f_2(x)}\leq \min(x^2, \delta(1+|x|)^\alpha) \ .$$
Hence, we obtain
\begin{align*} 
& \abs{c(f_1) - c(f_2) } \leq \frac{2\alpha(1+2\alpha)}{3}c_{1,\alpha}\int_{\xi>0} \frac{ \abs{f_1(\xi) - f_2(\xi)} }{\xi^{1+2\alpha}}\idiff\xi \\
\leq & \frac{2\alpha(1+2\alpha)}{3} c_{1,\alpha} \rbracket{ \int_{\xi<\sqrt{\delta}} \xi^{1-2\alpha}\, \idiff \xi + \int_{\xi>\sqrt{\delta}} \frac{\delta(1+|\xi|)^\alpha}{\xi^{1+2\alpha}}\, \idiff \xi  } \\
\leq & \frac{(1+2\alpha)(3-2\alpha)}{3(1-\alpha)}c_{1,\alpha} \delta^{1-\alpha} \ .
\end{align*}
\end{proof}


\subsubsection{Properties of $\scrT_\alpha$ and $\scrR_\alpha$ } We now turn to study the intermediate maps $\scrT_\alpha$ and $\scrR_\alpha$. As an important observation in our fixed-point method, they preserve the monotonicity and convexity of functions in $\calV_1$.
\begin{proposition}\label{Proposition:RaV1R2} For any $\alpha\in(0,1)$, operator $\scrR_\alpha$ maps $\calV_1$ to itself.
\end{proposition}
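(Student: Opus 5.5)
We must verify, for each $f\in\calV_1$, each defining property of $\calV_1$ for $g:=\scrR_\alpha(f)=\max(0,1+\scrT_\alpha(f)/c(f))$. The trivial ones come first. The evenness of $g$ is by construction. We have $g(0)=1$ since $\scrT_\alpha(f)(0)=0$. Nonnegativity comes from the $\max$. Continuity follows from continuity of $\scrT_\alpha(f)$, since the kernel $\xi^{2-2\alpha}F_{1,1-2\alpha}(x/\xi)$ is continuous in $x$ and decays like $\xi^{-1-2\alpha}$. Finiteness of $\|\rho g\|_{L^\infty}$ will follow once $g\le 1$. Since $c(f)>0$ by the lower bound already recorded, everything reduces to sign/shape properties of the linear map $\scrT_\alpha(f)$ on $[0,\infty)$.

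\emph{Monotonicity.} Differentiate under the integral:
\[
\scrT_\alpha(f)'(x)=c_{1,\alpha}\int_{\xi>0}f'(\xi)\,\xi^{1-2\alpha}F_{1,1-2\alpha}'(x/\xi)\,\idiff\xi .
\]
Here $f'\le 0$ and $F'_{1,1-2\alpha}>0$, so this is $\le 0$. Hence $\scrT_\alpha(f)$ is non-increasing and $\le 0$ on $[0,\infty)$, and therefore $g$ is non-increasing and $g\le 1$.

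\emph{Square-root convexity.} Set $x=\sqrt s$. We need $\frac{d}{ds}\scrT_\alpha(f)(\sqrt s)=\frac{1}{2x}\scrT_\alpha(f)'(x)$ to be non-decreasing in $x$. Substituting $\xi=x/t$ gives
\[
\frac{\scrT_\alpha(f)'(x)}{2x}=\frac{c_{1,\alpha}}{2}\int_0^\infty \frac{f'(x/t)}{x/t}\,\Big(\frac{x}{t}\Big)^{2-2\alpha}\frac{F'_{1,1-2\alpha}(t)}{x}\cdots
\]
The cleanest route is to write $f'(\xi)=2\xi\,h(\xi^2)$, where $h=\frac{d}{ds}f(\sqrt s)\le 0$ is non-decreasing. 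This rewrites the expression as an integral of $h(x^2/t^2)$ against a positive kernel $K(t)$ times a power of $x$; one checks the power is exactly $x^0$ by homogeneity, since $\scrT_\alpha(f)$ has degree $2-2\alpha$ in $\xi$ relative to $f'$. Monotonicity of $h$ then gives monotonicity in $x$. Composing with $y\mapsto\max(0,1+y/c)$ preserves convexity in $s$, because $\max(0,\cdot)$ of an affine image of a convex function is convex. If the homogeneity does not come out exactly, I would instead integrate by parts once more, as in the derivation of $F_{1,\gamma}$, to express $\frac{d^2}{ds^2}\scrT_\alpha(f)(\sqrt s)$ against the measure $dh$ with a kernel whose positivity is a property of the auxiliary function from the appendix (the analogue of $F_{1,1-2\alpha}'>0$).

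\emph{Barriers, the main obstacle.} For the lower bound $g\ge\max(0,1-x^2)$ we need $\scrT_\alpha(f)(x)\ge -c(f)x^2$. By square-root convexity just shown, $\scrT_\alpha(f)(\sqrt s)$ is convex in $s$, so it lies above its tangent at $s=0$. Its slope there is $\lim_{x\to0}\scrT_\alpha(f)'(x)/(2x)=-c(f)$, which is exactly how $c(f)$ was normalized (the computation in Proposition~\ref{Proposition: relation1Dmodel2DSQGR2}). This gives the bound.

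For the upper condition $g'_-(1/2)\le-\eta$, convexity in $s$ makes the slope $\frac{d}{ds}g(\sqrt s)$ at $s=1/4$ at most the secant slope from $0$ to $1/4$. So it suffices to show $\scrT_\alpha(f)(1/2)\le -\kappa\,c(f)$ for some explicit $\kappa$ with $4\kappa\ge\eta$, while also ensuring $g(1/2)>0$. To bound $-\scrT_\alpha(f)(1/2)$ from below, I would restrict the integral to a region where $F_{1,1-2\alpha}(1/(2\xi))$ is bounded below. That yields a lower bound by a multiple of $1-f$ evaluated at some point. Lemma~\ref{Lemma: c(f)} converts $c(f)$ into a quantity of order $(1-f(x))^{\min(\alpha,1-\alpha)}$, and the inequality closes because $\eta$ is small. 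The specific exponent $1/\min(\alpha,1-\alpha)$ and the constants $(3-2\alpha)(5-2\alpha)(1+4^\alpha)$ in $\eta$ indicate this bootstrap. Getting the explicit constants to match is the delicate, computation-heavy step. I expect to use the lower bound on $F_{1,1-2\alpha}$ from the appendix and the estimate $1-f(\xi)\ge \eta\min(\xi^2,1/4)$ from \eqref{Equation: fUpperLowerBound}.
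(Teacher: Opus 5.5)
\paragraph{The derivative barrier.} This step has a genuine gap. Set $\psi(s):=g(\sqrt s)$, which is convex with $\psi(0)=1$ and $\psi'_-(\tfrac14)=g'_-(\tfrac12)$. Convexity gives
\[
g'_-(\tfrac12)=\psi'_-(\tfrac14)\;\ge\;\frac{\psi(\tfrac14)-\psi(0)}{1/4}=-4\bigl(1-g(\tfrac12)\bigr).
\]
The slope at the right end of $[0,\tfrac14]$ is therefore \emph{at least} the secant slope, not at most. An estimate $\scrT_\alpha(f)(\tfrac12)\le-\kappa\,c(f)$, i.e.\ $g(\tfrac12)\le 1-\kappa$, only bounds $|g'(\tfrac12)|$ from above. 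A $g(\sqrt s)$ that drops quickly for small $s$ and is nearly flat near $s=\tfrac14$ is consistent with everything your plan would prove, so no choice of $\kappa$ rescues it.

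What is needed is a pointwise lower bound on $|g'|$ itself, and this is what the paper does. The integrand of
\[
g'(x)=\frac{c_{1,\alpha}}{c(f)}\int_0^\infty f'(\xi)\,\xi^{1-2\alpha}F_{1,1-2\alpha}'(x/\xi)\idiff\xi
\]
is nonpositive, so one may discard $\xi>x$. On $(0,x)$, monotonicity of $f'(\xi)/\xi$ (square-root convexity of $f$) gives $f'(\xi)\le\frac{\xi}{x}f'(x)$. At $x=\tfrac12$ this yields
\[
g'(\tfrac12)\le\frac{c_{1,\alpha}}{c(f)}\,\frac{1+2\alpha}{2(1-\alpha)(3-2\alpha)(5-2\alpha)}\,f'(\tfrac12).
\]
Then $c(f)$ is bounded above via Lemma~\ref{Lemma: c(f)} at $x=\tfrac12$; this is where $(3-2\alpha)(5-2\alpha)(1+4^\alpha)$ and the exponent $1/\min(\alpha,1-\alpha)$ in $\eta$ come from.

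Be aware that the paper closes this by asserting $f(\tfrac12)-1\ge\tfrac12 f'(\tfrac12)\ge-\eta/2$ ``from convexity''. That fails on $\calV_1$: for example, $f=\max(0,1-x^2)\in\calV_1$ has $1-f(\tfrac12)=\tfrac14>\eta/2$. So the regime where $1-f(\tfrac12)$ is not small needs an additional argument, for instance one using the actual size of $|f'|$ on $(0,\tfrac12)$ rather than the bound $2\xi|f'(\tfrac12)|$.

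\paragraph{Square-root convexity.} Your primary route does not work as stated. Substituting $f'(\xi)=2\xi h(\xi^2)$ and $\xi=xs$ gives
\[
\frac{\scrT_\alpha(f)'(x)}{2x}=c_{1,\alpha}\,x^{2-2\alpha}\int_0^\infty h(x^2s^2)\,s^{2-2\alpha}F_{1,1-2\alpha}'(1/s)\idiff s .
\]
The power is $x^{2-2\alpha}$, not $x^0$, because $\scrT_\alpha$ is not dilation invariant: $\scrT_\alpha(f(\lambda\cdot))(x)=\lambda^{2\alpha-2}\scrT_\alpha(f)(\lambda x)$. A nonpositive nondecreasing function times the increasing factor $x^{2-2\alpha}$ need not be nondecreasing.

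Your fallback is what is actually required, and it is exactly the paper's argument. With $\gamma=1-2\alpha$:
\begin{enumerate}
\item Split off the $x$-independent term $\frac{2(2-\gamma)}{3}c_{1,\alpha}\int_0^\infty f'(\xi)\xi^{\gamma-1}\idiff\xi$.
\item The remaining kernel is then $-\partial_\xi\bigl(\xi^{1+\gamma}F_{2,\gamma}(x/\xi)\bigr)$, with $F_{2,\gamma}$ from Lemma~\ref{Lemma: Auxiliary Function2}.
\item Integrate by parts to get
\[
\bigl(\scrT_\alpha(f)'(x)/x\bigr)'=c_{1,\alpha}\int_0^\infty \bigl(f'(\xi)/\xi\bigr)'\,\xi^{\gamma}F_{2,\gamma}'(x/\xi)\idiff\xi\ge0,
\]
using $F_{2,\gamma}'>0$.
\end{enumerate}

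\paragraph{The remaining parts.} Evenness, $g(0)=1$, monotonicity via $F_{1,1-2\alpha}'>0$, and the lower barrier from the tangent at $s=0$ with slope $-c(f)$ all agree with the paper. Your remark that $\max(0,\cdot)$ preserves convexity is a cleaner way to pass from $\scrT_\alpha$ to $\scrR_\alpha$ than differentiating $\scrR_\alpha(f)$ directly.
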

\begin{proof}
We can directly compute that
\begin{align*}
&\scrT_\alpha(f)(0) = 0, \quad \quad \scrR_\alpha(f)(0) = 1 \ , \\
& \scrR_\alpha(f)(x) = \max\rbracket{0, 1 + \frac{\scrT_\alpha(f)(x)}{c(f)}  }\geq 0 \ , \\
&\scrR_\alpha(f)'(x)  = \frac{c_{1,\alpha}}{c(f)} \int_0^\infty f'(\xi) \xi^{1-2\alpha} F_{1,1-2\alpha}'(x/\xi)\, \idiff \xi \leq 0 \ .
\end{align*}
Notice that the convexity of $f(\sqrt{\cdot})$ is equivalent to $(f'(x)/x)'\geq0$ on $x\in(0,\infty)$, and then we are going to show that $({\scrR_\alpha(f)'(x)}/x)' = c(f)^{-1}({\scrT_\alpha(f)'(x)}/x)'\geq 0$. Let $\gamma=1-2\alpha$, we have
\begin{align*}
& \frac{\scrT_\alpha(f)'(x)}{x} = c_{1,\alpha} \int_{\xi>0} \frac{f'(\xi)}{\xi} \frac{\xi^{1+\gamma}}{x} F_{1,\gamma}'(x/\xi)\, \idiff \xi \\
= & c_{1,\alpha} \int_{\xi>0} \frac{f'(\xi)}{\xi} \partial_{\xi}\rbracket{\xi^{1+\gamma}F_{2,\gamma}(x/\xi)} \, \idiff \xi + \frac{2(2-\gamma)}{3} c_{1,\alpha}\int_{\xi>0} \frac{f'(\xi)}{\xi^{1-\gamma}} \idiff\xi\\
= & c_{1,\alpha} \int_{\xi>0} \frac{f'(\xi)}{\xi} \rbracket{ \frac{\xi^{1+\gamma}}{x} F_{1,\gamma}'(x/\xi) - \frac{2(2-\gamma)\xi^\gamma}{3} }\, \idiff \xi + \frac{2(2-\gamma)}{3} c_{1,\alpha}\int_{\xi>0} \frac{f'(\xi)}{\xi^{1-\gamma}} \idiff\xi\\
= & c_{1,\alpha} \int_{\xi>0} \rbracket{\frac{f'(\xi)}{\xi} }' \xi^{1+\gamma} F_{2,\gamma}(x/\xi)\, \idiff \xi + \frac{2(2-\gamma)}{3} c_{1,\alpha}\int_{\xi>0} \frac{f'(\xi)}{\xi^{1-\gamma}}  \idiff\xi\ .
\end{align*}
Therefore, we get
$$
 \rbracket{\frac{\scrT_\alpha(f)'(x)}{x}}' =  c_{1,\alpha} \int_{\xi>0} \rbracket{\frac{f'(\xi)}{\xi} }' \xi^{\gamma} F_{2,\gamma}' (x/\xi)\, \idiff \xi \geq 0 
\ . $$

Notice that $\lim_{x\to0} \frac{\scrR_\alpha(f)'(x)}{2x} =-1 $ and convexity of $\scrR_\alpha(f)(\sqrt{x})$ shows that $\scrR_\alpha(f)(x)\geq \max(0, 1-x^2).$ Moreover, when $x\geq\frac12$, we can compute that 
\begin{equation}\label{Equation: Ra1}\begin{split}
\scrR_\alpha(f)'(x) & = - \frac{c_{1,\alpha}}{c(f)} \int_0^\infty f'(\xi) \xi^{1-2\alpha} F_{1,1-2\alpha}'(x/\xi)\, \idiff \xi \\
& \leq -  \frac{c_{1,\alpha}}{c(f)} \int_0^x f'(\xi) \xi^{1-2\alpha} F_{1,1-2\alpha}'(x/\xi)\, \idiff \xi \\
& \leq -  \frac{c_{1,\alpha}}{c(f)} \int_0^x \frac{f'(x)}{x} \xi^{2-2\alpha} F_{1,1-2\alpha}'(x/\xi)\, \idiff \xi \\
& \leq -  \frac{c_{1,\alpha} }{c(f)} x^{2-2\alpha} f'(x) \rbracket{ \int_0^1 t^{2-2\alpha} F_{1,1-2\alpha}'(1/t) \idiff t } \\
& =  \frac{c_{1,\alpha} }{c(f)} x^{2-2\alpha} f'(x)  \frac{2^{1-2\alpha}(1+2\alpha)}{(1-\alpha)(3-2\alpha)(5-2\alpha)} \ .
\end{split}\end{equation}

We recall from the convexity of $f(\sqrt{x})$ implies that $f(\frac12)-1\geq \frac12f'(\frac12)\geq -\frac{\eta}{2}$. This together with Lemma \ref{Lemma: c(f)} implies that $$c(f) \leq \frac{1+2\alpha}{3(1-\alpha)} c_{1,\alpha} \rbracket{\frac{\eta}{2}}^{\min(1-\alpha,\alpha)} (1+2^{2\alpha}) \ , $$
and hence we conclude that
$$ \frac{\abs{\scrR_\alpha(f)'(\frac12)}}{\abs{f'(\frac12)}}\geq \frac{3}{2(3-2\alpha)(5-2\alpha)(1+4^\alpha) (\eta/2)^{\min(1-\alpha,\alpha)}} = 1 \ .$$
\end{proof}

Next, we show that $\scrR_\alpha$ is continuous on $\calV_1$ in the $L^\infty_\rho$-topology.
\begin{proposition}\label{Proposition: ContinuityMapRa} For any $\alpha\in(0,1),\scrR_\alpha: \calV_1\to\calV_1$ is continuous with respect to the $L^\infty_\rho$-norm.
\end{proposition}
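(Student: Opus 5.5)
Since $\scrR_\alpha(f)=\max(0,1+\scrT_\alpha(f)/c(f))$ and $t\mapsto\max(0,t)$ is $1$-Lipschitz, it suffices to show that $f\mapsto \scrT_\alpha(f)(x)/c(f)$ is continuous from $(\calV_1,L^\infty_\rho)$ into $L^\infty_\rho$.

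\emph{Reduction.} Lemma \ref{Lemma: cHolderContinuity} shows that $c$ is H\"older continuous. The estimate following \eqref{Equation: OperatorRalpha} shows that $c(f)\geq c_*>0$ uniformly on $\calV_1$. Writing
\[
\frac{\scrT_\alpha(f_1)}{c(f_1)}-\frac{\scrT_\alpha(f_2)}{c(f_2)} = \frac{\scrT_\alpha(f_1)-\scrT_\alpha(f_2)}{c(f_1)} + \scrT_\alpha(f_2)\,\frac{c(f_2)-c(f_1)}{c(f_1)c(f_2)},
\]
it is then enough to prove two things:
\begin{enumerate}
\item a uniform bound $\norm{\rho\,\scrT_\alpha(f)}_{L^\infty}\leq C$ on $\calV_1$;
\item $\norm{\rho(\scrT_\alpha(f_1)-\scrT_\alpha(f_2))}_{L^\infty}\to0$ as $\delta:=\norm{\rho(f_1-f_2)}_{L^\infty}\to0$.
\end{enumerate}

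\emph{Working formula for $\scrT_\alpha$.} For both points I would not use the $f'$-form \eqref{Equation: OperatorTalpha}, because $f'$ is not controlled by the norm. Instead I undo the integration by parts and use
\[
\scrT_\alpha(f)(x)=-c_{1,\alpha}\int_{\xi>0}(f(\xi)-1)\,\partial_\xi\bigl[\xi^{2-2\alpha}F_{1,1-2\alpha}(x/\xi)\bigr]\idiff\xi=:\int_{\xi>0}(1-f(\xi))K(x,\xi)\idiff\xi.
\]
The boundary terms vanish: near $\xi=0$ because $1-f\leq\xi^2$, and near $\xi=\infty$ because $F_{1,1-2\alpha}(x/\xi)=O((x/\xi)^2)$. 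The last claim comes from $F_{1,1-2\alpha}(0)=F_{1,1-2\alpha}'(0)=0$. Up to the constant, this is the original expression $\tfrac{v}{x}-v'(0)$, whose kernel is $\tfrac{\xi}{x}(|x+\xi|^{\gamma}-|x-\xi|^{\gamma})/\gamma-2\xi^{\gamma}$ with $\gamma=1-2\alpha$ (log version at $\alpha=\frac12$).

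\emph{Kernel bounds.} From this explicit kernel I would verify three bounds:
\begin{itemize}
\item $|K(x,\xi)|\lesssim \xi^{1-2\alpha}$ for $\xi\lesssim x$, modulo an integrable singularity $|x-\xi|^{1-2\alpha}$ or $\log$ at $\xi=x$ when $\alpha>\frac12$, which is still locally integrable since $1-2\alpha>-1$;
\item $|K(x,\xi)|\lesssim x^2\xi^{-1-2\alpha}$ for $\xi\geq2x$, by Taylor expansion;
\item $K(x,\cdot)$ is integrable near $0$ against $\xi^2$.
\end{itemize}

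\emph{Uniform bound.} Using $0\leq1-f\leq\min(1,\xi^2)$ (from \eqref{Equation: fUpperLowerBound}) and splitting at $\xi=2x$, I expect
\[
|\scrT_\alpha(f)(x)|\lesssim \int_0^{2x}\min(1,\xi^2)\,\xi^{-2\alpha}\idiff\xi+x^2\int_{2x}^\infty\min(1,\xi^2)\,\xi^{-1-2\alpha}\idiff\xi\lesssim 1+x^{2-2\alpha}.
\]
This bound is too large against the weight $\rho=(1+x)^{-\alpha}$, so the naive estimate is not enough. I would instead exploit the monotone structure: $\scrT_\alpha(f)$ is non-increasing in $x$ and bounded below by $-b(f)$, which may be infinite.

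The main obstacle is that $\scrT_\alpha(f)$ itself need not be bounded. I would therefore bypass it by working directly with $\scrR_\alpha(f)\in[0,1]$.

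\emph{Truncated continuity argument.} For $\varepsilon>0$, choose $M$ with $\rho(M)\leq\varepsilon$. Then $\norm{\rho(\scrR_\alpha f_1-\scrR_\alpha f_2)}_{L^\infty(|x|>M)}\leq\varepsilon$ automatically, since both functions lie in $[0,1]$. On $[0,M]$ the local kernel bounds give
\[
|\scrT_\alpha(f_1)(x)-\scrT_\alpha(f_2)(x)|\leq \int |f_1-f_2|\,|K(x,\xi)|\idiff\xi,
\]
and I split this integral into three regions:
\begin{itemize}
\item on $\xi<\sqrt\delta$, use $|f_1-f_2|\leq\xi^2$, which contributes $O(\delta^{1-\alpha})$;
\item on $\sqrt\delta<\xi<R$, use $|f_1-f_2|\leq\delta(1+R)^\alpha$, which contributes $O_{M,R}(\delta)$ (adjusting near $\xi=0$ as in Lemma \ref{Lemma: cHolderContinuity});
\item on $\xi>R\geq2M$, use $|f_1-f_2|\leq1$, which contributes $\lesssim M^2R^{-2\alpha}$.
\end{itemize}
Choosing $R$ large and then $\delta$ small makes $\sup_{[0,M]}|\scrT_\alpha(f_1)-\scrT_\alpha(f_2)|$ small. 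By the uniform bound $|\scrT_\alpha(f)|\lesssim 1+M^{2-2\alpha}$ on $[0,M]$ and the H\"older continuity of $c$, the ratios $\scrT_\alpha/c$ are then close on $[0,M]$.

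Combining the region $|x|>M$ with the region $[0,M]$ yields $\norm{\rho(\scrR_\alpha f_1-\scrR_\alpha f_2)}_{L^\infty}\leq2\varepsilon$ for $\delta$ small enough. This proves continuity, not necessarily uniform H\"older continuity. The step needing most care is the kernel bound near the diagonal $\xi=x$ for $\alpha\geq\frac12$.
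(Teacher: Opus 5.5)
Your final truncated argument is correct and is essentially the paper's proof. The tail $|x|>M$ is controlled by the decay of $\rho$: the paper picks $M$ from $g_0=\scrR_\alpha(f_0)$ and uses monotonicity, while your use of $0\le\scrR_\alpha(f)\le 1$ with $\rho(M)\le\varepsilon$ is slightly simpler. On $[0,M]$ both proofs undo the integration by parts in $\scrT_\alpha$ and bound $|\scrT_\alpha(f_1)-\scrT_\alpha(f_2)|$ by $\norm{\rho(f_1-f_2)}_{L^\infty}$; the paper does this with one weighted estimate, you with a three-region split. Both then combine this with the uniform lower bound and H\"older continuity of $c$.

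You were right to drop the opening reduction to global continuity of $\scrT_\alpha/c$ in $L^\infty_\rho$: it fails, since $\scrT_\alpha(f)$ can grow almost like $x^{2-2\alpha}$. Your bound $|\scrT_\alpha(f)(x)|\lesssim 1+x^{2-2\alpha}$ is the correct one to use on $[0,M]$. The first integral in the display for that bound should carry $\xi^{1-2\alpha}$ rather than $\xi^{-2\alpha}$.
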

\begin{proof}
We recall that $\rho(x)=(1+|x|)^{-\alpha}$. Given any (fixed) $f_0\in\calV_1$, we only need to prove that $\scrR_\alpha$ is $L^\infty_\rho$-continuous at $f_0$. Denote $g_0:=\scrR_\alpha(f_0)$. Let $\varepsilon>0$ be an arbitrarily small number. Since $\scrR_\alpha(f_0)$ is bounded, continuous, and non-increasing on $[0,\infty)$, there is some $M>1$ such that
$$ \rho(M) g_0(M) = \varepsilon \ , $$
and this also means that $\rho(x)g_0(x) \leq \varepsilon$ for all $x\geq M$.
Let $f\in\calV_1$ be arbitrary, and similarly denote $g = \scrR_\alpha(f)$. Suppose that $\norm{\rho(f-f_0)}_{L^\infty}\leq \delta$ for some sufficiently small $\delta>0$. For any $x>0$, we have
\begin{align*}
 & \abs{ \scrT_\alpha(f)(x) - \scrT_\alpha(f_0)(x)  }  = c_{1,\alpha} \abs{\int_{\xi>0} (f(\xi) - f_0(\xi)) \partial_\xi(\xi^{2-2\alpha} F_{1,1-2\alpha}(x/\xi))\, \idiff \xi  } \\
 = & c_{1,\alpha} \abs{\int_{\xi>0} (f(\xi) - f_0(\xi)) ((2-2\alpha)\xi^{1-2\alpha} F_{1,1-2\alpha}(x/\xi) - \xi^{-2\alpha} x F_{1,1-2\alpha}'(x/\xi) ) \, \idiff \xi  } \\
 \leq & \delta c_{1,\alpha} \int_{\xi>0} (1+\xi)^\alpha ((2-2\alpha)\xi^{1-2\alpha} F_{1,1-2\alpha}(x/\xi) - \xi^{-2\alpha} x F_{1,1-2\alpha}'(x/\xi) ) \, \idiff \xi  \\
 = &  \delta c_{1,\alpha}  x^{2-2\alpha} \int_{\xi>0} (1+tx)^\alpha ((2-2\alpha)t^{1-2\alpha} F_{1,1-2\alpha}(1/t) - t^{-2\alpha} F_{1,1-2\alpha}'(1/t) ) \idiff t \\
 \leq &  \delta c_{1,\alpha} x^{2-2\alpha} (1+x)^\alpha \int_{t>0} (1+t)^\alpha ((2-2\alpha)t^{1-2\alpha} F_{1,1-2\alpha}(1/t) - t^{-2\alpha} F_{1,1-2\alpha}'(1/t) ) \idiff t  \\
 \lesssim & \delta x^{2-2\alpha} (1+x)^\alpha \ .
 \end{align*}
The last integral of $t$ above is finite since the integrand behaves like $t^{-1-\alpha}$ as $t\to\infty$. A similar argument shows that $|\scrT_\alpha(f)(x)|, |\scrT_\alpha(f_0)(x)|\lesssim x$. Combining these estimates with Lemma \ref{Lemma: c(f)} and Lemma \ref{Lemma: cHolderContinuity} yields
$$ |g(x) - g_0(x) | \lesssim \delta^{2-2\alpha} x (1+x)^\alpha + x \delta^{1-\alpha} \lesssim \delta^{1-\alpha} x (1+x)^\alpha \ , $$
which implies that for any $x\in[0,M] $,
$$ \rho(x) | g(x) - g_0(x) | \leq  \delta^{1-\alpha} M \ .$$
Again, provided that $\delta$ is sufficiently small, we have $|\rho(M)g(M)|\leq 2\varepsilon$. By the monotonicity of $\scrR_\alpha(f)$, we also have $|\rho(x) g(x)| \leq 2\varepsilon$ when $x\geq M$. As a consequence, we can choose $\delta$ sufficiently small so that 
$$ \norm{\rho(x)(\scrR_\alpha(f)(x) - \scrR_\alpha(f_0)(x) )}_{L^\infty} \lesssim \varepsilon \ , $$
for any $f\in\calV_1$ such that $\norm{\rho(x)(f(x)-f_0(x))}_{L^\infty}\leq\delta$. We have thus proved that $\scrR_\alpha$ is $L_\rho^\infty$-continuous at $f_0$ as $\varepsilon$ is arbitrary.
\end{proof}


\subsubsection{Existence of fixed-point $f_*$ of $\scrR_\alpha$} One last ingredient for establishing existence of fixed points of $\scrR_\alpha$ is the compactness of the set $\calV_1$. 
\begin{lemma}\label{Lemma: CompactSetV1} The set $\calV_1$ is compact with respect to the $L^\infty_\rho$-norm.
\end{lemma}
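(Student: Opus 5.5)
We prove sequential compactness in the $L^\infty_\rho$-norm; since $\calV_1$ is already closed, it suffices to show that every sequence $(f_n)\subset\calV_1$ has a subsequence converging in $L^\infty_\rho$. The proof has two parts: local uniform convergence from equicontinuity, and control of the tails by the weight $\rho$.

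\emph{Step 1: uniform bounds and equicontinuity on compact sets.} By \eqref{Equation: fUpperLowerBound}, every $f\in\calV_1$ satisfies $0\leq f\leq 1$. The convexity of $f(\sqrt{x})$ means that $x\mapsto f'(x)/x$ is non-decreasing on $(0,\infty)$. Comparing with the lower bound $f\geq 1-x^2$ near the origin, where $f(0)=1$, gives $f'(x)/x\geq -2$ for small $x$, hence for all $x>0$. Thus $|f'(x)|\leq 2x$ in the one-sided sense, uniformly over $\calV_1$. Consequently the family $\calV_1$ is uniformly Lipschitz on each $[0,R]$ with constant $2R$, and by evenness on $[-R,R]$.

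\emph{Step 2: local convergence and identification of the limit.} Arzel\`a--Ascoli applied on $[-R,R]$ for $R=1,2,\dots$, followed by a diagonal argument, produces a subsequence $f_{n_k}$ converging uniformly on compact sets to a continuous even function $f$.

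\emph{Step 3: tail control and convergence in the weighted norm.} Since each $f_n$ is non-increasing on $[0,\infty)$ and satisfies $0\leq f_n\leq 1$, the limit is non-increasing, and $|f_{n_k}(x)-f(x)|\leq 1$ for all $x$. Given $\varepsilon>0$, choose $R$ with $(1+R)^{-\alpha}<\varepsilon$. Then
\[
\sup_{|x|\geq R}\rho(x)\,|f_{n_k}(x)-f(x)|\leq (1+R)^{-\alpha}<\varepsilon ,
\]
while on $[-R,R]$ the difference tends to zero by uniform convergence. Hence $\norm{\rho(f_{n_k}-f)}_{L^\infty}\to 0$. Note that the weight $\rho$ decays to zero, which is exactly what makes this tail estimate work; monotone functions bounded by $1$ need not converge uniformly on all of $\bbR$.

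\emph{Step 4: closedness of $\calV_1$.} It remains to check that $f\in\calV_1$, that is, to pass the defining constraints to the limit. The conditions $f(0)=1$, nonnegativity, monotonicity, convexity of $f(\sqrt{x})$, and $f\geq\max(0,1-x^2)$ are all preserved under pointwise limits.

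The only delicate constraint is $f'_-(1/2)\leq-\eta$, which I expect to be the main obstacle, since left derivatives need not behave well under pointwise limits. Set $h_n(s)=f_n(\sqrt{s})$ and $h(s)=f(\sqrt{s})$; these are convex functions. The constraint reads $h_n'{}_-(1/4)\leq -\eta$, since $f'_-(1/2)=h'_-(1/4)$. For convex functions the left derivative is bounded above by every backward difference quotient:
\[
h'_-(1/4)\leq \frac{h(1/4)-h(s)}{1/4-s}\qquad\text{for every } s<1/4 .
\]
For fixed $s<1/4$, the difference quotients of $h_n$ converge pointwise to those of $h$. Letting $s\uparrow 1/4$ then shows that upper semicontinuity goes the right direction: $h'_-(1/4)\leq\limsup_n h_n'{}_-(1/4)\leq-\eta$. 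To make this rigorous, I would apply the standard fact that for convex $h_n\to h$ pointwise, $h'_-(s_0)\leq \liminf_n h_n'{}_-(s_0)$ fails in general but $h'_-(s_0)\leq \limsup_n h_n'{}_+(s_0)$ holds. I would then combine this with the monotonicity of left derivatives, $h_n'{}_-(s)\leq h_n'{}_-(1/4)$ for $s<1/4$, and take limits of the difference quotients directly; this argument should be checked carefully. Granting it, $f\in\calV_1$, and together with Step 3 this proves that $\calV_1$ is compact in the $L^\infty_\rho$-norm.
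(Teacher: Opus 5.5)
Your Steps 1--3 are essentially the paper's proof. Both use a uniform derivative bound from the convexity of $f(\sqrt{x})$ and the barrier $f\ge 1-x^2$ (the paper records the slightly sharper $|f'|\le\min(2x,2x^{-1})\le 2$). Both then apply Arzel\`a--Ascoli on bounded intervals with a diagonal subsequence, and control the tails by the decay of $\rho$. Your choice of $R$ with $(1+R)^{-\alpha}<\varepsilon$ is in fact the right one. The paper's $L_m=\varepsilon_m^{-2}$ only gives $\rho(L_m)\approx\varepsilon_m^{2\alpha}$, which exceeds $\varepsilon_m$ for small $\varepsilon_m$ when $\alpha<\tfrac12$. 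The paper never checks that limits stay in $\calV_1$; closedness is simply declared in the definition. So your Step 4 goes beyond the paper, but as written it breaks down exactly at the point you flag, and ``granting it'' leaves the proof incomplete.

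\textbf{The inequality you quote is reversed.} For convex $h$ and $s<s_0$, the three-slope inequality gives $\frac{h(s_0)-h(s)}{s_0-s}\le h'_-(s_0)$. These backward quotients increase to $h'_-(s_0)$ as $s\uparrow s_0$, so $h'_-(s_0)=\sup_{s<s_0}\frac{h(s_0)-h(s)}{s_0-s}$. This is the same inequality you used implicitly in Step 1, namely $h'_-(s)\ge (h(s)-h(0))/s\ge -1$.

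\textbf{Your ``standard fact'' is backwards.} By the identity above, $h\mapsto h'_-(s_0)$ is a supremum of functionals that are continuous under pointwise convergence, so it is \emph{lower} semicontinuous. In particular, $h'_-(s_0)\le\liminf_n (h_n)'_-(s_0)$ does hold for convex $h_n\to h$ pointwise. The bound through $(h_n)'_+(s_0)$ is useless, because the hypothesis gives no upper bound on $(h_n)'_+(s_0)\ge (h_n)'_-(s_0)$.

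\textbf{The repair takes two lines.} For each fixed $s<\tfrac14$ you have $\frac{h_n(1/4)-h_n(s)}{1/4-s}\le (h_n)'_-(\tfrac14)\le-\eta$. Let $n\to\infty$ at fixed $s$, which needs only pointwise convergence. Then take the supremum over $s<\tfrac14$ to get $f'_-(\tfrac12)=h'_-(\tfrac14)\le-\eta$.

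This is also why the constraint is placed on the left derivative. The analogous condition on $h'_+(\tfrac14)$ does not pass to limits: for example, $\max(-\eta(s-\tfrac14),-\eta/n)\to\max(-\eta(s-\tfrac14),0)$ uniformly, while the right derivative at $\tfrac14$ jumps from $-\eta$ to $0$. With this replacement, your argument is a complete proof, including the closedness that the paper leaves implicit.
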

\begin{proof}
For any $f\in\calV_1$, we use convexity and monotonicity to obtain
$$ -\frac{f'(x)}{2x} \leq \frac{f(0)-f(x)}{x^2}\leq \min(1,x^{-2}) \ , $$
which implies that $|f'(x)|\leq\min(2x,2x^{-1})\leq2$. Based on this, we are going to show that $\calV_1$ is sequentially compact.
Let $\{f_n\}_{n=1}^\infty$ be an arbitrary sequence in $\calV_1$. Initialize $n_{0,k}=k,k\geq1$.  For each integer $m\geq1$, let $\varepsilon_m=2^{-m}$ and $L_m= \varepsilon_m^{-2}$. It follows that $\rho(x) f_n(x)\leq \rho(x)\leq \varepsilon_m$ for all $x\geq L_m$. Furthermore, since $|f'_n(x)|\leq2$ on $[0,L_m]$, we can apply the Arzela-Ascoli theorem to select a sub-sequence $\{f_{n_m,k}\}_{k=1}^\infty$ of $\{f_{n_{m-1},k}\}_{k=1}^\infty$ such that $\norm{\rho (f_{n_m,i} - f_{n_m,j})}\leq 2\varepsilon_m$ for any $i,j\geq1$. Then the diagonal sub-sequence $\{f_{n_m,m}\}_{m=1}^\infty$ is a Cauchy sequence in the $L^\infty_\rho$-norm. This proves that $\calV_1$ is sequentially compact.
\end{proof}

We are now ready to prove the existence of fixed points of $\scrR_\alpha$ for any $\alpha\in(0,1)$ using the Schauder fixed-point theorem.
\begin{theorem} For each $\alpha\in(0,1)$, the map  $\scrR_\alpha : \calV_1\to\calV_1$ has a fixed point $f_*$. That is, $\scrR_\alpha(f_*)=f_*$. As a corollary, for each $\alpha\in(0,1)$, \eqref{Equation: SelfSimilarProfileOmega} admits a solution $(\Omega,c_\ell ,c_\omega)$ with $f=-\Omega/x\in\calV_1$ and $c_\ell ,c_\omega$ given in Proposition \ref{Proposition: relation1Dmodel2DSQGR2}.
\end{theorem}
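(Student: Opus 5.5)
The plan is to apply the Schauder fixed-point theorem in the form: a continuous self-map of a nonempty, convex, compact subset of a Banach space has a fixed point. The ambient space is $\calV_0$ with the $L^\infty_\rho$-norm, where $\rho(x)=(1+|x|)^{-\alpha}$, and the set is $\calV_1$. All hypotheses have essentially been prepared, so the proof is largely a matter of assembling them.

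First I would check that $\calV_1$ is nonempty and convex. Nonemptiness needs a concrete element. Since $f$ must satisfy $f(0)=1$ and $f'(1/2)\le -\eta<0$, the constant $1$ is excluded, so I would use $f(x)=\max(0,1-x^2)$. It is even and continuous, nonnegative and non-increasing on $[0,\infty)$, and $f(\sqrt{x})=\max(0,1-x)$ is convex. It equals the lower barrier, and $f'_-(1/2)=-1\le-\eta$, because $\eta<1$ is immediate from its explicit formula (the base is $<1$ and it is raised to a power $\ge 2$).

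Convexity of $\calV_1$ holds because every defining condition is preserved under convex combinations $tf_1+(1-t)f_2$. The conditions $f(0)=1$, evenness, nonnegativity, monotonicity and the lower bound $f\ge\max(0,1-x^2)$ are linear or order conditions. Convexity of $f(\sqrt{\cdot})$ is preserved under convex combinations. The left derivative is linear, so $f'_-(1/2)\le-\eta$ is preserved. Closedness in $L^\infty_\rho$ is asserted in the definition; it also follows from compactness in Lemma~\ref{Lemma: CompactSetV1}, whose proof produces limits in $\calV_1$. For that, note that pointwise limits preserve the convexity conditions. For the derivative condition, the left derivative of a convex function is upper semicontinuous under uniform convergence on compacts, via difference quotients $(f(1/2)-f(1/2-h))/h$ and monotonicity in $h$ after the $\sqrt{\cdot}$ reparametrization.

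Next I would invoke the established ingredients in turn. Proposition~\ref{Proposition:RaV1R2} gives $\scrR_\alpha(\calV_1)\subseteq\calV_1$. Proposition~\ref{Proposition: ContinuityMapRa} gives continuity in $L^\infty_\rho$. Lemma~\ref{Lemma: CompactSetV1} gives compactness. One small point remains: $\scrR_\alpha(f)$ must be continuous and even, so that it lies in $\calV_0$. Evenness holds by construction, since we define it on $[0,\infty)$ and extend evenly. Continuity follows because $\scrT_\alpha(f)$ is continuous, its kernel being integrable locally uniformly in $x$. Schauder then yields $f_*\in\calV_1$ with $\scrR_\alpha(f_*)=f_*$.

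For the corollary, I would apply Proposition~\ref{Proposition: relation1Dmodel2DSQGR2}. It gives that $\Omega=-xf_*$ solves \eqref{Equation: SelfSimilarProfileOmega} with $c_\omega=c_\ell=c(f_*)-b(f_*)$, using that $f_*(+\infty)=0$ and $b(f_*)<\infty$ (deferred to Lemma~\ref{Lemma: CompactSupportfixPoint}). The only genuine subtlety is closedness of the derivative constraint, which I would handle via the convex difference-quotient argument described above; everything else is citation of earlier results.
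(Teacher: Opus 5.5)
Your proposal is correct and follows the paper's proof. It applies Schauder on $\calV_1$ in the $L^\infty_\rho$-norm via Proposition~\ref{Proposition:RaV1R2}, Proposition~\ref{Proposition: ContinuityMapRa} and Lemma~\ref{Lemma: CompactSetV1}, then uses Proposition~\ref{Proposition: relation1Dmodel2DSQGR2} with Lemma~\ref{Lemma: CompactSupportfixPoint} for the corollary; your explicit checks of nonemptiness and closedness are useful details the paper leaves implicit.

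A few minor fixes:
\begin{itemize}
\item The closedness step needs \emph{lower} semicontinuity of $f\mapsto f'_-(1/2)$, which is exactly what your supremum-of-chord-slopes argument gives; upper semicontinuity actually fails.
\item Closedness cannot be deduced from Lemma~\ref{Lemma: CompactSetV1}, because that proof only produces a Cauchy subsequence and so presupposes closedness.
\item $\eta<1$ needs the base to be at most $1/10$ (which is true), not merely $<1$, because of the prefactor $2$.
\end{itemize}
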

\begin{proof}
By Proposition \ref{Proposition: ContinuityMapRa} and Lemma \ref{Lemma: CompactSetV1}, $\calV_1$ is convex, closed and compact in the $L^\infty_\rho$-norm, and $\scrR_\alpha$ continuously maps $\calV_1$ into itself. The Schauder fixed-point theorem implies that $\scrR_\alpha$ has a fixed point in $\calV_1$. The second part of the theorem then follows from Proposition \ref{Proposition: relation1Dmodel2DSQGR2}.
\end{proof}


\subsubsection{Properties of fixed-point $f_*$ of $\scrR_\alpha$}

We note that the map $\scrR_\alpha$ involves a positive-part truncation and the construction is designed to allow a free boundary. The following lemma shows that a nontrivial fixed point cannot stay strictly positive forever, and this is consistent with the heuristic idea that an expanding-type self-similar blow-up profile often corresponds to a compact support.

\begin{lemma}\label{Lemma: CompactSupportfixPoint} Let $f_*\in\calV_1$ denote a fixed-point of $\scrR_\alpha$. Then $f_*$ is compactly supported. As a corollary, we know that $c_\ell <0$ and $c(f_*)<b(f_*)<\infty$, and hence $f_*$ corresponds to an expanding-type blow-up.
\end{lemma}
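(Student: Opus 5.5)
The plan is to argue by contradiction against the two ways a fixed point could fail to be compactly supported: either $f_*$ has a positive limit at infinity, or $f_*$ tends to $0$ while staying strictly positive. Throughout I write $f=f_*$, $c=c(f)>0$, $L=\lim_{x\to\infty}f(x)\in[0,1)$; the limit exists by monotonicity. By \eqref{Equation: AsymptoticBehavior} we have $L=\max(0,1-b(f)/c)$. The basic tool is a far-field expansion of $\scrT_\alpha(f)(x)+b(f)$. Since $F_{1,1-2\alpha}(t)\to\frac{1}{1-\alpha}$, we can write
\begin{equation*}
\scrT_\alpha(f)(x)+b(f)=c_{1,\alpha}\int_{\xi>0} f'(\xi)\,\xi^{2-2\alpha}\Big(F_{1,1-2\alpha}(x/\xi)-\tfrac{1}{1-\alpha}\Big)\,\idiff\xi .
\end{equation*}
In this integrand $f'\le 0$, and $F_{1,1-2\alpha}<\frac1{1-\alpha}$ because $F_{1,1-2\alpha}$ is increasing. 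Hence the right-hand side is nonnegative. Consequently, wherever $f>0$ we get $f(x)-L\ge 0$, with an explicit lower bound.

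\textbf{Step 1: no fixed point is strictly positive everywhere.} Suppose $f>0$ on $[0,\infty)$. Then $f=1+\scrT_\alpha(f)/c$ identically, so $f(x)-L=(\scrT_\alpha(f)(x)+b(f))/c$ for all $x$. Using $F_{1,1-2\alpha}'(1/t)=t^{1+2\alpha}F_{1,1-2\alpha}'(t)$, I expect $\frac1{1-\alpha}-F_{1,1-2\alpha}(s)\sim k\,s^{-2\alpha}$ as $s\to\infty$ for some $k>0$. Restricting the integral to $\xi\in(0,1/2)$, where $|f'(\xi)|\gtrsim \eta\,\xi$ by the convexity of $f(\sqrt{\cdot})$ together with $f'(1/2)\le-\eta$, this should give
\begin{equation*}
f(x)-L\gtrsim \frac{\eta}{c}\, x^{-2\alpha}\quad\text{for } x\ge 1 .
\end{equation*}
Feeding this into $b(f)=2c_{1,\alpha}\int_{\xi>0}(f-L)\xi^{1-2\alpha}\,\idiff\xi$ gives a divergent integral of $\xi^{1-4\alpha}$ when $\alpha\le\frac12$. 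On the other hand, $b(f)\le c<\infty$ whenever $L\ge 0$ is attained through the untruncated formula, so we have a contradiction. When $\alpha>\frac12$ this first lower bound is not enough. I would then bootstrap: plug the improved tail lower bound of $f-L$ back into the expansion, now using the contribution of $\xi$ comparable to $x$, where $F_{1,1-2\alpha}(x/\xi)$ is far from its limit. The aim is an upgraded bound $f(x)-L\gtrsim x^{-\beta}$ with $\beta\le 2-2\alpha$, which again makes $b(f)$ infinite. This bootstrap for $\alpha\in(\frac12,1)$ is the step I expect to be the main obstacle, since it needs sharp asymptotics of $F_{1,1-2\alpha}$ from Appendix \ref{Section: Auxiliary functions}.

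\textbf{Step 2: conclusion.} Once strict positivity is excluded, there is a first point $x_0$ with $f(x_0)=0$. Monotonicity and nonnegativity then force $f\equiv0$ on $[x_0,\infty)$, so $f$ is compactly supported. It follows at once that $L=0$ and $b(f)=2c_{1,\alpha}\int_0^{x_0}f\xi^{1-2\alpha}<\infty$. Moreover, $\scrT_\alpha(f)$ is nonincreasing and $\scrT_\alpha(f)(x)\le -c$ for $x\ge x_0$. Since the inequality $\frac1{1-\alpha}-F_{1,1-2\alpha}(x/\xi)>0$ is strict, $\scrT_\alpha(f)$ strictly decreases to $-b(f)$. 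Hence $-b(f)<-c$, i.e.\ $c(f)<b(f)$. By Proposition \ref{Proposition: relation1Dmodel2DSQGR2}, $c_\ell=c(f)-b(f)<0$, which is the expanding-type regime, and the rescaled exponent is $\widetilde{c_\ell}=-\frac1{2-2\alpha}$.
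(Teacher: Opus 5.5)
Your Step 2 is fine; it spells out $c(f_*)<b(f_*)<\infty$ and $c_\ell<0$ more carefully than the paper does. Step 1, however, is the actual content of the lemma, and it has a genuine gap.

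First, the asymptotics you expect are off by one power. Since $F_{1,1-2\alpha}'(t)\sim\frac{2(1+2\alpha)}{3}t$ as $t\to0^+$, the identity $F_{1,1-2\alpha}'(s)=s^{-1-2\alpha}F_{1,1-2\alpha}'(1/s)$ gives $F_{1,1-2\alpha}'(s)\sim\frac{2(1+2\alpha)}{3}s^{-2-2\alpha}$. Hence $\frac{1}{1-\alpha}-F_{1,1-2\alpha}(s)\sim\frac23 s^{-1-2\alpha}$, not $k\,s^{-2\alpha}$. This is just the dipole far field of $\calP_{0,\alpha}(\Omega)(x)/x$ for odd $\Omega$.

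So the contribution of $\xi\in(0,\frac12)$ only gives $f(x)-L\gtrsim\frac{\eta}{c}x^{-1-2\alpha}$. Then $b(f)\gtrsim\int^\infty\xi^{-4\alpha}\idiff\xi$, which diverges only for $\alpha\le\frac14$, not $\alpha\le\frac12$. For $\alpha\in(\frac14,1)$ everything rests on the bootstrap, which you describe only as a hope. As written, the proposal does not prove the lemma on most of the range.

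The missing idea is to compare derivatives at the same point instead of comparing tails. If $f_*>0$ everywhere, then $f_*=1+\scrT_\alpha(f_*)/c(f_*)$ and
\[
f_*'(x)=\frac{c_{1,\alpha}}{c(f_*)}\int_0^\infty f_*'(\xi)\,\xi^{1-2\alpha}F_{1,1-2\alpha}'(x/\xi)\idiff\xi<0 .
\]
Drop the range $\xi>x$. Convexity of $f_*(\sqrt{\cdot})$ makes $f_*'(\xi)/\xi$ non-decreasing, so $f_*'(\xi)\le\frac{\xi}{x}f_*'(x)$ on $(0,x)$. This is the computation in \eqref{Equation: Ra1}, and it gives $f_*'(x)\le\frac{C_\alpha}{c(f_*)}x^{2-2\alpha}f_*'(x)$ with $C_\alpha>0$. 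Dividing by $f_*'(x)<0$ yields $1\ge\frac{C_\alpha}{c(f_*)}x^{2-2\alpha}$ for every $x$, which is absurd for large $x$. This is the paper's proof: it is uniform in $\alpha\in(0,1)$ and needs no tail asymptotics.

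Your idea of using $\xi$ comparable to $x$ can also be closed along these lines. Insert the same convexity bound into your formula for $\scrT_\alpha(f)+b(f)$. This gives $f(x)-L\ge\frac{C_\alpha'}{c}\,|f'(x)|\,x^{3-2\alpha}$, i.e. $|(\log(f-L))'|\lesssim x^{2\alpha-3}$. That bound is integrable at infinity, so $f-L$ stays bounded below, contradicting $f(x)\to L$ without ever needing $b(f)=\infty$.
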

\begin{proof}
Suppose by contrast that $f_*$ is not compactly supported, then for any $x>0$, one has $f_*'(x) = \scrR_\alpha(f_*)'(x) = \frac{c_{1,\alpha}}{c(f_*)} \int_0^\infty f_*'(\xi) \xi^{1-2\alpha} F_{1,1-2\alpha}'(x/\xi)\, \idiff \xi < 0 $ and thus we use the result from \eqref{Equation: Ra1} to obtain
$$ 1 = \frac{\scrR_\alpha(f_*)'(x)}{f_*'(x)} \geq \frac{c_{1,\alpha}}{c(f_*)} \frac{2^{1-2\alpha}(1+2\alpha)}{(1-\alpha)(3-2\alpha)(5-2\alpha)} x^{2-2\alpha}  \ ,$$
should hold for any $x\geq 1/2$, which is impossible.
\end{proof}

To obtain higher regularity of $f$ or $\Omega$, we will show in the following lemma on the improved regularity for operator $\scrT_\alpha$. The argument says that $x\scrT_\alpha(f)$ has extra $2-2\alpha$ order of local regularity than $xf$.
\begin{lemma}\label{Lemma: RegMapTa1} Let $f\in\calV_1$ and assume that $b(f)<\infty$. Assume that $f$ is compactly supported and let $[-L,L]$ denote $\supp f$. Let $s\geq0$ be some fixed number. Let $\chi_0,\chi_1\in C_c^\infty((-L,L))$ be such that $\chi_1\equiv 1$ on a neighborhood of $\supp\chi_0$.
If
\[
(-\Delta)^{\frac{s}{2}}(xf(x)\chi_1)\in L^2_{loc}(\bbR),
\]
then
\[
(-\Delta)^{\frac{s}{2}+1-\alpha}(x \scrT_\alpha(f)\chi_0) \in L^2_{loc}(\bbR).
\]\end{lemma}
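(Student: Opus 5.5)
The plan is to identify $x\scrT_\alpha(f)$ with a Riesz potential of $\Omega=-xf$ plus a linear function, and then to split $\Omega$ into a near part, which carries the regularity gain, and a far part, which is smooth on $\supp\chi_0$. By the definition of $\scrT_\alpha$ and \eqref{Equation: fixedPointEqn}, $x\scrT_\alpha(f)(x)=v(x)-v'(0)x=\calP_{0,\alpha}(\Omega)(x)+(c_\ell-v'(0))x$. The linear term is smooth, and multiplying it by $\chi_0$ gives a $C_c^\infty$ function, which lies in every local Sobolev class. It therefore suffices to prove that $(-\Delta)^{\frac s2+1-\alpha}\bigl(\chi_0\,\calP_{0,\alpha}(\Omega)\bigr)\in L^2_{loc}$. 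The convergence of $\calP_{0,\alpha}(\Omega)$ follows from $b(f)<\infty$ and the compact support of $f$. For $\alpha<\frac12$ the kernel $|x-\xi|^{1-2\alpha}$ grows, and the relevant bound is $\int\xi^{1-2\alpha}f\,d\xi<\infty$.

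Next I decompose $\Omega=\Omega\chi_1+\Omega(1-\chi_1)$.

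\emph{Far part.} On a neighborhood of $\supp\chi_0$, $\Omega(1-\chi_1)$ vanishes. For $x\in\supp\chi_0$ and $\xi\in\supp(1-\chi_1)\cap[-L,L]$ we have $|x-\xi|\geq d>0$. The kernel $|x-\xi|^{1-2\alpha}$ (or $\log|x-\xi|$ when $\alpha=\frac12$) is then $C^\infty$ in $x$, with all derivatives bounded uniformly in $\xi$ over the compact set. Hence $\calP_{0,\alpha}(\Omega(1-\chi_1))$ is $C^\infty$ near $\supp\chi_0$, and its product with $\chi_0$ is in $C_c^\infty$. Applying $(-\Delta)^{\frac s2+1-\alpha}$ to a $C_c^\infty$ function gives a smooth function, which is certainly in $L^2_{loc}$.

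\emph{Near part.} The function $h:=\Omega\chi_1=-xf\chi_1$ is compactly supported and bounded, with $(-\Delta)^{s/2}h\in L^2_{loc}$. I first upgrade this to $h\in H^s(\bbR)$. Away from $\supp h$, $(-\Delta)^{s/2}h$ is given by a kernel decaying like $|x|^{-1-s}$, so it is square integrable at infinity; together with $h\in L^2$ this gives $|k|^s\hat h\in L^2$. In one dimension $\calP_{0,\alpha}$ is, up to the constant $c_{1,\alpha}$ and modulo an additive polynomial of degree at most one coming from the low-frequency regularization (the case $\alpha\leq\frac12$), the Riesz potential $(-\Delta)^{-(1-\alpha)}$. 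Its Fourier multiplier is $C_\alpha|k|^{-(2-2\alpha)}$, since the kernel $|x|^{(2-2\alpha)-1}$ is that of a Riesz potential of order $2-2\alpha$, with $\log|x|$ at the endpoint. Consequently
\[
(-\Delta)^{\frac s2+1-\alpha}\calP_{0,\alpha}(h)=C_\alpha(-\Delta)^{\frac s2}h+(\text{contributions supported at } k=0).
\]
The extra factor $|k|^{2-2\alpha+s}$ annihilates the polynomial ambiguity at $k=0$: $\hat h$ is smooth because $h$ has compact support, and $|k|^{2-2\alpha}$ vanishes there to sufficient order. Thus $\calP_{0,\alpha}(h)\in H^{s+2-2\alpha}_{loc}$. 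Multiplying by $\chi_0\in C_c^\infty$ preserves local $H^{s+2-2\alpha}$ regularity and produces a compactly supported $H^{s+2-2\alpha}$ function. This can be justified by the standard product/commutator estimate for a smooth cutoff, or by writing $\chi_0\calP_{0,\alpha}(h)=\chi_0\calP_{0,\alpha}(h)\tilde\chi$ with $\tilde\chi\equiv1$ near $\supp\chi_0$. Hence $(-\Delta)^{\frac s2+1-\alpha}(\chi_0\calP_{0,\alpha}(h))\in L^2$.

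I expect the main obstacle to be making the identification of $\calP_{0,\alpha}$ with the Riesz potential $(-\Delta)^{\alpha-1}$ rigorous for non-decaying data. For $\alpha<\frac12$ and $\alpha=\frac12$ the kernel does not decay, so the Fourier-side identity holds only modulo polynomials, and one has to check that these polynomials are affine. Affine functions are harmless after multiplication by $\chi_0$, since they are smooth. My first approach is to differentiate once: $\partial_x\calP_{0,\alpha}=\calP_{1,\alpha}=c\,\partial_x(-\Delta)^{\alpha-1}$, whose kernel $\frac{x-\xi}{|x-\xi|^{1+2\alpha}}$ decays for all $\alpha\in(0,1)$. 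I would establish the regularity gain for $\calP_{1,\alpha}(h)$ in $H^{s+1-2\alpha}_{loc}$ and then integrate back, which only introduces a constant.
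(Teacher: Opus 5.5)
Your argument is correct, and its outer structure is the paper's: the same affine remainder, the same near/far splitting of $\Omega$ with $\chi_1$, and the same reason the far part is harmless (the kernel is smooth off the diagonal and $f$ vanishes outside $[-L,L]$).

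The two routes differ in the near part. The paper commutes the operator past the cutoff, $(-\Delta)^{1-\alpha}(\chi_0\calP_{0,\alpha}h)=\chi_0(-\Delta)^{1-\alpha}\calP_{0,\alpha}h+[(-\Delta)^{1-\alpha},\chi_0]\calP_{0,\alpha}h$. It then invokes $(-\Delta)^{1-\alpha}\calP_{0,\alpha}=C_\alpha\,\mathrm{Id}$ and controls the commutator terms by Lemma \ref{Lemma:FracCommCutoff}, together with an order count ($s-1$) for $(-\Delta)^{s/2}[(-\Delta)^{1-\alpha},\chi_0]\calP_{0,\alpha}$. You instead prove the elliptic gain $\calP_{0,\alpha}(h)\in H^{s+2-2\alpha}_{loc}$ directly on the Fourier side and cut off only afterwards. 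This needs no commutator calculus, only that $H^\sigma_{loc}$ is stable under multiplication by $C_c^\infty$ functions.

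Your version is also more explicit at two points the paper passes over quickly. The first is the upgrade from $(-\Delta)^{s/2}h\in L^2_{loc}$ to $h\in H^s(\bbR)$; the paper simply asserts $xf\chi_1\in H^s_{loc}$. The second is the meaning of $(-\Delta)^{1-\alpha}\calP_{0,\alpha}=C_\alpha\,\mathrm{Id}$ when $\alpha\le\frac12$, where the kernel $|x|^{1-2\alpha}$ or $\log|x|$ does not decay.

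On that second point, I would make your fallback the main argument. The operator $\partial_x\calP_{0,\alpha}=\calP_{1,\alpha}$ has the locally integrable multiplier $c\,i\,\mathrm{sgn}(k)|k|^{2\alpha-1}$, so the low-frequency part of $\calP_{1,\alpha}(h)$ is a smooth function and the high-frequency part lies in $H^{s+1-2\alpha}$. Since $\calP_{0,\alpha}(h)$ is continuous, one integration then gives $\calP_{0,\alpha}(h)\in H^{s+2-2\alpha}_{loc}$. This avoids any discussion of polynomial ambiguities, or of products of $|k|^{s+2-2\alpha}$ with distributions supported at $k=0$; your remark that the extra factor "annihilates the polynomial ambiguity" is correct in spirit but informal.

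The paper's formulation is shorter because it hands everything to pseudodifferential order counting. The cost is that it leaves exactly this low-frequency issue implicit.
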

\begin{proof} 
We note that $x\scrT_\alpha(f)\chi_0 =\calP_{0,\alpha}(-xf)\chi_0 + (c_\ell  - b(f))x\chi_0$. Hence, we have 
$$  (-\Delta)^{\frac{s}{2}+1-\alpha}(x\scrT_\alpha(f)\chi_0) =  (-\Delta)^{\frac{s}{2}+1-\alpha}(\calP_{0,\alpha}(-xf)\chi_0) +  (-\Delta)^{\frac{s}{2}+1-\alpha}((c_\ell  - b(f))x\chi_0 ) \ .$$
We note that $(c_\ell  - b(f))x\chi_0$ in the last term is in $C_c^\infty(\bbR)$, and therefore
$$ (-\Delta)^{\frac{s}{2}+1-\alpha}((c_\ell  - b(f))x\chi_0) \in C^\infty(\bbR) \subseteq L^2_{loc} \ . $$
Therefore it remains to prove the claim for $\chi_0 \calP_{0,\alpha}(-xf)$. Let $\chi_1\in C_c^\infty((-L,L))$ such that $\chi_1(x)\equiv1$ on a neighborhood of $\supp\chi_0$.
 We split $\calP_{0,\alpha}(-xf)\chi_0$ into two terms, where one corresponds to the near component and the other corresponds to the far component:
 $$ \chi_0 \calP_{0,\alpha}(-xf) = \chi_0 \calP_{0,\alpha}(-xf\chi_1) + \chi_0 \calP_{0,\alpha}(-xf(1-\chi_1)) \ .$$
We claim that the far component $\chi_0 \calP_{0,\alpha}(-xf(1-\chi_1))$ is smooth. Since
\[
\text{dist}(\supp \chi_0,\supp(1-\chi_1))>0,
\]
the kernel of $\calP_{0,\alpha}$ is $C^\infty$ in $x$ on $\supp\chi_0$, uniformly for
$\xi\in \supp(1-\chi_1)$. Therefore
\[
\chi_0 \calP_{0,\alpha}(-xf(1-\chi_1))\in C_c^\infty(\bbR).
\]
Since $(-\Delta)^{\frac{s}{2}}$ maps $C_c^\infty(\bbR)$ continuously into $C^\infty(\bbR)$ for every $s\ge 0$,
we obtain
\[
(-\Delta)^{\frac{s}{2}+1-\alpha}\!\bigl(\chi_0 \calP_{0,\alpha}(-xf(1-\chi_1))\bigr)\in C^\infty(\bbR)\subset L^2_{loc}(\bbR).
\]
For the near component $\chi_0 \calP_{0,\alpha}(-xf\chi_1)$, we write
\[
(-\Delta)^{1-\alpha}\bigl(\chi_0 \calP_{0,\alpha}(-xf\chi_1)\bigr)
=
\chi_0 (-\Delta)^{1-\alpha}\calP_{0,\alpha}(-xf\chi_1)
+
[(-\Delta)^{1-\alpha},\chi_0]\calP_{0,\alpha}(-xf\chi_1).
\]
We note that $(-\Delta)^{1-\alpha}\calP_{0,\alpha}$ is the identity map up to the fixed scalar $C_\alpha\neq 0$, so
the first term equals $-C_\alpha\,xf\chi_0$, because $\chi_1\equiv 1$ on $\supp\chi_0$.
Applying $(-\Delta)^{s/2}$, we obtain
\begin{align*}
& (-\Delta)^{\frac{s}{2}+1-\alpha}\bigl(\chi_0 \calP_{0,\alpha}(-xf\chi_1)\bigr) \\
= & 
-C_\alpha\,(-\Delta)^{s/2}(xf\chi_0)
+
(-\Delta)^{s/2}[(-\Delta)^{1-\alpha},\chi_0]\calP_{0,\alpha}(-xf\chi_1) \\
= &
-C_\alpha\,\chi_0(-\Delta)^{s/2}(xf\chi_1)
-C_\alpha[(-\Delta)^{s/2},\chi_0](xf\chi_1) \\
&
+(-\Delta)^{s/2}[(-\Delta)^{1-\alpha},\chi_0]\calP_{0,\alpha}(-xf\chi_1).
\end{align*}

We now treat the three terms on the right-hand side.

First, since $\chi_1\equiv 1$ on a neighborhood of $\supp\chi_0$, the hypothesis
\[
(-\Delta)^{s/2}(xf\chi_1)\in L^2_{loc}(\bbR)
\]
implies
\[
\chi_0(-\Delta)^{s/2}(xf\chi_1)\in L^2_{loc}(\bbR).
\]

Second, by Lemma~\ref{Lemma:FracCommCutoff} (applied to $[(-\Delta)^{s/2},\chi_0]$ with $\gamma=s$ and $t=1$),
the commutator $[(-\Delta)^{s/2},\chi_0]$ maps $H^s_{loc}$ into $H^1_{loc}\subset L^2_{loc}$.
Since $xf\chi_1\in H^s_{loc}$ by the hypothesis, it follows that
\[
[(-\Delta)^{s/2},\chi_0](xf\chi_1)\in L^2_{loc}(\bbR).
\]

Third, we use the same method to show that $(-\Delta)^{s/2}[(-\Delta)^{1-\alpha},\chi_0]\calP_{0,\alpha}$ is a pseudo-differential operator of order $s-1$, and then we obtain
\[
(-\Delta)^{s/2}[(-\Delta)^{1-\alpha},\chi_0]\calP_{0,\alpha}(-xf\chi_1)\in L^2_{loc}(\bbR).
\]

Hence
\[
(-\Delta)^{\frac{s}{2}+1-\alpha}\bigl(\chi_0 \calP_{0,\alpha}(-xf\chi_1)\bigr)\in L^2_{loc}(\bbR) \ . \qedhere
\]
\end{proof}

Thus we have the following result on the smoothness for the fixed-point $f_*$ of operator $\scrR_\alpha$.
\begin{proposition}\label{Proposition: SmoothnessProfile}
For any $\alpha\in(0,1)$, let $f_*\in\calV_1$ be a fixed-point of $\scrR_\alpha$. Let $L := \sup\{x: f_*(x)>0\}$. Then $f_*$ is compactly supported on $[-L,L]$ and $f_*$ is smooth in the interior  $(-L,L)$.
\end{proposition}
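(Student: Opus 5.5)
Compact support is already given by Lemma \ref{Lemma: CompactSupportfixPoint}. Since $f_*$ is non-increasing on $[0,\infty)$ with $f_*(0)=1$, we have $f_*>0$ on $(-L,L)$ and $f_*=0$ outside $[-L,L]$, so $\supp f_*=[-L,L]$. Also $b(f_*)<\infty$ by the same lemma. The plan is to bootstrap interior regularity with Lemma \ref{Lemma: RegMapTa1}, using the fixed-point identity on $(-L,L)$.

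On $(-L,L)$ the truncation is inactive, since $1+\scrT_\alpha(f_*)/c(f_*)=f_*>0$ there. Hence
\[
x f_*(x) = x + \frac{x\,\scrT_\alpha(f_*)(x)}{c(f_*)} \quad\text{on } (-L,L) ,
\]
and after multiplying by any cutoff $\chi_0\in C_c^\infty((-L,L))$,
\[
x f_*\chi_0 = x\chi_0 + c(f_*)^{-1}\, x\scrT_\alpha(f_*)\chi_0 .
\]
The term $x\chi_0$ is smooth. So any gain in regularity for $x\scrT_\alpha(f_*)\chi_0$ passes directly to $xf_*\chi_0$.

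We start the bootstrap at $s=0$. The function $xf_*\chi_1$ is bounded and compactly supported, so it lies in $L^2$. Given a compact $K\subset(-L,L)$, choose a nested chain of cutoffs $\chi^{(0)}\succ\chi^{(1)}\succ\cdots\succ\chi^{(N)}$ in $C_c^\infty((-L,L))$, each identically $1$ on a neighborhood of the support of the next, with $\chi^{(N)}\equiv1$ on $K$. Suppose $(-\Delta)^{s_k/2}(xf_*\chi^{(k)})\in L^2_{loc}$, with $s_k=k(2-2\alpha)$. Lemma \ref{Lemma: RegMapTa1}, applied with $\chi_1=\chi^{(k)}$ and $\chi_0=\chi^{(k+1)}$, gives $(-\Delta)^{s_k/2+1-\alpha}(x\scrT_\alpha(f_*)\chi^{(k+1)})\in L^2_{loc}$. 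By the identity above, $xf_*\chi^{(k+1)}\in H^{s_{k+1}}_{loc}$. Since each step gains $2-2\alpha>0$ derivatives, after $N$ steps $xf_*\in H^{N(2-2\alpha)}$ near $K$ for every $N$. By Sobolev embedding, $xf_*\in C^\infty$ on a neighborhood of $K$, and since $K$ is arbitrary, $xf_*\in C^\infty((-L,L))$.

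It remains to divide by $x$. Away from $0$ this is immediate. Near $0$, if $h=xf_*$ is smooth and odd, then $f_*(x)=\int_0^1 h'(tx)\,dt$, which is smooth. Alternatively, one can note that $\scrT_\alpha(f_*)$ itself is even and smooth near $0$.

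The main obstacle is the precise bookkeeping in Lemma \ref{Lemma: RegMapTa1}. The hypothesis there is local in $L^2_{loc}$ form, and the passage from $L^2_{loc}$ bounds on $(-\Delta)^{s/2}(\cdot)$ of a compactly supported function to membership in $H^s$ needs care. One also has to check that the commutator term remains controlled as $s$ grows; that lemma asserts this, and we would verify that it needs only $L^2$ of $xf_*\chi_1$ together with smoothness of the cutoffs.
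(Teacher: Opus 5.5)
Your argument is correct and is the same as the paper's. Compact support comes from Lemma~\ref{Lemma: CompactSupportfixPoint}, and Lemma~\ref{Lemma: RegMapTa1} is then iterated on $xf_*$, using that the truncation in $\scrR_\alpha$ is inactive on $(-L,L)$ so that $xf_*=x+c(f_*)^{-1}x\scrT_\alpha(f_*)$ there, with a gain of $2-2\alpha$ derivatives per step. Your nested cutoffs, the division by $x$ via $f_*(x)=\int_0^1 (xf_*)'(tx)\,dt$, and the $L^2_{loc}$-to-$H^s$ point you flag only spell out what the paper's two-line proof leaves implicit; that last point is harmless, since $(-\Delta)^{s/2}$ of a compactly supported $L^2$ function is $O(|x|^{-1-s})$ away from its support.
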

\begin{proof}
From Lemma \ref{Lemma: CompactSupportfixPoint}, we know that $f_*$ has compact support. Since $f_*\in\calV_1$, $f_*$ is continuous.  Since $\scrR_\alpha(f) = \max\rbracket{0, 1 + c(f)^{-1} \scrT_\alpha(f) }$, we use Lemma \ref{Lemma: RegMapTa1} repeatedly to improve regularity of $\omega_*(x)=-xf_*(x)$ and show that $\omega_*$ is smooth in the interior  $(-L,L)$. As a consequence, $f_*$ is smooth in the interior $(-L,L)$.
\end{proof}

\section{Generalized SQG  on $\HP$}\label{Section: SQGHP}
A recurring intuition in 2D fluid problems is that boundaries can create a hyperbolic flow geometry: fluid can be driven toward a distinguished point along the boundary while being expelled in the normal direction, creating a competition between compression and ejection. In the gSQG setting on the upper half-plane $\bbR^2_+$ (with the natural odd reflection formulation for Dirichlet boundary data), this geometry motivates searching for scenarios where inward compression dominates and produces singular growth. The half-plane problem is therefore a natural testing ground for possible blow-up mechanisms. This perspective is not merely heuristic: The role of the solid boundary is to create scenarios in which the flow toward the origin is stronger than the flow away from it, which is the mechanism underlying double-exponential growth in certain 2D Euler boundary scenarios. This suggests that boundaries may also facilitate singularity formation in SQG-type models.  There are now rigorous results indicating that boundaries can indeed change the picture in a decisive way. For example, local well-posedness in boundary-adapted classes for $\alpha\in(0,\frac14]$ on the half-plane is proven in \cite{Zlato2023LocalRA}, and the same work also constructs solutions exhibiting finite-time blow-up throughout that regime, with sharp ill-posedness beyond $\alpha>\frac14$.

To probe more robust blow-up scenarios, we next consider gSQG on the upper half-plane $\HP$, where the presence of a solid boundary can create a hyperbolic flow geometry: fluid is driven toward the origin along the boundary while being expelled along the vertical axis, opening the possibility that the inward compression dominates and produces singular growth, as happens in related boundary-driven mechanisms for 2D Euler.  We formulate the half-plane problem via odd reflection and develop the corresponding dynamic rescaling, so that steady states of the rescaled system represent candidate self-similar blow-up profiles for the original equation; in parallel, we derive a boundary-based one-dimensional reduction that again captures the leading-order singular behavior and serves as a tractable setting for a fixed-point construction of blow-up profiles consistent with numerical simulations.

We look at the (inviscid) generalized surface quasi-geostrophic (gSQG) on $\HP$:
\begin{equation}\label{Equation: gSQG-HP}
\begin{cases}
    \pt \theta(x,y,t)  + u(x,y,t)\cdot\nabla \theta(x,y,t) = 0 \text{ on } \HP \\
    u(x,y,t) = \nabla^\perp \NLIalp \overline{\theta}(x,y,t) 
    \end{cases} \ , \tag{gSQG-$\HP$}
\end{equation}
where $\alpha\in(0,\frac12)$ and $\overline{\theta}(x,y,t)$ is the extension of $\theta(x,y,t)$ from $\HP$ to $\bbR^2$ via odd reflection:
$$ \overline{\theta}(x,y,t)=
\begin{cases}
\theta(x,y,t) & \text{ if } y\geq 0 \\
-\theta(x,-y,t) & \text{ if } y < 0
\end{cases} \ .
$$
  
  The velocity $u = (u_1,u_2)$ can be explicitly expressed as the following singular integral:
  \begin{align*}  
        u_1(x,y,t) & = c_{0,\alpha} \iint_{(\xi,\eta)\in\HP}  \frac{(y-\eta)  \theta(\xi,\eta,t) }{((x-\xi)^2+(y-\eta)^2)^{1+\alpha}}  -  \frac{(y+\eta)  \theta(\xi,\eta,t) }{((x-\xi)^2+(y+\eta)^2)^{1+\alpha}}     \idiff\xi\diff\eta \ ,  \\
    u_2(x,y,t) & =- c_{0,\alpha}  \iint_{(\xi,\eta)\in\HP}  \frac{(x-\xi)  \theta(\xi,\eta,t) }{((x-\xi)^2+(y-\eta)^2)^{1+\alpha}}  - \frac{(x-\xi)  \theta(\xi,\eta,t) }{((x-\xi)^2+(y+\eta)^2)^{1+\alpha}}  \idiff\xi\diff\eta \ .
\end{align*}


\subsection{1D reduction for \eqref{Equation: gSQG-HP}}
In particular, we have $u_2(x,y,t)=0$ when $y=0$. Formally, let us consider a simplified model by taking $\theta$ independent of $y$. Thus we consider the following ansatz:
$$ \theta(x,t) = (T-t)^{c_\theta} \Theta\rbracket{ \frac{x}{(T-t)^{c_\ell }}}  \ . $$
Substituting above ansatz into \eqref{Equation: gSQG-HP} and consider the case $y=0$ yields
$$ -c_\theta (T-t)^{c_\theta-1} \Theta(z) + (T-t)^{c_\theta-1} c_\ell  z  \Theta'(z) + (T-t)^{2c_\theta -2\alpha c_\ell  } U(z) \Theta'(z) = 0 \ ,$$
where $z = x/ (T-t)^{c_\ell }$ and 
\begin{align*} U(x) & = -2c_{0,\alpha} \iint_{\xi\in\bbR,\eta>0} \frac{\eta\Theta(\xi)}{((x-\xi)^2+\eta^2)^{1+\alpha}}  \idiff\xi\diff\eta   = \frac{2c_{0,\alpha}}{-2\alpha} \int_{\xi\in\bbR}   \frac{\Theta(\xi) }{|x-\xi|^{2\alpha}}     \idiff \xi  \  .
\end{align*} Balancing the above equation yields $ c_\theta - 2\alpha c_\ell  + 1 =0  $
and an equation for the self-similar profile:
\begin{equation}\label{Equation: SelfSimilarProfileOmegaHP} (c_\ell  x + U(x)) \partial_x\Theta(x)  = c_\theta \Theta(x)  \ .  \end{equation}
We note that if $(\Theta(x),c_\ell ,c_\theta)$ is a solution to above equation, then so is
\begin{equation}\label{Equation: scaling invariant HP}
(\Theta_{\lambda,\gamma}(x), c_{l,\lambda,\gamma}, c_{\omega,\lambda,\gamma}) = ( \gamma \Theta(\lambda x), \gamma \lambda^{2\alpha}  c_\ell , \gamma \lambda^{2\alpha} c_\theta )  \ .
\end{equation}
This means that we can relax the restriction $c_\theta - 2\alpha c_\ell  = -1  $ to any $c_\theta - 2\alpha c_\ell  < 0 $. In fact, for any $(c_\theta, c_\ell )$ such that $c_\theta - 2\alpha c_\ell  < 0 $, one can define
$$ (\widetilde{c_\theta},\widetilde{c_\ell }) = \rbracket{\frac{c_\theta}{ 2\alpha c_\ell  - c_\theta} , \frac{c_\ell }{2\alpha c_\ell  - c_\theta} } \ , $$
which satisfies $\widetilde{c_\theta} -2\alpha\widetilde{c_\ell }=-1$.
 For simplicity, let us take $c_\theta=1$ and the only requirement for $c_\ell $ is $c_\ell  > (2\alpha)^{-1}$.

Furthermore, we look for solutions that satisfy the following conditions:
 \begin{itemize}
 \item Odd symmetry: $\Theta(x)$ is an odd function of $x$, i.e., $\Theta(-x) = - \Theta(x)$.
 \item Regularity: $\Theta\in H^1_{loc}(\bbR)$.
 \item Non-degeneracy: $\Theta'(0)\neq0$.
 \end{itemize}

In view of \eqref{Equation: scaling invariant HP}, we can take $\Theta'(0)=1$ without loss of generality. Thus, let us consider the following change of variable
$$ f(x) :=\frac{\Theta(x)}{x\Theta'(0)} = \frac{\Theta(x)}{x}, V(x) = 1- U'(0)+ \frac{U(x)}{x} \ ,$$
and one has 
$$  \frac{f'}{f} = \frac{1-V(x)}{xV(x)} \ .$$
As a consequence, we reduce the equation \eqref{Equation: SelfSimilarProfileOmegaHP} for the self-similar profile to the following form:

\begin{equation}\label{Equation: fixedPointEqnHP}
\begin{split}
   f(x) & = \exp\rbracket{ \int_0^x \frac{1-V(y)}{y V(y)} \idiff y} \ , \\
    U(x) & = \frac{2c_{0,\alpha}}{-2\alpha} \int_{\xi\in\bbR}   \frac{ \xi f(\xi) }{|x-\xi|^{2\alpha}}     \idiff \xi \ , \\
    V(x) & = 1- U'(0)+ \frac{U(x)}{x}  \ .
       \end{split} 
\end{equation}


\subsection{Existence of Solutions by a fixed-point method}
Our goal of this section is to show that the nonlinear system \eqref{Equation: fixedPointEqnHP} admits non-trivial solutions. We do so by converting the problem into a fixed-point problem of some nonlinear map. In detail, we are going to define a linear operator $\frT_\alpha(f)(x) := \frac{U(x)}{x}- U'(0)$ which simplifies \eqref{Equation: fixedPointEqnHP} to $f(x) = \exp(\int_0^x -\frac{\frT_\alpha(f)(y)}{y(1+\frT_\alpha(f)(y))}dy)$, and then the problem becomes finding a fixed-point of nonlinear map $\frR_\alpha(f):=\exp(\int_0^x -\frac{\frT_\alpha(f)(y)}{y(1+\frT_\alpha(f)(y))}dy)$. The full details of one-to-one correspondence between solutions to \eqref{Equation: fixedPointEqnHP} and fixed-points of $\frR_\alpha$ will be shown in Proposition \ref{Proposition: relation1Dmodel2DSQGHP}.

To show existence of a fixed-point of the proposed map $\frR_\alpha(f)$, we are going to use again the Schauder fixed-point theorem. 

\begin{remark} As we mentioned above, this fixed-point strategy follows the approach developed in \cite{HQWW24} to study the finite-time self-similar blow-up for gCLM. One introduces a nonlinear operator on a suitably chosen invariant, convex, and compact set of monotone “shape-controlled” functions, verifies that the operator maps this set into itself and is continuous, applies the Schauder fixed-point theorem to obtain a profile, and then bootstraps regularity. The monotonicity and convexity-type constraints are carefully constructed so that the relevant operators preserve qualitative shape properties and yield the compactness estimates required by Schauder’s theorem. The precise choice of constraints is, however, kernel-dependent: compared with gCLM, the gSQG reduction involves a different singularity order and far-field behavior, which necessitates working in a different topology (e.g., weighted control and different decay/barrier conditions) to establish invariance, continuity, and compactness for our nonlinear map $\frR_\alpha$. Accordingly, we select an appropriate Banach space $\scrV_1$ tailored to these operator features.  Furthermore, for the one-dimensional reduction of gSQG on $\HP$, the difference is that the profile is focusing-type and has no compact support, so the invariant set must build in decay envelopes and a different normalization mechanism. Also, the proof has extra technical constants because we need to control tails carefully.
\end{remark}

 To this end, we need to select an appropriate Banach function space $\scrV_1$ in which we can establish invariance, continuity, and compactness of our nonlinear map $\frR_\alpha$.


\subsubsection{Details of function set $\scrV_1$ as the invariant set for the fixed-point method.}
Consider the Banach space of even continuous functions,
$$  \scrV_0 = \{ \Omega \in \calC(\bbR) \cap L^\infty(\bbR) \ : \ \Omega(x) = \Omega(-x)  \} \ ,$$
endowed with $L^\infty$-norm. 

Let us define
 $$ c_{0,\alpha}''':=  \frac{   (4^{1 + \alpha} - 4 + \alpha - 4 \alpha^3) \Gamma(\alpha) }{  2 \pi\alpha(5 - 2 \alpha) (3 - 2 \alpha) (1 - 2 \alpha)  \Gamma(2-\alpha)}  \ , $$
 and let $t_0>1$ be sufficiently large such that 
 \begin{equation}\label{Equation: t0}
  t_0^{\frac12-\alpha} > \max\rbracket{\frac{1}{c_{0,\alpha}''' },\frac{1-2\alpha}{2\alpha c_{0,\alpha}''' }} , \ t_0^{\frac12+\alpha}> 2^{2^\alpha+2}\frac{\Gamma(2+\alpha)\Gamma(\frac12-\alpha)}{3\pi \alpha\Gamma(1-\alpha)} \ .
  \end{equation}
 Moreover, we take $\delta_l\geq\frac12$ such that
\begin{equation}\label{Equation: deltal}
\frac{2^{2\alpha} \Gamma(2+\alpha)\Gamma(\frac12-\alpha) }{3\pi\Gamma(1-\alpha)}\leq \frac{\Gamma(\delta_l+1) t_0^{2\alpha-1}}{\Gamma(\delta_l+\alpha+\frac12)}\leq 2 \frac{2^{2\alpha} \Gamma(2+\alpha)\Gamma(\frac12-\alpha) }{3\pi\Gamma(1-\alpha)} \ .
\end{equation}
We comment that this $\delta_l$ can be obtained: When $\delta_l$ is sufficiently large, the middle term $\frac{\Gamma(\delta_l+1) t_0^{2\alpha-1}}{\Gamma(\delta_l+\alpha+\frac12)}$ converges to $\rbracket{\frac{\delta_l+\alpha+\frac12}{t_0^2}}^{\frac12-\alpha}$, so it suffices to let both $\delta_l$ and $t_0$ be sufficiently large such that $\frac{\delta_l+\alpha+\frac12}{t_0^2} = \rbracket{\frac32 \frac{2^{2\alpha} \Gamma(2+\alpha)\Gamma(\frac12-\alpha) }{3\pi\Gamma(1-\alpha)}}^{\frac{1}{\frac12-\alpha}} $.

We define \begin{equation}\begin{split}
& f_l(x) := \rbracket{1+  t_0^{-2} x^2}^{-\delta_l},    \\
& f_u(x) := \min\rbracket{1, t_0^{\delta_u} x^{-\delta_u} }, \  \delta_u:= \frac{ c_{0,\alpha}'''   t_0^{\frac12-\alpha}}{ 1 +c_{0,\alpha}''' t_0^{\frac12-\alpha}} \in(1-2\alpha,1) \ .
\end{split}\end{equation}

Moreover, we consider a closed (in the $L^\infty$-norm) and convex subset of $\scrV_0$,
$$ \scrV_1 :=  \cbracket{ f\in\scrV_0 \, : \, 
\begin{split} 
& f(0)=1,f \text{ is nonnegative and non-increasing on } [0,\infty), \\
& f(\sqrt{x}) \text{ is convex in } x,0\leq f_{l}(x) \leq f(x) \leq f_{u} \leq 1, \ f'(t_0)\leq -t_0^{\alpha-\frac32}
\end{split} }   \ . $$

Here and below, we use $f_-'$ and $f_+'$ to denote the left and the right derivatives of a function $f$. The function set $\scrV_1$ will act as the invariant set for our fixed-point method. The purpose of upperbound $f_u$ and lowerbound $f_l$ is to enforce suitable decay so that we establish desired results on the singular integral. The specific values of coefficients in $f_u, f_l$ and the choice of $t_0$ are actually determined through a bootstrap argument that will be clear later.

We remark that, though a function $f\in\scrV_1$ is not required to be differentiable, the one-sided derivatives $f_-'(x)$ and $f_+'(x)$ are both well defined at every point $x$ by the convexity of $f(\sqrt{x})$ in $x$. In what follows, we will abuse notation and simply use $f'(x)$ for $f'_-(x)$ and $f_+'(x)$ in both the weak and strong sense. For example, when we write $f'(x)\leq C$, we mean both  $f'_-(x) \leq C$ and $f'_+(x) \leq C$ at the same time. In this context, the non-increasing property of $f$ on $[0,\infty)$ can be represented as $f'\leq0$ when $x\geq0$.

The reason to choose this function set $\scrV_1$ is as follows: the assumption $f(0)=1$ is chosen by scaling invariance in \eqref{Equation: scaling invariant HP}. $f$ being nonnegative is a natural assumption in finding a simple formulation of a self-similar profile. It is also possible to search for a self-similar profile which changes signs but that will be more difficult to estimate. The monotonicity of $f$ ensures the kernel acts with a definite sign, prevents oscillations, makes many estimates easy. The square-root convexity, or convexity after reparameterization, is the key structural condition that makes the integral operator preserve shape, and it provides control of one-sided derivatives even without smoothness, which allows us to bootstrap regularity later. The properties of monotonicity of $f$ and convexity of $f(\sqrt{x})$ are also preserved by the map $\frR_\alpha$, see proofs in Proposition \ref{Proposition: RalphaConvexity}. The upperbound $f_u$, together with the lowerbound $f_l$ and the derivative bound $f'(t_0)\leq -t_0^{\alpha-\frac32}$, give us very delicate control in Lemma \ref{Lemma: bcbound}  on the values of key functionals $\frak{b}(f),\frak{c}(f):\scrV_1\to\bbR$ that will be defined afterwards. 

We note that the lowerbound $f_l$ here differs from $\max(0,1-x^2)$ in the previous section, and we impose an explicit upperbound $f_u$ with power-law decay. This is related to the half-plane mechanism which produces a profile with a long tail, so the function class must encode decay to make the operator well-defined and compactness possible. Another related heuristic idea is that the focusing-type blow-up that we are searching for in this section often corresponds to the case that the self-similar profile is not compactly supported. Overall, this power-law decay encoded in the upperbound $f_u$ makes it unnecessary to use the weight 
$\rho$ from the previous section, because the explicit tail control in the upperbound $f_l$ is sufficient to guarantee compactness of $\scrV_1$ on an unbounded domain. 


\subsubsection{ Introducing maps $\frT_\alpha(f)$ and $\frR_\alpha(f)$. }
To simplify the expression $V(x) = \frac{U}{x} + 1 - U'(0)$, we will separate some terms in the singular integral. We define a new operator $\frT_\alpha$ as follows 
\begin{equation}
\begin{split}
\frT_\alpha(f)(x) & =\frac{U}{x}-U'(0) =  \frac{2c_{0,\alpha}}{-2\alpha} \int_{\xi\in\bbR} \frac{\xi f(\xi) }{x|x-\xi|^{2\alpha}} - 2\alpha\frac{f(\xi)}{|\xi|^{2\alpha}} \, \idiff \xi \\
& = \frac{2c_{0,\alpha}}{-2\alpha} \int_{\xi>0} \frac{\xi f(\xi)}{x} \rbracket{|x-\xi|^{-2\alpha} - |x+\xi|^{-2\alpha} } - 4\alpha \frac{f(\xi)}{|\xi|^{2\alpha}} \, \idiff \xi \\
& = - 2c_{0,\alpha} \int_{\xi>0} f'(\xi) \xi^{1-2\alpha} F_{1,-2\alpha}(x/\xi) \, \idiff \xi  \ .
\end{split}
\end{equation}
where the definition of auxiliary functions $F_{1,\gamma}$ can be found in Appendix \ref{Section: Auxiliary functions}. This also gives the following equivalent expression for \eqref{Equation: fixedPointEqnHP}:
$$ f(x) =  \exp\rbracket{-\int_0^x \frac{\frT_\alpha(f)(y)}{y (1+\frT_\alpha(f)(y))} \, dy }  \ .$$

This definition of $\frT_\alpha$ only uses the integral on $[0,\infty)$ since $f\in\scrV_1\subseteq\scrV_0$  is an even function. We will always employ this symmetry property in the sequel. One should note that $\frT_\alpha$ is well-defined for any $f\in\scrV_1$, since for each fixed $x$, the kernel of $\frT_\alpha$ decays like $\xi^{-2-2\alpha}$ as $\xi\to\infty$.

From definition of $F_{1,-2\alpha}$ in Lemma \ref{Lemma: AuxiliaryFunctionF1}, one can show that $F_{1,-2\alpha}'(1/t) = t^{2+2\alpha} F_{1,-2\alpha}'(t), F_{1,-2\alpha}(0)=F_{1,-2\alpha}'(0)=0, \lim_{x\to+\infty} F_{1,-2\alpha}(x) = \frac{2}{1-2\alpha} , F_{1,-2\alpha}'(t) >0 $ on $t\in(0,\infty)$.

Then we know that
 \begin{equation} 
\begin{cases}
\frT_\alpha(f)(0) = 0\\
\mathfrak{b}(f) := - U'(0)= 4c_{0,\alpha} \int_{\xi>0} \frac{f(\xi)}{|\xi|^{2\alpha}}\, \idiff \xi \\
 \mathfrak{b}(f) = \lim_{x\to\infty} \frT_\alpha(f)(x) \\
 \frac{V(x)}{x} = \frT_\alpha(f)(x) + 1
     \end{cases} 
   \ . \end{equation}
    
    Let us define operator $\frR_\alpha$ by
$$ \frR_\alpha(f)(x) = \exp\rbracket{-\int_0^x \frac{\frT_\alpha(f)(y)}{y (1+\frT_\alpha(f)(y))} \, dy } \ .$$
    
    We now aim to study the fixed-point problem $f= \frR_\alpha(f), f\in\scrV_1$. As the core idea of this paper, the following proposition explains how a fixed-point of $\frR_\alpha$ is related to a solution to \eqref{Equation: SelfSimilarProfileOmegaHP}.

\begin{proposition}\label{Proposition: relation1Dmodel2DSQGHP} For any $\alpha\in(0,\frac12)$, if $f\in\scrV_1$ is a fixed-point of $\frR_\alpha$, i.e., $\frR_\alpha(f)=f$, then $f(+\infty)=0, \mathfrak{b}(f)<\infty$, and $\Theta=xf$ is a solution to \eqref{Equation: SelfSimilarProfileOmegaHP} with 
\begin{equation}\label{Equation: c_l HP} c_\theta=1, c_\ell  = 1- U'(0) = 1 + 4c_{0,\alpha} \int_{\xi>0} \frac{f(\xi)}{|\xi|^{2\alpha}}\, \idiff \xi \ .
\end{equation}
Moreover, let 
$ \lambda = \rbracket{2\alpha c_\ell -1}^{\frac{1}{1-2\alpha}} $
then $(\Theta_\lambda(x),\widetilde{c_\ell },\widetilde{c_\theta}) = (\lambda^{-1}\Theta(\lambda x), \lambda^{2\alpha-1} c_\ell , \lambda^{2\alpha-1} c_\theta) $ is a solution to \eqref{Equation: SelfSimilarProfileOmegaHP} with $\widetilde{c_\theta} - 2\alpha \widetilde{c_\ell } = -1$.

Conversely, if $(\Theta,\widetilde{c_\ell },\widetilde{c_\theta})$ is a solution to \eqref{Equation: SelfSimilarProfileOmegaHP}  with $\widetilde{c_\theta} - 2\alpha \widetilde{c_\ell } = -1$ such that $f=\Theta(x)/x$ is an even function of $x$, $f(x)\geq0$ and $f'(x)\leq0$ for $x\in[0,\infty)$, $f(0)=1$, $\lim_{x\to+\infty} f(x)=0$, then let $\lambda = \widetilde{c_\theta}^{\frac{1}{1-2\alpha}}$ and we renormalize $(f_\lambda(x), c_\ell , c_\theta) = (f(\lambda x), \widetilde{c_\ell }/ \widetilde{c_\theta}, 1)$, then $f_\lambda$ is a fixed-point of $\frR_\alpha$, and $c_\ell $ is related to $f_\lambda$ as in \eqref{Equation: c_l HP}.
\end{proposition}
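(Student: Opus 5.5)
The plan is to verify the two directions by direct computation from the logarithmic-derivative form of $\frR_\alpha$. Throughout, $V=1+\frT_\alpha(f)$, with $\frT_\alpha(f)=U/x-U'(0)$ as in the definition above.

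\textbf{Forward direction.} Let $f=\frR_\alpha(f)\in\scrV_1$.

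First, $f\le f_u$ and $f_u(x)=t_0^{\delta_u}x^{-\delta_u}$ for large $x$ give $f(+\infty)=0$ at once.

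Next, $\mathfrak b(f)=4c_{0,\alpha}\int_{\xi>0}f(\xi)\xi^{-2\alpha}\,d\xi<\infty$. Near $0$ the integrand is bounded by $\xi^{-2\alpha}$, which is integrable since $2\alpha<1$. Near $\infty$ it is bounded by $\xi^{-\delta_u-2\alpha}$, which is integrable because $\delta_u>1-2\alpha$.

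Then I check the sign of $\frT_\alpha(f)$. Since $\frT_\alpha(f)(x)=-2c_{0,\alpha}\int_{\xi>0}f'(\xi)\xi^{1-2\alpha}F_{1,-2\alpha}(x/\xi)\,d\xi$ with $f'\le0$ and $F_{1,-2\alpha}\ge0$ increasing, $\frT_\alpha(f)$ is nonnegative and nondecreasing. Hence $V=1+\frT_\alpha(f)\ge1$, so the integrand in $\frR_\alpha$ is well defined. It is also continuous and bounded by $O(1)$ near $y=0$, because $\frT_\alpha(f)(y)=O(y)$ by $F_{1,-2\alpha}(0)=F'_{1,-2\alpha}(0)=0$.

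Differentiating the fixed-point identity gives
\[
\frac{f'}{f}=-\frac{\frT_\alpha(f)}{x(1+\frT_\alpha(f))}.
\]
For $\Theta=xf$ this yields
\[
\frac{\Theta'}{\Theta}=\frac1x+\frac{f'}{f}=\frac{1}{x(1+\frT_\alpha(f))}=\frac{1}{xV}.
\]
With $c_\theta=1$ and $c_\ell=1-U'(0)=1+\mathfrak b(f)$, we have $c_\ell x+U(x)=xV(x)$, so this is exactly \eqref{Equation: SelfSimilarProfileOmegaHP}. The identity holds pointwise for $x>0$, and by oddness for all $x$.

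\textbf{Rescaling.} By \eqref{Equation: scaling invariant HP} with $\gamma=\lambda^{-1}$, both constants are multiplied by $\lambda^{2\alpha-1}$. Therefore
\[
\widetilde{c_\theta}-2\alpha\widetilde{c_\ell}=\lambda^{2\alpha-1}(1-2\alpha c_\ell),
\]
which equals $-1$ exactly when $\lambda^{1-2\alpha}=2\alpha c_\ell-1$. This requires $2\alpha c_\ell>1$, i.e. $\mathfrak b(f)>\frac{1-2\alpha}{2\alpha}$.

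This inequality is the one genuinely nontrivial point of the forward direction. I would derive it from the lower barrier $f\ge f_l$: insert it into $\mathfrak b$ and evaluate $\int(1+t_0^{-2}\xi^2)^{-\delta_l}\xi^{-2\alpha}\,d\xi$ by a Beta integral. That gives a lower bound of the form $c\,\Gamma(\delta_l+\alpha-\frac12)t_0^{1-2\alpha}/\Gamma(\delta_l)$, and the choices \eqref{Equation: t0}–\eqref{Equation: deltal} of $t_0,\delta_l$ were made precisely so that this dominates $\frac{1-2\alpha}{2\alpha}$. (This is the content of the forthcoming bound on $\mathfrak b,\mathfrak c$; if needed I would prove it directly here.)

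\textbf{Converse.} Let $(\Theta,\widetilde{c_\ell},\widetilde{c_\theta})$ be a solution with $f=\Theta/x$ as assumed.

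First I show $\widetilde{c_\theta}>0$. Expanding the equation at $x=0$ with $\Theta'(0)=1$ gives $\widetilde{c_\theta}=\widetilde{c_\ell}+U'(0)$. The equation can then be rewritten as
\[
\frac{f'}{f}=\frac{\widetilde{c_\theta}-\widetilde{V}(x)}{x\,\widetilde{V}(x)},\qquad \widetilde V(x)=\widetilde{c_\ell}+\frac{U(x)}{x}=\widetilde{c_\theta}+\frac{U(x)}{x}-U'(0).
\]
Monotonicity $f'\le0$ together with $f\to0$ forces $\widetilde V$ to be positive and eventually larger than $\widetilde{c_\theta}$. Combining this with $\widetilde{c_\theta}=2\alpha\widetilde{c_\ell}-1$ excludes $\widetilde{c_\theta}\le0$. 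I expect this sign argument to be the main obstacle in the converse and would handle it carefully.

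Next, with $\lambda=\widetilde{c_\theta}^{1/(1-2\alpha)}$, the rescaling $f_\lambda(x)=f(\lambda x)$ (i.e. $\gamma=\lambda^{-1}$) multiplies both constants by $\lambda^{2\alpha-1}$. This produces $c_\theta=1$ and $c_\ell=\widetilde{c_\ell}/\widetilde{c_\theta}$, with $f_\lambda(0)=1$ preserved. The identity $c_\theta=c_\ell+U'(0)$ then gives $c_\ell=1-U'(0)=1+\mathfrak b(f_\lambda)$, which is \eqref{Equation: c_l HP}.

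Finally, the log-derivative identity now reads $f_\lambda'/f_\lambda=-\frT_\alpha(f_\lambda)/(x(1+\frT_\alpha(f_\lambda)))$. Integrating from $0$ with $f_\lambda(0)=1$ gives $f_\lambda=\frR_\alpha(f_\lambda)$.
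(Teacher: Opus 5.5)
Your verification of the profile equation (the identity $\Theta'/\Theta=1/(xV)$ together with $c_\ell x+U=xV$) is fine, as are $f(+\infty)=0$ and $\mathfrak b(f)<\infty$ via $f_u$ with $\delta_u>1-2\alpha$. The renormalization in the converse is also fine, and all of this matches what the paper's one-line proof intends.

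\textbf{The gap.} It is the step you flag as nontrivial: $2\alpha c_\ell>1$, i.e.\ $\mathfrak b(f)>\frac{1-2\alpha}{2\alpha}$, without which $\lambda$ is undefined. This does \emph{not} follow from the lower envelope $f\ge f_l$. That envelope only gives $\mathfrak b(f)\ge b_\alpha'$. Set $K=\frac{2^{2\alpha}\Gamma(2+\alpha)\Gamma(\frac12-\alpha)}{3\pi\Gamma(1-\alpha)}$; the left inequality of \eqref{Equation: deltal} reads $t_0^{1-2\alpha}\le \Gamma(\delta_l+1)/\bigl(K\,\Gamma(\delta_l+\alpha+\frac12)\bigr)$. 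Inserting this into your Beta-integral value gives exactly
\[
b_\alpha'\le \frac{3}{1+\alpha}\cdot\frac{\delta_l}{\delta_l+\alpha-\frac12}.
\]
Condition \eqref{Equation: t0} forces $t_0$ to be large, and then \eqref{Equation: deltal} forces $\delta_l$ to be large. So the right side is essentially $\frac{3}{1+\alpha}$, which lies below $\frac{1-2\alpha}{2\alpha}$ for small $\alpha$. For example, at $\alpha=0.1$ one needs $t_0>10^2$, hence $\delta_l\gtrsim 700$, and the bound is about $2.7$ against the required $4$. The right inequality of \eqref{Equation: deltal} gives $b_\alpha'>\frac{3}{2(1+\alpha)}$, so your route does close for $\alpha\gtrsim 0.23$, but not on the whole range. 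The parameters were not tuned for this, and Lemma~\ref{Lemma: bcbound} does not supply it.

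\textbf{How the paper gets it.} The paper proves the inequality later, in the proposition showing the blow-up is of focusing type, from the fixed-point equation itself with no parameter tuning. Since $0\le\frT_\alpha(f)\le\mathfrak b(f)$, one has $1-\frac{1}{1+\frT_\alpha(f)}\le\frac{\mathfrak b(f)}{1+\mathfrak b(f)}$, while the integrand of $\frR_\alpha$ is integrable on $(0,1)$ because $\frT_\alpha(f)(y)=O(y^2)$. So $f=\frR_\alpha(f)$ gives $f(x)\ge C\,x^{-\mathfrak b/(1+\mathfrak b)}$ for $x\ge 1$. Finiteness of $\mathfrak b(f)=4c_{0,\alpha}\int_0^\infty f(\xi)\xi^{-2\alpha}\,d\xi$ then forces $\frac{\mathfrak b}{1+\mathfrak b}+2\alpha>1$, which is $\mathfrak b(f)>\frac{1-2\alpha}{2\alpha}$.

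If you want a bound valid on all of $\scrV_1$, use the derivative constraint $f'(t_0)\le -t_0^{\alpha-\frac32}$ rather than $f_l$. It yields $\mathfrak b(f)\ge\frT_\alpha(f)(t_0)\gtrsim t_0^{\frac12-\alpha}$, which exceeds the threshold once $t_0$ is large.

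\textbf{The converse.} Your sign argument is vaguer than needed, and the relation $\widetilde{c_\theta}=2\alpha\widetilde{c_\ell}-1$ plays no role in it. Write $\widetilde V=\widetilde{c_\theta}+\frT_\alpha(f)$ with $\frT_\alpha(f)>0$ on $(0,\infty)$; the equation then reads $f'/f=-\frT_\alpha(f)/(x\widetilde V)$.
\begin{itemize}
\item If $\widetilde{c_\theta}<0$, then $f'>0$ near $0$, a contradiction.
\item If $\widetilde{c_\theta}=0$, then $x\frT_\alpha(f)\Theta'=0$, so $\Theta\equiv0$ on $[0,\infty)$, contradicting $f(0)=1$.
\end{itemize}
Once $\widetilde{c_\theta}>0$, the same formula shows $\log f$ stays finite, i.e.\ $f>0$. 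You need this before integrating the log-derivative.
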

\begin{proof} The first statement follows directly from the construction of $\frR_\alpha$. The claim that $\mathfrak{b}(f)<\infty$ provided $f=\frR_\alpha(f)$ will be proved in Lemma \ref{Lemma: bcbound} below. The formulas for $c_\theta$ and $c_\ell $ are formed by construction. Conversely, if $\Theta=xf$ is a solution to equations \eqref{Equation: SelfSimilarProfileOmegaHP} then we can utilize \eqref{Equation: scaling invariant HP} to rescale $\Theta$  so that \eqref{Equation: c_l HP} holds.
\end{proof}


\subsubsection{ Roadmap for proofs }
The remainder of this section is devoted to proving the existence of fixed points of $\frR_\alpha$ on $\scrV_1$. Similar to the procedure in the previous section, we want that 
\begin{enumerate}
\item $\scrV_1$ is nonempty, convex, and closed in the underlying Banach topology;
\item $\scrV_1$ is a compact set in this Banach space; 
\item $\frak{R}_\alpha$ maps $\scrV_1$ continuously into itself in the same topology;
\end{enumerate}
 We note that (1) is automatically satisfied by the design of $\scrV_1$. To establish (2) and (3), it is crucial to observe that the intermediate linear map $\frT_\alpha$ preserves monotonicity and square-root-convexity/concavity on $[0,\infty)$, which can be proven by applying integration by parts to the formula of $\frT_\alpha$. This monotonicity and convexity preserving property of $\frT_\alpha$ then passes on to the non-linear map $\frR_\alpha$  through some straightforward calculations of derivatives, which provide powerful controls on $\frT_\alpha$. Moreover, the monotonicity and convexity properties lead to the continuity of $\frT_\alpha:\scrV_1\to\scrV_1$ and the compactness of $\scrV_1$ in the $L^\infty$-topology. Finally, with all these ingredients in hand, we can apply the Schauder fixed-point theorem on $\frR_\alpha:\scrV_1\to\scrV_1$ to conclude the proof.
    

\subsubsection{Properties of $\frT_\alpha$ and $\frR_\alpha$ } We now turn to study the intermediate maps $\frT_\alpha$ and $\frR_\alpha$. As an important observation in our fixed-point method, they preserve the monotonicity and convexity/concavity of functions in $\scrV_1$.
\begin{proposition}
For any $\alpha\in(0,\frac12)$, $\frT_\alpha(f)(\sqrt{x})$ is concave on $[0,\infty)$ for $f\in\scrV_1$.
\end{proposition}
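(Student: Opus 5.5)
Concavity of $\frT_\alpha(f)(\sqrt{x})$ on $[0,\infty)$ is equivalent to $\bigl(\frT_\alpha(f)'(x)/x\bigr)'\leq 0$ on $(0,\infty)$. I would prove this by mirroring the computation in Proposition \ref{Proposition:RaV1R2}, with $\gamma=-2\alpha$ in place of $1-2\alpha$. The sign flips because $\frT_\alpha$ carries the prefactor $-2c_{0,\alpha}<0$, whereas $\scrT_\alpha$ carries $+c_{1,\alpha}$.

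\textbf{Step 1: differentiate.} Starting from
$\frT_\alpha(f)(x)=-2c_{0,\alpha}\int_{\xi>0} f'(\xi)\xi^{1-2\alpha}F_{1,-2\alpha}(x/\xi)\idiff\xi$,
I differentiate under the integral and obtain
$$\frac{\frT_\alpha(f)'(x)}{x}=-2c_{0,\alpha}\int_{\xi>0}\frac{f'(\xi)}{\xi}\,\frac{\xi^{2-2\alpha}}{x}F_{1,-2\alpha}'(x/\xi)\idiff\xi .$$

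\textbf{Step 2: integrate by parts.} I write $\frac{\xi^{2-2\alpha}}{x}F_{1,-2\alpha}'(x/\xi)=\partial_\xi\bigl(\xi^{2-2\alpha}F_{2,-2\alpha}(x/\xi)\bigr)+\kappa\,\xi^{1-2\alpha}$. Here $F_{2,\gamma}$ is the auxiliary function from the appendix. The constant $\kappa$ (the analogue of $\frac{2(2-\gamma)}{3}$) is chosen so that the boundary term at $\xi\to\infty$ vanishes, and it does not depend on $x$. Integrating by parts against $f'(\xi)/\xi$ gives
$$\frac{\frT_\alpha(f)'(x)}{x}=2c_{0,\alpha}\int_{\xi>0}\Bigl(\frac{f'(\xi)}{\xi}\Bigr)'\xi^{2-2\alpha}F_{2,-2\alpha}(x/\xi)\idiff\xi+\text{const}(f).$$
Since $(f'(\xi)/\xi)'\geq0$ is only a measure, I would read this term as a Stieltjes integral. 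Differentiating in $x$ then gives
$$\Bigl(\frac{\frT_\alpha(f)'(x)}{x}\Bigr)'=-2c_{0,\alpha}\int_{\xi>0}\Bigl(\frac{f'(\xi)}{\xi}\Bigr)'\xi^{1-2\alpha}F_{2,-2\alpha}'(x/\xi)\idiff\xi .$$
(The overall sign must be tracked carefully; the key point is that it is the opposite of the $\scrT_\alpha$ case.)

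\textbf{Step 3: sign.} The factor $(f'/\xi)'$ is nonnegative by the square-root convexity of $f\in\scrV_1$. If $F_{2,-2\alpha}'\geq 0$, the right-hand side is $\leq 0$, which is the claimed concavity.

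\textbf{Main obstacle.} The hard part is justifying the integration by parts for $\gamma=-2\alpha$. I need the boundary terms to vanish at $\xi\to0$ and at $\xi\to\infty$, and I need the sign $F_{2,-2\alpha}'>0$. The behaviour near $\xi\to0$ is controlled by $|f'(\xi)/\xi|\leq C$ (as in Lemma \ref{Lemma: CompactSetV1}), together with the small-argument expansion $F_{1,-2\alpha}(t)=O(t^2)$. The behaviour at infinity uses $f\leq f_u$ together with the decay $F_{2,-2\alpha}(x/\xi)\sim (x/\xi)^{\cdot}$ from the appendix (Lemma \ref{Lemma: AuxiliaryFunctionF1} and its $F_2$ analogue). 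Positivity of $F_{2,-2\alpha}'$ should follow from its explicit formula, or from the symmetry $F'(1/t)=t^{2+2\alpha}F'(t)$ together with a check on $(0,1)$. I would verify this directly, and if the appendix supplies it only for $\gamma\in(-1,1)$, extend it to $\gamma=-2\alpha\in(-1,0)$.
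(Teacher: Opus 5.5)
Your strategy is the paper's proof. You reduce to monotonicity of $\frT_\alpha(f)'(x)/x$, integrate by parts so that $F_{1,-2\alpha}'$ becomes $F_{2,-2\alpha}$ and the derivative lands on $f'(\xi)/\xi$, and conclude from $(f'(\xi)/\xi)'\ge0$, $F_{2,-2\alpha}'>0$ and the negative prefactor $-2c_{0,\alpha}$. However, Step 2 as written does not produce your final display. Since the statement is nothing but a sign, this has to be fixed rather than left as ``track carefully''.

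First, the powers of $\xi$ are off by one. Differentiating $\xi^{1-2\alpha}F_{1,-2\alpha}(x/\xi)$ in $x$ gives $\xi^{-2\alpha}F_{1,-2\alpha}'(x/\xi)$, so the kernel in Step 1 is $\xi^{1-2\alpha}x^{-1}F_{1,-2\alpha}'(x/\xi)$. With $\xi^{2-2\alpha}$ your decomposition does not match the $F_{2,\gamma}$ ODE at all; it leaves an extra $\xi^{1-2\alpha}F_{2,-2\alpha}$ term.

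Second, the decomposition has the wrong sign. The appendix ODE with $\gamma=-2\alpha$ gives
\[
\frac{\xi^{1-2\alpha}}{x}F_{1,-2\alpha}'(x/\xi)=-\partial_\xi\bigl(\xi^{1-2\alpha}F_{2,-2\alpha}(x/\xi)\bigr)+\frac{4(1+\alpha)}{3}\,\xi^{-2\alpha}.
\]
With your $+\partial_\xi$ you correctly obtain the intermediate term $+2c_{0,\alpha}\int(f'/\xi)'(\cdots)F_{2,-2\alpha}$. Differentiating that in $x$ gives $(\frT_\alpha(f)'/x)'\ge0$, i.e.\ convexity, so your last display silently flips the sign. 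With the correct identity, integration by parts gives
\[
\frac{\frT_\alpha(f)'(x)}{x}=-2c_{0,\alpha}\int_{\xi>0}\Bigl(\frac{f'(\xi)}{\xi}\Bigr)'\xi^{1-2\alpha}F_{2,-2\alpha}(x/\xi)\idiff\xi+\text{(constant in $x$)}.
\]
Hence $(\frT_\alpha(f)'(x)/x)'=-2c_{0,\alpha}\int_{\xi>0}(f'(\xi)/\xi)'\,\xi^{-2\alpha}F_{2,-2\alpha}'(x/\xi)\idiff\xi\le0$, which is exactly the paper's formula.

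There are also some smaller points.
\begin{itemize}
\item The boundary term at $\xi\to0$ is the regime $t=x/\xi\to\infty$, not small $t$. There $F_{2,-2\alpha}(t)\to\frac{4(1+\alpha)}{3(1-2\alpha)}$, so $\frac{f'(\xi)}{\xi}\,\xi^{1-2\alpha}F_{2,-2\alpha}(x/\xi)=O(\xi^{1-2\alpha})\to0$. This uses that $f'/\xi$ is bounded on $\scrV_1$ and that $\alpha<\frac12$.
\item At $\xi\to\infty$ you need only $F_{2,-2\alpha}(t)=O(t^2)$; the upper bound $f_u$ is not needed.
\item Positivity of $F_{2,-2\alpha}'$ is already in the appendix lemma on $F_{2,\gamma}$, which the paper simply cites, so there is nothing to extend. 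For $\gamma=-2\alpha<1$ the Taylor coefficients on $(0,1)$ are positive. The range $t>1$ is covered by $F_{2,\gamma}'(1/t)=t^{4-\gamma}F_{2,\gamma}'(t)$; note that $t^{2+2\alpha}$ is the $F_1$ symmetry, not the $F_2$ one.
\item Your Stieltjes remark is a worthwhile refinement. Since $(f'/\xi)'$ is a nonnegative measure and $x\mapsto F_{2,-2\alpha}(x/\xi)$ is continuous and nondecreasing, the display above shows directly that $\frT_\alpha(f)'(x)/x$ is non-increasing. This avoids differentiating under the integral, where $F_{2,-2\alpha}'$ has an integrable singularity $|1-t|^{-2\alpha}$ at $t=1$ and $(f'/\xi)'$ may have atoms.
\end{itemize}
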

\begin{proof}
It suffices to show that $\rbracket{\frac{\frT_\alpha(f)'(x)}{ x}}' \leq 0$ on $(0,\infty)$. Indeed, we can compute that
\begin{equation}
\begin{split} \label{Equation: convexity}
& {\frac{\frT_\alpha(f)'(x)}{ x}} \\
 = &  -2c_{0,\alpha} \int_{\xi>0} \frac{f'(\xi)}{\xi} \frac{\xi^{1-2\alpha}}{x} F_{1,-2\alpha}'(x/\xi)\, \idiff \xi \\
 = &- 2c_{0,\alpha} \int_{\xi>0} \frac{f'(\xi)}{\xi} \rbracket{ \frac{\xi^{1-2\alpha}}{x} F_{1,-2\alpha}'(x/\xi) + \frac{8\alpha(1+\alpha)}{3}\xi^{-2\alpha} }\, \idiff \xi \\
 &+ \frac{8\alpha(1+\alpha)}{3} c_{0,\alpha}'\int_{\xi>0} \frac{f'(\xi)}{\xi^{1+2\alpha}} \, \idiff \xi \\ 
 = & -2c_{0,\alpha} \int_{\xi>0} \rbracket{ \frac{f'(\xi)}{\xi} }' \xi^{1-2\alpha} F_{2,-2\alpha}(x/\xi) \, \idiff \xi + \mathfrak{c}(f) \ ,
\end{split} 
\end{equation}
where  constant $\mathfrak{c}(f):=  \frac{8(1+\alpha)}{3} c_{0,\alpha} \int_{\xi>0} \frac{f'(\xi)}{\xi^{1+2\alpha}} \, \idiff \xi $ has the following properties:
\begin{equation}
 \mathfrak{c}(f) = -\frac{8(1+2\alpha)(1+\alpha)}{3}c_{0,\alpha} \int_{\xi>0} \frac{1-f(\xi)}{\xi^{2+2\alpha}}\, \idiff \xi = \lim_{x\to 0} {\frac{\frT_\alpha(f)'(x)}{ x}} \ ,
\end{equation}
and the definition and properties of the auxiliary function $F_{2,-2\alpha}$ are left as  Lemma \ref{Lemma: Auxiliary Function2} in Appendix \ref{Section: Auxiliary functions}. In particular, we have $F_{2,-2\alpha}'(x/\xi)\geq0$. As a consequence, we can compute that 
$$\rbracket{\frac{\frT_\alpha(f)'(x)}{ x}}'   = -2c_{0,\alpha} \int_{\xi>0} \rbracket{ \frac{f'(\xi)}{\xi} }' \xi^{-2\alpha} F_{2,-2\alpha}'(x/\xi) \, \idiff \xi \leq 0 \ , $$
which completes the proof that $\frT_\alpha(f)(\sqrt{x})$ is concave on $(0,\infty)$.\end{proof}

The concavity of $\frT_\alpha(f)(\sqrt{x})$ gives us the following corollary:
\begin{corollary}\label{Corollary: Concavity_T} For $f\in\scrV_1$, we have
\begin{equation*}
\begin{split}
&  \frac{\frT_\alpha(f)(x)}{x^2} \geq \frac{\frT_\alpha(f)'(x)}{2x} \ , \\
& \rbracket{\frac{\frT_\alpha(f)(x)}{x^2}}' \leq 0  \ , \\
&  \frac{\frT_\alpha(f)(x)}{x^2} \leq \lim\limits_{x\to 0} \frac{\frT_\alpha(f)'(x)}{2x}= \frac{\frak{c}(f)}{2} \ , \\
& \frT_\alpha(f)(x) \leq \min\rbracket{ \frak{b}(f), \frac{\frak{c}(f)}{2}x^2} \ .
\end{split} 
\end{equation*}
\end{corollary}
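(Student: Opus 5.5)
The plan is to derive everything from the concavity of $h(s):=\frT_\alpha(f)(\sqrt{s})$ on $[0,\infty)$ established in the preceding proposition, together with $h(0)=\frT_\alpha(f)(0)=0$, $\lim_{s\to\infty}h(s)=\frak{b}(f)$, and the identity $\frak{c}(f)=\lim_{x\to 0}\frT_\alpha(f)'(x)/x$ from \eqref{Equation: convexity}. Writing $s=x^2$, we have $h'(s)=\frT_\alpha(f)'(x)/(2x)$ and $h(s)/s=\frT_\alpha(f)(x)/x^2$, so each of the four inequalities is a standard fact about a concave function vanishing at the origin, translated back to the $x$ variable. One-sided derivatives are understood as in the paper's convention, which suffices for concave $h$.

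First, concavity and $h(0)=0$ give $h(s)=\int_0^s h'(\tau)\idiff\tau\geq s\,h'(s)$, because $h'$ is non-increasing. Equivalently, $h(s)-h(0)\geq h'(s)(s-0)$ by the supporting-line property at $s$. Dividing by $s=x^2$ yields the first inequality $\frT_\alpha(f)(x)/x^2\geq \frT_\alpha(f)'(x)/(2x)$. Next, $(h(s)/s)'=(s h'(s)-h(s))/s^2\leq 0$ by the first step, and since $s\mapsto x^2$ is increasing this gives $(\frT_\alpha(f)(x)/x^2)'\leq 0$. This is the second inequality.

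For the third, the monotonicity of $h(s)/s$ implies $h(s)/s\leq\lim_{s\to0^+}h(s)/s=h'(0^+)$. To identify this, we use $h'(0^+)=\lim_{x\to0}\frT_\alpha(f)'(x)/(2x)=\frak{c}(f)/2$ from \eqref{Equation: convexity}. The one delicate point is to justify that this limit exists and equals $h'(0^+)$, that is, to pass the limit $x\to 0$ inside the integral $-2c_{0,\alpha}\int(f'/\xi)'\xi^{1-2\alpha}F_{2,-2\alpha}(x/\xi)\idiff\xi$. I would handle this by monotone convergence: the integrand has a sign, $F_{2,-2\alpha}$ is monotone, and $F_{2,-2\alpha}(0)=0$ by the appendix lemma. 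A mean-value argument for concave $h$, namely $h(s)/s$ lying between $h'(s)$ and $h'(0^+)$, then pins down $\lim h(s)/s$. This limit may be finite or $-\infty$; the bound holds in either case. Note that $\frak{c}(f)\le 0$.

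Finally, $\frT_\alpha(f)(x)\leq \frak{c}(f)x^2/2$ is the third inequality multiplied by $x^2$. The bound $\frT_\alpha(f)\leq\frak{b}(f)$ follows because $h$ is concave and has finite limit $\frak{b}(f)$ at infinity, which forces $h$ to be non-decreasing. Indeed, if $h'(s_0)<0$ at some point, then concavity would send $h\to-\infty$. Hence $h(s)\leq\lim h=\frak{b}(f)$. Combining the two bounds gives the minimum. The main obstacle is the limit justification in the third step; the remaining steps are elementary consequences of concavity.
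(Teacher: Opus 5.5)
Your route is the paper's. The corollary is stated there without a separate proof, as a direct reading of three facts: the concavity of $h(s)=\frT_\alpha(f)(\sqrt{s})$, the values $h(0)=0$ and $h(s)\to\mathfrak{b}(f)$, and the value of $\lim_{x\to0}\frT_\alpha(f)'(x)/x$. Your arguments for the first, second and fourth inequalities are correct, including the observation that a concave $h$ with a finite limit at infinity is non-decreasing.

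The problem is the constant in the third step. Your remark that $\mathfrak{c}(f)\le 0$ is incompatible with the identity $\lim_{x\to0}\frT_\alpha(f)'(x)/(2x)=\mathfrak{c}(f)/2$ that you are proving. Since $f'\le0$ on $(0,\infty)$ and $F_{1,-2\alpha}'>0$, the formula $\frT_\alpha(f)'(x)=-2c_{0,\alpha}\int_{\xi>0}f'(\xi)\xi^{-2\alpha}F_{1,-2\alpha}'(x/\xi)\idiff\xi$ gives $\frT_\alpha(f)'\ge0$. Using $\lim_{t\to0}F_{1,\gamma}'(t)/t=\frac{2(2-\gamma)}{3}$ with $\gamma=-2\alpha$, one finds
\[
\lim_{x\to0}\frac{\frT_\alpha(f)'(x)}{x}=-\frac{8(1+\alpha)}{3}c_{0,\alpha}\int_{\xi>0}\frac{f'(\xi)}{\xi^{1+2\alpha}}\idiff\xi=\frac{8(1+\alpha)(1+2\alpha)}{3}c_{0,\alpha}\int_{\xi>0}\frac{1-f(\xi)}{\xi^{2+2\alpha}}\idiff\xi>0 .
\]
The printed definition of $\mathfrak{c}(f)$ and the constant term in \eqref{Equation: convexity} carry a sign slip. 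The positive value is the one used later in Lemma \ref{Lemma: bcbound} and Proposition \ref{Proposition: Lowerbound F2}.

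If $\mathfrak{c}(f)\le0$ were right, your fourth inequality together with $\frT_\alpha(f)\ge0$ would force $\frT_\alpha(f)\equiv0$. That contradicts $\frT_\alpha(f)\to\mathfrak{b}(f)>0$. So you have to compute this constant yourself rather than import it from \eqref{Equation: convexity}.

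With the correct sign the argument also simplifies. Write
\[
\frac{\frT_\alpha(f)'(x)}{x}=\mathfrak{c}(f)-2c_{0,\alpha}\int_{\xi>0}(f'/\xi)'\xi^{1-2\alpha}F_{2,-2\alpha}(x/\xi)\idiff\xi .
\]
The last term is nonpositive, by convexity of $f(\sqrt{\cdot})$ and $F_{2,-2\alpha}\ge0$. Hence $h'(s)\le\mathfrak{c}(f)/2$ for every $s>0$, and $h(s)\le s\,\mathfrak{c}(f)/2$ follows with no limit interchange. Your monotone-convergence step is needed only to identify the limit, and it is fine.

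Finally, the possible degenerate value of $h'(0^+)$ for a concave $h$ is $+\infty$, not $-\infty$. Here it is excluded because $1-f\le 1-f_l\le \delta_l t_0^{-2}\xi^2$, which makes $\mathfrak{c}(f)$ finite.
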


\begin{proposition}\label{Proposition: RalphaConvexity} For any $\alpha\in(0,\frac12)$, $\frR_\alpha(f)(x)$ is non-increasing and $\frR_\alpha(f)(\sqrt{x})$ is convex on $(0,\infty)$ for any $f\in\scrV_1$.
\end{proposition}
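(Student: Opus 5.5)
The plan is to work directly from the explicit formula for $\frR_\alpha(f)$, writing $T:=\frT_\alpha(f)$ for brevity:
$$\frR_\alpha(f)(x)=\exp\rbracket{-\int_0^x \frac{T(y)}{y(1+T(y))}\,dy}.$$
Both claims then reduce to sign and monotonicity properties of $T$, and these are essentially already available from Corollary \ref{Corollary: Concavity_T} together with the kernel representation of $\frT_\alpha$.

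\textbf{Step 1: $T$ is nonnegative and non-decreasing on $[0,\infty)$.} Differentiate under the integral in $T(x)=-2c_{0,\alpha}\int_{\xi>0} f'(\xi)\xi^{1-2\alpha}F_{1,-2\alpha}(x/\xi)\,d\xi$. This gives
$$T'(x)=-2c_{0,\alpha}\int_{\xi>0} f'(\xi)\xi^{-2\alpha}F_{1,-2\alpha}'(x/\xi)\,d\xi .$$
The integrand has a definite sign because $f'\le 0$ on $(0,\infty)$, $F_{1,-2\alpha}'>0$ and $c_{0,\alpha}>0$, so $T'\ge0$. Together with $T(0)=0$ this gives $T\ge0$, hence $1+T\ge1$ and the integrand defining $\frR_\alpha(f)$ is well defined and nonnegative. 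Near $y=0$ it is integrable, since $T(y)\le \frac{\frak c(f)}{2}y^2$ by Corollary \ref{Corollary: Concavity_T}.

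The justification of differentiation under the integral is the only mildly technical point. I would handle it using that $F_{1,-2\alpha}'$ is continuous with the decay recorded after the definition of $\frT_\alpha$, namely $F_{1,-2\alpha}'(1/t)=t^{2+2\alpha}F_{1,-2\alpha}'(t)$, and that $|f'|$ is locally bounded by the square-root convexity. I expect this step, rather than any structural issue, to be the main obstacle. Monotonicity of $\frR_\alpha(f)$ then follows at once, since
$$\frR_\alpha(f)'(x)=-\frR_\alpha(f)(x)\,\frac{T(x)}{x(1+T(x))}\le0 .$$

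\textbf{Step 2: convexity of $\frR_\alpha(f)(\sqrt{x})$.} As in Proposition \ref{Proposition:RaV1R2}, this is equivalent to $\bigl(\frR_\alpha(f)'(x)/x\bigr)'\ge0$ on $(0,\infty)$. Set
$$h(x):=\frac{T(x)}{x^2}\cdot\frac{1}{1+T(x)}, \qquad\text{so that}\qquad \frac{\frR_\alpha(f)'}{x}=-\frR_\alpha(f)\,h \quad\text{and}\quad \frR_\alpha(f)'=-x\,\frR_\alpha(f)\,h .$$
Then
$$\rbracket{\frac{\frR_\alpha(f)'}{x}}'=-\frR_\alpha(f)'\,h-\frR_\alpha(f)\,h' = \frR_\alpha(f)\rbracket{x h^2-h'} .$$
Since $\frR_\alpha(f)>0$, it suffices to show $h'\le0$.

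For this, note that $T/x^2$ is nonnegative (Step 1) and non-increasing (Corollary \ref{Corollary: Concavity_T}). The factor $1/(1+T)$ is positive and non-increasing because $T$ is non-decreasing. A product of two nonnegative non-increasing functions is non-increasing, so $h'\le0$ (in the one-sided sense if needed), which gives the claimed convexity.
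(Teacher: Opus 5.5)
Your argument is correct and is essentially the paper's proof: you get $\frT_\alpha(f)\geq 0$ and $\frT_\alpha(f)'\geq 0$ from the sign of the kernel, then write $\frR_\alpha(f)'/x$ as minus a product of nonnegative non-increasing factors using Corollary \ref{Corollary: Concavity_T}. The paper groups these factors as $\frac{\frT_\alpha(f)}{x^2}\cdot\frac{\frR_\alpha(f)}{1+\frT_\alpha(f)}$ and you as $\frR_\alpha(f)\cdot h$, which makes no difference. One correction to your technical aside: $F_{1,-2\alpha}'$ is not continuous, since it behaves like $|1-t|^{-2\alpha}/(2\alpha)$ near $t=1$; however, because $-f'(\xi)\xi^{-2\alpha}F_{1,-2\alpha}'(s/\xi)\geq 0$, Tonelli gives $\frT_\alpha(f)(x)=-2c_{0,\alpha}\int_0^x\int_{\xi>0}f'(\xi)\xi^{-2\alpha}F_{1,-2\alpha}'(s/\xi)\,\mathrm{d}\xi\,\mathrm{d}s$ directly, so monotonicity of $\frT_\alpha(f)$ needs no differentiation under the integral sign.
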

\begin{proof}
The fact that $\frR_\alpha(f)(x)$ is non-increasing on $(0,\infty)$ can be shown directly by
\begin{equation}\label{Equation: Derivative R}
 \frR_\alpha(f)'(x) =  -  \frac{\frR_\alpha(f)(x) \frT_\alpha(f)(x)}{x (1+\frT_\alpha(f)(x))}  \leq 0 \ .
 \end{equation}
 Moreover, from
 $$  \frT_\alpha(f)'(x) = -2c_{0,\alpha} \int_{\xi>0} f'(\xi) \xi^{-2\alpha} F_{1,-2\alpha}'(x/\xi) \, \idiff \xi \ .
$$ 
one can conclude that $\frT_\alpha(f), \frT_\alpha'(f)\geq0$ on $[0,\infty)$ for any $f\in\scrV_1$. 

We observe that 
$$ \rbracket{\frac{\frT_\alpha(f)(x)}{x^2}}' \leq 0, \ \ \ \ \    \rbracket{\frac{\frR_\alpha(f)(x)}{1+\frT_\alpha(f)(x)} }' \leq 0 \ .  $$
As a consequence, we obtain
$$ \rbracket{\frac{\frR_\alpha(f)'(x)}{x}}' =  -\rbracket{\frac{\frT_\alpha(f)(x)}{x^2}}' \frac{\frR_\alpha(f)(x)}{1+\frT_\alpha(f)(x)} - {\frac{\frT_\alpha(f)(x)}{x^2}} \rbracket{\frac{\frR_\alpha(f)(x)}{1+\frT_\alpha(f)(x)} }' \geq0 \ ,
$$
and hence  $\frR_\alpha(f)(\sqrt{x})$ is convex on $(0,\infty)$.
\end{proof}

\begin{lemma}\label{Lemma: bcbound} 
For any $f\in\scrV_1$, the following inequality holds for constant $\mathfrak{b}(\Omega)$ and $\mathfrak{c}(f)$:
 \begin{equation*}
\begin{split}
& b_\alpha'' \geq \mathfrak{b}(f) \geq b_\alpha', \ \  c_{0,\alpha}'' \geq \mathfrak{c}(f) \geq  \frac{8(1+\alpha)}{3(1-2\alpha)}c_{0,\alpha}  t_0^{-\frac32-\alpha}  \ , \\
& b_\alpha'':= 4 c_{0,\alpha} t_0^{1-2\alpha} \rbracket{\frac{1}{1-2\alpha} + \frac{1}{\delta_u+2\alpha-1}} \ , \\
& b_\alpha':=2c_{0,\alpha} t_0^{1-2\alpha} \frac{\Gamma(\frac12-\alpha)\Gamma(\alpha+\delta_l-\frac12)}{\Gamma(\delta_l)} \ , \\
& c_{0,\alpha}'' := c_{0,\alpha} t_0^{-1-2\alpha} \frac{8(1+\alpha)\Gamma(\frac12-\alpha)\Gamma(\delta_l+\alpha+\frac12)}{3 \Gamma(\delta_l)} \ .
 \end{split} 
 \end{equation*}
 \end{lemma}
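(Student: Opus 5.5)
Both bounds rest on the same idea. The functionals
\[
\mathfrak{b}(f)=4c_{0,\alpha}\int_{\xi>0}f(\xi)\xi^{-2\alpha}\idiff\xi
\quad\text{and}\quad
-\mathfrak{c}(f)=\frac{8(1+2\alpha)(1+\alpha)}{3}c_{0,\alpha}\int_{\xi>0}\frac{1-f(\xi)}{\xi^{2+2\alpha}}\idiff\xi
\]
are monotone in $f$: the first increases with $f$, the second decreases with $f$. So the pointwise envelope $f_l\le f\le f_u$ built into $\scrV_1$ gives two of the four inequalities at once. The remaining inequality on $\mathfrak{c}$ uses the derivative condition $f'(t_0)\le -t_0^{\alpha-\frac32}$ together with the convexity of $f(\sqrt{x})$.

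\emph{Bounds on $\mathfrak{b}(f)$.} For the upper bound I use $f\le f_u$ and split at $t_0$:
\[
\int_0^{t_0}\xi^{-2\alpha}\idiff\xi+\int_{t_0}^\infty t_0^{\delta_u}\xi^{-\delta_u-2\alpha}\idiff\xi=\frac{t_0^{1-2\alpha}}{1-2\alpha}+\frac{t_0^{1-2\alpha}}{\delta_u+2\alpha-1}.
\]
The second integral converges because $\delta_u>1-2\alpha$. Multiplying by $4c_{0,\alpha}$ gives $b_\alpha''$. For the lower bound I use $f\ge f_l$ and substitute $\xi=t_0 s$, then $s^2=u$. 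This reduces the integral to a Beta function:
\[
\int_0^\infty (1+t_0^{-2}\xi^2)^{-\delta_l}\xi^{-2\alpha}\idiff\xi=\tfrac12 t_0^{1-2\alpha}B\bigl(\tfrac12-\alpha,\delta_l+\alpha-\tfrac12\bigr).
\]
Multiplying by $4c_{0,\alpha}$ gives exactly $b_\alpha'$. In particular $\mathfrak{b}(f)<\infty$, which is the fact needed in Proposition \ref{Proposition: relation1Dmodel2DSQGHP}.

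\emph{Bounds on $\mathfrak{c}(f)$.} I read the statement as bounding $|\mathfrak{c}(f)|=-\mathfrak{c}(f)\ge0$, since $\mathfrak{c}(f)\le0$. The upper bound uses $1-f\le 1-f_l$ and the same Beta substitution:
\[
\int_0^\infty\frac{1-(1+t_0^{-2}\xi^2)^{-\delta_l}}{\xi^{2+2\alpha}}\idiff\xi .
\]
I would evaluate this by integrating by parts once, which moves the derivative onto $f_l$, equivalently uses the representation $\frac{8(1+\alpha)}3c_{0,\alpha}\int f'\xi^{-1-2\alpha}$. This gives
\[
\frac{2\delta_l}{(1+2\alpha)t_0^2}\int_0^\infty\xi^{-2\alpha}(1+t_0^{-2}\xi^2)^{-\delta_l-1}\idiff\xi=\frac{\delta_l\,t_0^{-1-2\alpha}}{1+2\alpha}B\bigl(\tfrac12-\alpha,\delta_l+\alpha+\tfrac12\bigr).
\]
Simplifying with $\delta_l\Gamma(\delta_l)=\Gamma(\delta_l+1)$ and the prefactor should reproduce $c_{0,\alpha}''$. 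I expect only bookkeeping of Gamma factors here; if a factor is off, a crude bound by $\Gamma(\delta_l+1)\ge\Gamma(\delta_l)$-type comparisons still gives an upper bound of the stated form.

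\emph{Main obstacle.} The hardest step is the lower bound on $|\mathfrak{c}(f)|$, which must come from the derivative condition. For $\xi\ge t_0$, convexity of $f(\sqrt{\cdot})$ makes $f'(\xi)/\xi$ nondecreasing. Together with $f'\le0$ this gives
\[
\frac{1-f(\xi)}{\xi^2}\ \ge\ \frac{1-f(t_0)}{t_0^2}\ \ge\ \frac{-f'(t_0)}{2t_0}\ \ge\ \tfrac12 t_0^{\alpha-\frac52},
\]
where the first inequality is the slope monotonicity of a convex function through $(0,1)$ in the variable $x=\xi^2$, and the second is the chord–tangent inequality at $t_0$. Moreover $1-f(\xi)\ge 1-f(t_0)$ for $\xi\ge t_0$. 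Restricting the integral to $\xi\ge t_0$ and using $1-f(\xi)\ge 1-f(t_0)\ge \frac{t_0}{2}t_0^{\alpha-\frac32}=\tfrac12 t_0^{\alpha-\frac12}$ gives
\[
\int_{t_0}^\infty\frac{1-f}{\xi^{2+2\alpha}}\idiff\xi\ \ge\ \frac{t_0^{\alpha-\frac12}\,t_0^{-1-2\alpha}}{2(1+2\alpha)}=\frac{t_0^{-\frac32-\alpha}}{2(1+2\alpha)}.
\]
Multiplying by $\frac{8(1+2\alpha)(1+\alpha)}3c_{0,\alpha}$ yields $\frac{4(1+\alpha)}{3}c_{0,\alpha}t_0^{-\frac32-\alpha}$. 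To reach the stated constant $\frac{8(1+\alpha)}{3(1-2\alpha)}$, I would instead work directly with the representation $\frac{8(1+\alpha)}{3}c_{0,\alpha}\int_{\xi>0}|f'(\xi)|\xi^{-1-2\alpha}\idiff\xi$. There I would use $|f'(\xi)|\ge |f'(t_0)|\xi/t_0$ for $\xi\le t_0$, which follows from monotonicity of $f'(\xi)/\xi$. This integrates to
\[
t_0^{\alpha-\frac52}\int_0^{t_0}\xi^{-2\alpha}\idiff\xi=\frac{t_0^{-\frac32-\alpha}}{1-2\alpha},
\]
which matches the claimed constant exactly. So this derivative-based route on $[0,t_0]$ is the one I would commit to.
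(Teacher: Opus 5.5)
Your proposal is correct and follows the paper's proof. The bounds on $\mathfrak{b}(f)$ and the upper bound on $\mathfrak{c}(f)$ come from the envelopes $f_l\le f\le f_u$ exactly as in the paper, and your Beta-function evaluations reproduce $b_\alpha'$ and $c_{0,\alpha}''$ exactly, so no fallback is needed. For the lower bound on $\mathfrak{c}(f)$ you integrate the $f'$-form over $[0,t_0]$ using $|f'(\xi)|\ge \xi\, t_0^{\alpha-\frac52}$. This is the integration-by-parts counterpart of the paper's envelope $f(x)\le\max\bigl(1-\tfrac12 t_0^{\alpha-\frac52}x^2,\,1-\tfrac12 t_0^{\alpha-\frac12}\bigr)$ and gives the same constant.

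One slip in the route you abandoned: $(1-f(\xi))/\xi^2$ is non-increasing, so $\frac{1-f(\xi)}{\xi^2}\ge\frac{1-f(t_0)}{t_0^2}$ holds for $\xi\le t_0$, not for $\xi\ge t_0$. That branch only actually used $1-f(\xi)\ge 1-f(t_0)$, so it does not affect your final argument.
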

 \begin{proof}
This can be shown directly by 
 \begin{equation*}
\begin{split}
  \mathfrak{b}(f) &= 4 c_{0,\alpha} \int_{\xi>0} \frac{f(\xi)}{|\xi|^{2\alpha}}\, \idiff \xi \geq  4 c_{0,\alpha} \int_{\xi>0} \rbracket{1+t_0^{-2}\xi^2}^{-\delta_l}\xi^{-2\alpha} \, \idiff \xi = b_{\alpha}' \ , \\
  \mathfrak{b}(f) & \leq 4 c_{0,\alpha} \rbracket{ \int_0^{t_0} \xi^{-2\alpha}\, \idiff \xi + \int_{t_0}^\infty t_0^{\delta_u} x^{-2\alpha-\delta_u}\, \idiff \xi } = b_\alpha'' \ , \\
    \mathfrak{c}(f) & = \frac{8(1+2\alpha)(1+\alpha)}{3}c_{0,\alpha} \int_{\xi>0} \frac{1-f(\xi)}{\xi^{2+2\alpha}}\, \idiff \xi  \\
& \leq  \frac{8(1+2\alpha)(1+\alpha)}{3}c_{0,\alpha} \int_{\xi>0} \frac{1-(1+t_0^{-2}\xi^2)^{-\delta_l}}{\xi^{2+2\alpha}}\, \idiff \xi = c_{0,\alpha}'' \ .
 \end{split} 
\end{equation*}

Moreover, $f(\sqrt{x})$ being convex and $f'(t_0)\leq -t_0^{\alpha-\frac32}$ indicates that
$$ f(x) \leq \max\rbracket{1-\frac{1}{2} t_0^{\alpha-\frac52} x^2, 1-\frac{1}{2} t_0^{\alpha-\frac12} } \ ,$$
and thus we get
 \begin{equation*}
\begin{split}
\mathfrak{c}(f) & = \frac{8(1+2\alpha)(1+\alpha)}{3}c_{0,\alpha} \int_{\xi>0} \frac{1-f(\xi)}{\xi^{2+2\alpha}}\, \idiff \xi  \\
& \geq  \frac{8(1+2\alpha)(1+\alpha)}{3}c_{0,\alpha}  \rbracket{ \int_{0}^{t_0} \frac{ t_0^{\alpha-\frac52}}{2 \xi^{2\alpha}}\idiff \xi + \int_{t_0}^\infty \frac{ t_0^{\alpha-\frac12}}{2 \xi^{2+2\alpha}}\idiff \xi }\\ & = \frac{8(1+\alpha)}{3(1-2\alpha)} c_{0,\alpha} t_0^{-\alpha-\frac32}  \ . \qedhere
 \end{split} 
\end{equation*}
\end{proof}

\begin{proposition}\label{Proposition: Lowerbound F2} On $\scrV_1$, operator $\frR_\alpha$ preserves the lower bound $f_l$.
\end{proposition}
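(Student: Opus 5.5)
The plan is to compare logarithms. Since $f_l(x)=\exp\bigl(-\delta_l\log(1+t_0^{-2}x^2)\bigr)$ and
$$-\delta_l\log(1+t_0^{-2}x^2)=-\int_0^x\frac{2\delta_l y}{t_0^2+y^2}\,\diff y,$$
the inequality $\frR_\alpha(f)\geq f_l$ reduces to showing that
$$\int_0^x \frac{\frT_\alpha(f)(y)}{y\,(1+\frT_\alpha(f)(y))}\,\diff y\leq \int_0^x \frac{2\delta_l y}{t_0^2+y^2}\,\diff y \quad\text{for all } x>0.$$
I will prove this pointwise in $y$, that is,
$$\frac{\frT_\alpha(f)(y)}{1+\frT_\alpha(f)(y)}\leq \frac{2\delta_l y^2}{t_0^2+y^2}\quad\text{for all } y>0.$$

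First I control $\frT_\alpha(f)$. From the proof of Proposition \ref{Proposition: RalphaConvexity} we know $\frT_\alpha(f)\geq 0$. Corollary \ref{Corollary: Concavity_T} gives $\frT_\alpha(f)(y)\leq \frac{\frak c(f)}{2}y^2$, where $\frak c(f)$ is understood in absolute value, as in Lemma \ref{Lemma: bcbound}. Lemma \ref{Lemma: bcbound} then gives $\frak c(f)\leq c_{0,\alpha}''$. The map $s\mapsto s/(1+s)$ is increasing on $[0,\infty)$. Setting $a:=c_{0,\alpha}''/2$, it therefore suffices to check
$$\frac{a y^2}{1+a y^2}\leq \frac{2\delta_l y^2}{t_0^2+y^2}.$$
After clearing denominators this becomes
$$a t_0^2 + a y^2 \leq 2\delta_l + 2\delta_l a y^2 .$$
Because $\delta_l\geq \tfrac12$, the $y^2$ terms are handled automatically. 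So the proof reduces to the single constant inequality $c_{0,\alpha}'' t_0^2\leq 4\delta_l$.

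The main step, and the place where the specific choice \eqref{Equation: deltal} of $\delta_l$ enters, is verifying this constant inequality. Write
$$K:=\frac{2^{2\alpha}\Gamma(2+\alpha)\Gamma(\frac12-\alpha)}{3\pi\Gamma(1-\alpha)}.$$
The left inequality in \eqref{Equation: deltal} together with $\Gamma(\delta_l+1)=\delta_l\Gamma(\delta_l)$ gives
$$t_0^{1-2\alpha}\,\frac{\Gamma(\delta_l+\alpha+\frac12)}{\Gamma(\delta_l)}\leq \frac{\delta_l}{K}.$$
Hence
$$c_{0,\alpha}''t_0^2=c_{0,\alpha}\,t_0^{1-2\alpha}\,\frac{8(1+\alpha)\Gamma(\frac12-\alpha)\Gamma(\delta_l+\alpha+\frac12)}{3\Gamma(\delta_l)}\leq \frac{8(1+\alpha)\Gamma(\frac12-\alpha)\,c_{0,\alpha}}{3K}\,\delta_l .$$
Inserting $c_{0,\alpha}=\frac{2^{2\alpha-1}\Gamma(1+\alpha)}{\pi\Gamma(1-\alpha)}$ and using $(1+\alpha)\Gamma(1+\alpha)=\Gamma(2+\alpha)$, the prefactor $8(1+\alpha)\Gamma(\frac12-\alpha)c_{0,\alpha}/3$ should equal exactly $4K$. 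That yields $c_{0,\alpha}''t_0^2\leq 4\delta_l$. I expect this Gamma-function bookkeeping to be the only delicate point, since the constants in \eqref{Equation: deltal} appear to be tuned so that it closes with no room to spare.

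Integrating the pointwise bound from $0$ to $x$ and exponentiating then gives $\frR_\alpha(f)(x)\geq f_l(x)$ for $x\geq 0$. The case $x<0$ follows because both functions are even.
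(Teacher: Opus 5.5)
Your proof is correct and follows the paper's argument. Both proofs use $0\le\frT_\alpha(f)(y)\le\frac{c_{0,\alpha}''}{2}y^2$ and reduce the claim to $c_{0,\alpha}''t_0^2\le 4\delta_l$ together with $\delta_l\ge\frac12$. The only cosmetic difference is that you compare integrands pointwise, while the paper first integrates to get $\frR_\alpha(f)\ge(1+\frac{c_{0,\alpha}''}{2}x^2)^{-1/2}$ and then compares that with $f_l$. Your Gamma-function check is also correct: it shows that $\frac{8}{3}(1+\alpha)\Gamma(\frac12-\alpha)c_{0,\alpha}$ is exactly four times the left-hand constant in \eqref{Equation: deltal}, so the left inequality there gives precisely $c_{0,\alpha}''t_0^2\le 4\delta_l$, a step the paper only asserts.
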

\begin{proof}
Using $\frT_\alpha(f)(x)\leq \frac{\mathfrak{c}(f)}{2}x^2\leq \frac{c_{0,\alpha}''}{2}x^2$, we have
 \begin{equation*}
\begin{split}
\frR_\alpha(f)(x) & = \exp\rbracket{-\int_0^x \frac1y \rbracket{ 1-\frac{1}{1+\frT_\alpha(f)(y)} }  \idiff y }   \\
& \geq \exp\rbracket{-\int_0^x \frac1y \rbracket{ 1-\frac{1}{1+\frac{c_{0,\alpha}''}{2}y^2} }  \idiff y } = \rbracket{1+\frac{c_{0,\alpha}''}{2}x^2}^{-\frac12}.
 \end{split} 
\end{equation*}
To show that $\frR_\alpha(f)(x)\geq f_l(x)$, it suffices to show that for all $x\geq 0$,
$$ 1 + \frac{c_{0,\alpha}''}{2}x^2 \leq \rbracket{1 + t_0^{-2}x^2}^{2\delta_l}  \ , $$
which is equivalent to the following condition:
$$
c_{0,\alpha}'' \leq 4\delta_l t_0^{-2} \text{ and } \delta_l\geq\frac12 \ ,
$$
which is guaranteed by \eqref{Equation: deltal}.
\end{proof}

\begin{proposition}\label{Proposition: Derivativebound F2}  On $\scrV_1$, operator $ \frR_\alpha$ preserves the derivative bound $f'(t_0)\leq -t_0^{\alpha-\frac32}$.
\end{proposition}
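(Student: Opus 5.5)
We need to show that $\frR_\alpha(f)'(t_0)\leq -t_0^{\alpha-\frac32}$ for every $f\in\scrV_1$. By \eqref{Equation: Derivative R},
$$ -\frR_\alpha(f)'(t_0) = \frac{\frR_\alpha(f)(t_0)\,\frT_\alpha(f)(t_0)}{t_0\,(1+\frT_\alpha(f)(t_0))} . $$
So it suffices to bound two quantities from below: $\frR_\alpha(f)(t_0)$ and the ratio $\frT_\alpha(f)(t_0)/(1+\frT_\alpha(f)(t_0))$. The latter is increasing in $\frT_\alpha(f)(t_0)$, so only a lower bound on $\frT_\alpha(f)(t_0)$ is needed.

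\textbf{Step 1: lower bound on $\frR_\alpha(f)(t_0)$.} Proposition \ref{Proposition: Lowerbound F2} gives $\frR_\alpha(f)(t_0)\geq f_l(t_0)=2^{-\delta_l}$. This would make the final bound depend badly on $\delta_l$. A sharper route uses the intermediate estimate from its proof:
$$ \frR_\alpha(f)(t_0)\geq \Bigl(1+\tfrac{c_{0,\alpha}''}{2}t_0^2\Bigr)^{-1/2}. $$
By \eqref{Equation: deltal} and the definition of $c''_{0,\alpha}$, the quantity $c_{0,\alpha}''t_0^2$ is of order $t_0^{1-2\alpha}\Gamma(\delta_l+\alpha+\frac12)/\Gamma(\delta_l)\sim \delta_l^{\alpha+\frac12}t_0^{1-2\alpha}$, which in turn is comparable to $t_0^{...}$ via \eqref{Equation: deltal}. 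I will evaluate this exactly using the normalization in \eqref{Equation: deltal}.

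\textbf{Step 2: lower bound on $\frT_\alpha(f)(t_0)$.} Use the representation
$$ \frT_\alpha(f)(x)=-2c_{0,\alpha}\int_{\xi>0} f'(\xi)\,\xi^{1-2\alpha}F_{1,-2\alpha}(x/\xi)\,\idiff\xi , $$
whose integrand is nonnegative. Restrict the integral to $\xi\geq t_0$. Square-root convexity gives $f'(\xi)/\xi\geq f'(t_0)/t_0$ for $\xi\geq t_0$, which is the wrong direction for a lower bound on $-f'$. I would therefore restrict instead to $\xi\leq t_0$, where $-f'(\xi)\geq -\xi f'(t_0)/t_0\geq t_0^{\alpha-\frac52}\xi$. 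Substituting $\xi=t_0 s$ then gives
$$ \frT_\alpha(f)(t_0)\geq 2c_{0,\alpha}\,t_0^{\alpha-\frac12-2\alpha+1}\int_0^1 s^{2-2\alpha}F_{1,-2\alpha}(1/s)\,\idiff s = 2c_{0,\alpha}\,t_0^{\frac12-\alpha}\int_0^1 s^{2-2\alpha}F_{1,-2\alpha}(1/s)\,\idiff s . $$
The constant $c_{0,\alpha}'''$ is presumably exactly $2c_{0,\alpha}\int_0^1 s^{2-2\alpha}F_{1,-2\alpha}(1/s)\,\idiff s$ (its Gamma factors match $c_{0,\alpha}$). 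I would verify this identity from the explicit form of $F_{1,-2\alpha}$ in Lemma \ref{Lemma: AuxiliaryFunctionF1}. This yields $\frT_\alpha(f)(t_0)\geq c_{0,\alpha}'''t_0^{\frac12-\alpha}$, and by \eqref{Equation: t0} this is greater than $1$. Consequently
$$ \frac{\frT_\alpha(f)(t_0)}{1+\frT_\alpha(f)(t_0)}\geq \frac{c_{0,\alpha}'''t_0^{\frac12-\alpha}}{1+c_{0,\alpha}'''t_0^{\frac12-\alpha}}=\delta_u>\tfrac12 . $$

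\textbf{Step 3: combine.} Multiplying the two bounds gives
$$ -\frR_\alpha(f)'(t_0)\geq \delta_u\, t_0^{-1}\,\frR_\alpha(f)(t_0), $$
and we need this to be at least $t_0^{\alpha-\frac32}$, i.e.
$$ \frR_\alpha(f)(t_0)\geq \delta_u^{-1}t_0^{\alpha-\frac12}. $$
Since $\delta_u>\frac12$, it suffices that $\frR_\alpha(f)(t_0)\geq 2t_0^{\alpha-\frac12}$. Step 1 gives $\frR_\alpha(f)(t_0)\gtrsim (c''_{0,\alpha})^{-1/2}t_0^{-1}$, and the normalization \eqref{Equation: deltal} pins $c''_{0,\alpha}t_0^2$ in terms of $t_0^{1-2\alpha}$. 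This suggests the comparison works out, with the second condition in \eqref{Equation: t0} (the one involving $t_0^{\frac12+\alpha}$) supplying the necessary largeness.

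\textbf{Main obstacle.} I expect the main difficulty to be this last constant bookkeeping: showing that the lower bound on $\frR_\alpha(f)(t_0)$ beats $2t_0^{\alpha-\frac12}$ using exactly \eqref{Equation: t0} and \eqref{Equation: deltal}. If the crude bound $(1+c''t_0^2/2)^{-1/2}$ is too weak, I would instead bound the integral $\int_0^{t_0}\frac{\frT}{y(1+\frT)}\,dy$ more carefully. For this I would use $\frT\leq\min(\frak b,\frak c y^2/2)$ from Corollary \ref{Corollary: Concavity_T}, together with the upper bounds on $\frak b,\frak c$ from Lemma \ref{Lemma: bcbound}, splitting the integral at $y=\sqrt{2/\frak c}$.
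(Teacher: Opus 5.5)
\textbf{There is a genuine gap in Step 3, and it cannot be closed.}

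Your Steps 1 and 2 are the paper's own steps. Step 1 is the bound $\frR_\alpha(f)(t_0)\ge(1+\frac{c''_{0,\alpha}}{2}t_0^2)^{-1/2}$ from the proof of Proposition~\ref{Proposition: Lowerbound F2}. Step 2 is $\frT_\alpha(f)(t_0)\gtrsim t_0^{\frac12-\alpha}$, obtained by restricting to $\xi\le t_0$. Your reduction of the claim to $\frR_\alpha(f)(t_0)\ge\delta_u^{-1}t_0^{\alpha-\frac12}$ is correct, and Step 3 is exactly where the argument fails.

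From the definition of $c''_{0,\alpha}$ and \eqref{Equation: deltal} one computes $c''_{0,\alpha}t_0^2\in[2\delta_l,4\delta_l]$. Moreover, \eqref{Equation: deltal} together with Gautschi's inequality $\Gamma(\delta_l+1)/\Gamma(\delta_l+\alpha+\frac12)\le(\delta_l+1)^{\frac12-\alpha}$ forces $\delta_l\gtrsim t_0^2$. So the Step 1 bound is of order $t_0^{-1}$. Beating $t_0^{\alpha-\frac12}$ with it would require an \emph{upper} bound on $t_0^{\frac12+\alpha}$. The second condition in \eqref{Equation: t0} is a \emph{lower} bound on $t_0^{\frac12+\alpha}$, so it works against you.

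Your fallback gives the same order. With the uniform bounds $\mathfrak b(f)\le b''_\alpha\sim t_0^{1-2\alpha}$ and $\mathfrak c(f)\le c''_{0,\alpha}=O(1)$, splitting at $\sqrt{2\mathfrak b/\mathfrak c}\sim t_0^{\frac12-\alpha}$ again yields roughly $t_0^{-1}$. This is because $\mathfrak b/(1+\mathfrak b)$ is close to $1$ on most of $[1,t_0]$.

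\textbf{The claimed bound is false for large $t_0$.} Take $f$ with
\[
f(\sqrt s)=\max\bigl(f_l(\sqrt s),\tfrac12t_0^{\alpha-\frac52}(2t_0^2-s)\bigr).
\]
One checks that $f\in\scrV_1$ with $f'(t_0)=-t_0^{\alpha-\frac32}$, and that $f=f_l$ on $[0,M]$ with $M\sim\sqrt{\log t_0}$. For $\xi\le M$ and $y\ge RM$ we have $F_{1,-2\alpha}(y/\xi)\ge F_{1,-2\alpha}(R)\to\frac{2}{1-2\alpha}$. So the near part of the integral alone gives $\frT_\alpha(f)(y)\ge(1-o(1))\,b'_\alpha$ on $[RM,t_0]$. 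By \eqref{Equation: deltal}, every $f\in\scrV_1$ has $\mathfrak b(f)\ge b'_\alpha>\frac{3}{2(1+\alpha)}>1$, so $\frT_\alpha(f)(y)>1$ there. Hence $\frR_\alpha(f)(t_0)\le(RM/t_0)^{1/2}$, and
\[
-\frR_\alpha(f)'(t_0)\le\frR_\alpha(f)(t_0)/t_0\lesssim(\log t_0)^{1/4}t_0^{-\frac32}<t_0^{\alpha-\frac32}.
\]

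The paper's proof runs through your Steps 1--3. It closes only because its last display divides $\frR_\alpha(f)'(t_0)$ by $-t_0^{\alpha-\frac32}$ while dropping the factor $1/t_0$ from \eqref{Equation: Derivative R}. With that factor restored, the paper's final quantity is $\frac{c'''_{0,\alpha}t_0^{1-2\alpha}}{1+c'''_{0,\alpha}t_0^{1/2-\alpha}}\bigl(1+\frac{c''_{0,\alpha}t_0^2}{2}\bigr)^{-1/2}\sim t_0^{-\frac12-\alpha}\to0$. So your more careful accounting has exposed a real defect in the invariance of the constraint $f'(t_0)\le-t_0^{\alpha-\frac32}$, not a missing trick. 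The set $\scrV_1$, or the choice of $t_0$ and $\delta_l$, would have to be redesigned.

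A smaller point: your guessed identity $c'''_{0,\alpha}=2c_{0,\alpha}\int_0^1s^{2-2\alpha}F_{1,-2\alpha}(1/s)\,ds$ is false. At $\alpha=\frac14$ the right side is about $0.46$, while $c'''_{0,\alpha}\approx0.82$; the paper's computation of its constant writes $F_{1,-2\alpha}'$ where $F_{1,-2\alpha}$ belongs. So \eqref{Equation: t0} cannot be quoted verbatim to get $\frT_\alpha(f)(t_0)>1$, although this is harmless for large $t_0$.
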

\begin{proof}
By convexity of $f(\sqrt{x})$, we have $\frac{f'(x)}{x}\leq \frac{f'(t_0)}{t_0}\leq -{t_0^{\alpha-\frac52}}$ for all $0\leq x<t_0$. Then, we obtain
\begin{equation}\label{Equation: Bound Tp}
\begin{split}
  \frT_\alpha(f)'(t_0)  & = -2c_{0,\alpha} \int_{\xi>0} f'(\xi) \xi^{-2\alpha} F_{1,-2\alpha}'(t_0/\xi) \, \idiff \xi\\
  &\geq  -2c_{0,\alpha} \int_0^{t_0}  f'(\xi) \xi^{-2\alpha} F_{1,-2\alpha}'(t_0/\xi) \, \idiff \xi \\
  & =  -2c_{0,\alpha} t_0^{-2-2\alpha} \int_0^{t_0}  f'(\xi) \xi^{2} F_{1,-2\alpha}'(\xi/t_0) \, \idiff \xi  \\
  &=  -2c_{0,\alpha}  t_0^{1-2\alpha} \int_0^1  f'(t_0\xi) \xi^{2} F_{1,-2\alpha}'(\xi) \, \idiff \xi  \\
   & \geq   2c_{0,\alpha} t_0^{\alpha-\frac32} t_0^{1-2\alpha} \int_0^1 \xi^{3} F_{1,-2\alpha}'(\xi) \, \idiff \xi    \\
 &= \frac{2^{1 - 2\alpha}\alpha(1 + \alpha)  2c_{0,\alpha} t_0^{-\alpha-\frac12} }{(1 - \alpha)(1-2\alpha)(2-\alpha)} \ , 
  \end{split}
\end{equation}
  
  \begin{equation}\label{Equation: Bound T}
\begin{split}
\frT_\alpha(f)(t_0)  & =  -2c_{0,\alpha} \int_{\xi>0} f'(\xi) \xi^{1-2\alpha} F_{1,-2\alpha}'(t_0/\xi) \, \idiff \xi \\
   &\geq  -2c_{0,\alpha}\int_0^{t_0}  f'(\xi) \xi^{1-2\alpha} F_{1,-2\alpha}'(t/\xi) \, \idiff \xi \\
  & =   -2c_{0,\alpha} t_0^{-2-2\alpha} \int_0^{t_0}  f'(\xi) \xi^{3} F_{1,-2\alpha}'(\xi/t_0) \, \idiff \xi \\
  &=   -2c_{0,\alpha} t_0^{\alpha-\frac32}t_0^{2-2\alpha} \int_0^1  f'(t_0\xi) \xi^{3} F_{1,-2\alpha}'(\xi) \, \idiff \xi  \\
 & \geq  2c_{0,\alpha} t_0^{\alpha-\frac32} t_0^{2-2\alpha} \int_0^1 \xi^{4} F_{1,-2\alpha}'(\xi) \, \idiff \xi    = c_{0,\alpha}''' t_0^{\frac12-\alpha} \ , \\
 \frT_\alpha(f)(t_0) &  \leq \min\rbracket{\frac{\mathfrak c(f)}{2} t_0^2, \mathfrak{b}(f)} \leq \min\rbracket{\frac{c_{0,\alpha}''}{2}t_0^2, b_\alpha'' } \ .
 \end{split} 
\end{equation}

From Proposition  \ref{Proposition: Lowerbound F2}, we know that 
$$ \frR_\alpha(f)(t_0) \geq \rbracket{1+\frac{c_{0,\alpha}'' t_0^2}{2}}^{-\frac12} .$$
Then we can compute directly from \eqref{Equation: t0},  \eqref{Equation: deltal}, and  \eqref{Equation: Derivative R} that
 $$
\frac{ \frR_\alpha(f)'(t_0) }{-t_0^{\alpha-\frac32} }=  \frac{\frR_\alpha(f)(t_0) \frT_\alpha(f)(t_0)}{t_0^{\alpha-\frac32}(1+\frT_\alpha(f)(t_0))}   \geq \frac{c_{0,\alpha}'''  t_0^{2-2\alpha}}{1+c_{0,\alpha}''' t_0^{\frac12-\alpha}}  \rbracket{1+\frac{c_{0,\alpha}'' t_0^2}{2}}^{-\frac12}   \geq 1 \ . \qedhere
$$
\end{proof}

\begin{proposition}\label{Proposition: Upperbound F2}  On $\scrV_1$, operator $ \frR_\alpha$ preserves the upper bound $f_u$.
\end{proposition}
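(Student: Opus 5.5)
The plan is to verify the two pieces of $f_u(x)=\min\bigl(1,t_0^{\delta_u}x^{-\delta_u}\bigr)$ separately, reusing the lower bound on $\frT_\alpha(f)(t_0)$ from the proof of Proposition \ref{Proposition: Derivativebound F2}. The constant $\delta_u$ appears to have been chosen precisely so that this works.

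\emph{Step 1: the bound by $1$.} Since $\frT_\alpha(f)(0)=0$, we have $\frR_\alpha(f)(0)=1$. By \eqref{Equation: Derivative R}, together with $\frT_\alpha(f)\geq 0$ (shown in the proof of Proposition \ref{Proposition: RalphaConvexity}), $\frR_\alpha(f)$ is non-increasing on $[0,\infty)$. Hence $\frR_\alpha(f)\leq 1$ everywhere. This already gives $\frR_\alpha(f)\leq f_u$ on $[0,t_0]$, where $f_u\equiv 1$.

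\emph{Step 2: monotonicity of the integrand.} For $x> t_0$ we bound
\[
\log \frR_\alpha(f)(x) = -\int_0^x \frac{1}{y}\,\frac{\frT_\alpha(f)(y)}{1+\frT_\alpha(f)(y)}\idiff y \leq -\int_{t_0}^x \frac{1}{y}\,\frac{\frT_\alpha(f)(y)}{1+\frT_\alpha(f)(y)}\idiff y .
\]
This uses that the integrand is nonnegative on $[0,t_0]$. The formula $\frT_\alpha(f)'(x) = -2c_{0,\alpha}\int_{\xi>0} f'(\xi)\xi^{-2\alpha}F_{1,-2\alpha}'(x/\xi)\idiff\xi\geq 0$ shows that $\frT_\alpha(f)$ is non-decreasing on $[0,\infty)$. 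The map $s\mapsto s/(1+s)$ is increasing on $[0,\infty)$. So for every $y\geq t_0$,
\[
\frac{\frT_\alpha(f)(y)}{1+\frT_\alpha(f)(y)} \geq \frac{\frT_\alpha(f)(t_0)}{1+\frT_\alpha(f)(t_0)} .
\]

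\emph{Step 3: identifying $\delta_u$.} From \eqref{Equation: Bound T} we have $\frT_\alpha(f)(t_0)\geq c_{0,\alpha}''' t_0^{\frac12-\alpha}$. Applying monotonicity of $s\mapsto s/(1+s)$ once more, the right-hand side in Step 2 is at least
\[
\frac{c_{0,\alpha}''' t_0^{\frac12-\alpha}}{1+c_{0,\alpha}''' t_0^{\frac12-\alpha}}=\delta_u ,
\]
which is exactly the definition of $\delta_u$.

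\emph{Step 4: conclusion.} Combining Steps 2 and 3, for $x> t_0$,
\[
\log\frR_\alpha(f)(x)\leq -\delta_u\log(x/t_0),
\]
that is, $\frR_\alpha(f)(x)\leq t_0^{\delta_u}x^{-\delta_u}$. Together with Step 1 this gives $\frR_\alpha(f)\leq f_u$ on $[0,\infty)$, and evenness extends it to $\bbR$.

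I do not expect a serious obstacle. The one point to check is that the lower bound $\frT_\alpha(f)(t_0)\geq c_{0,\alpha}'''t_0^{\frac12-\alpha}$ in \eqref{Equation: Bound T} uses only $f\in\scrV_1$, namely the derivative condition $f'(t_0)\leq -t_0^{\alpha-\frac32}$ and square-root convexity. It does not use any property of $\frR_\alpha(f)$, so the argument is not circular. The membership $\delta_u\in(1-2\alpha,1)$ is irrelevant here and matters only for the finiteness of $\mathfrak{b}$ in Lemma \ref{Lemma: bcbound}.
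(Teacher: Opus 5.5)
Your proof is correct and is essentially the paper's argument: you drop the nonnegative contribution on $[0,t_0]$, use $\frT_\alpha(f)(y)\geq c_{0,\alpha}'''t_0^{\frac12-\alpha}$ for $y\geq t_0$ to get $\frac{\frT_\alpha(f)(y)}{1+\frT_\alpha(f)(y)}\geq\delta_u$ there, and integrate $\delta_u/y$ from $t_0$ to $x$. You are slightly more careful than the paper in two places: you extend the bound from $t_0$ to all $y\geq t_0$ via the global monotonicity $\frT_\alpha(f)'\geq 0$ (the paper only cites the pointwise estimates at $t_0$), and you treat the region $[0,t_0]$, where $f_u\equiv 1$, explicitly.
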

\begin{proof} From equations \eqref{Equation: Bound Tp} and \eqref{Equation: Bound T}, we have for any $y\geq t_0$,
\begin{equation*}
 \frT_\alpha(f)(y)\geq c_{0,\alpha}''' t_0^{\frac12-\alpha}\  \Rightarrow \  1-\frac{1}{1+\frT_\alpha(f)(y)} \geq \delta_u =\frac{c_{0,\alpha}''' t_0^{\frac12-\alpha}}{1+c_{0,\alpha}''' t_0^{\frac12-\alpha}} \ .
\end{equation*}
Consequently, for any $x>t_0$, we have
 \begin{equation*}
\begin{split}
\frR_\alpha(f)(x) & = \exp\rbracket{-\int_0^x \frac1y \rbracket{ 1-\frac{1}{1+\frT_\alpha(f)(y)} }  \, dy }   \\
& \leq  \exp\rbracket{-\int_{t_0}^x \frac1y \rbracket{ 1-\frac{1}{1+\frT_\alpha(f)(y)} }  \, dy }   \\
& \leq \exp\rbracket{- \delta_u \int_{t_0}^x \frac1y  \, dy } = \rbracket{\frac{x}{t}}^{-\delta_u} = f_u \ .  \qedhere
 \end{split} 
\end{equation*}
\end{proof}

Propositions \ref{Proposition: RalphaConvexity}, \ref{Proposition: Lowerbound F2}, \ref{Proposition: Derivativebound F2}, and \ref{Proposition: Upperbound F2} directly give us the following corollary.
\begin{corollary} For any $\alpha\in(0,\frac12)$,  $\frR_\alpha$ maps $\scrV_1$ to itself.
\end{corollary}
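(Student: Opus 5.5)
The corollary follows by checking, one at a time, each defining condition of $\scrV_1$ for $g:=\frR_\alpha(f)$ with $f\in\scrV_1$ arbitrary.

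\emph{Membership in $\scrV_0$.} The function $g$ is even because $\frT_\alpha(f)$ is even in $x$, since $F_{1,-2\alpha}$ is applied to $x/\xi$ and $f$ is even. It is continuous and bounded by $1$, because the integrand $\frac{\frT_\alpha(f)(y)}{y(1+\frT_\alpha(f)(y))}$ is nonnegative and locally integrable. Local integrability holds because $0\le \frT_\alpha(f)(y)\le \frac{\frak c(f)}2 y^2$ by Corollary \ref{Corollary: Concavity_T}, with $\frak c(f)\le c_{0,\alpha}''$ from Lemma \ref{Lemma: bcbound}. So the integrand is $O(y)$ near $0$ and $O(1/y)$ at infinity.

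\emph{Normalization, sign and monotonicity.} We have $g(0)=1$ because the integral is empty, and $g\ge 0$ because $g$ is an exponential.

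\emph{Shape conditions.} Proposition \ref{Proposition: RalphaConvexity} gives that $g$ is non-increasing on $[0,\infty)$ and that $g(\sqrt{x})$ is convex.

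\emph{Envelopes.} The lower bound $g\ge f_l$ is Proposition \ref{Proposition: Lowerbound F2}. For the upper bound, $g\le 1$ on $[0,t_0]$ by monotonicity and $g(0)=1$, and $g\le t_0^{\delta_u}x^{-\delta_u}$ for $x\ge t_0$ by Proposition \ref{Proposition: Upperbound F2}. Together these give $g\le f_u$.

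\emph{Derivative bound.} The bound $g'(t_0)\le -t_0^{\alpha-\frac32}$ is Proposition \ref{Proposition: Derivativebound F2}. By the formula \eqref{Equation: Derivative R} $g$ is $C^1$ on $(0,\infty)$, so the one-sided derivatives agree and this bound holds in the sense used in the definition of $\scrV_1$.

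These checks are direct consequences of the earlier propositions. The only point needing care is the well-definedness and continuity of $g$ on all of $\bbR$, which is the integrability check under \emph{Membership in $\scrV_0$}.
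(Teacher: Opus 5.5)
Your proposal is correct and follows the paper's argument. The paper simply states that Propositions \ref{Proposition: RalphaConvexity}, \ref{Proposition: Lowerbound F2}, \ref{Proposition: Derivativebound F2}, and \ref{Proposition: Upperbound F2} give the corollary; your extra checks (evenness, continuity and boundedness via integrability of the exponent, $\frR_\alpha(f)(0)=1$, and $C^1$ regularity so the condition at $t_0$ is unambiguous) make explicit what the paper leaves implicit. One small attribution point: the lower bound $\frT_\alpha(f)\ge 0$ does not come from Corollary \ref{Corollary: Concavity_T}. It comes from the integral formula for $\frT_\alpha(f)$ with $f'\le 0$ and $F_{1,-2\alpha}\ge 0$, as noted in the proof of Proposition \ref{Proposition: RalphaConvexity}.
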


Next, we show that $\frR_\alpha$ is continuous on $\scrV_1$ in the $L^\infty$-topology.
\begin{proposition}\label{Proposition: ContinuityRaHP} For any $\alpha\in(0,\frac12)$,  $\frR_\alpha: \scrV_1 \to \scrV_1$ is continuous in $L^\infty$-norm.
\end{proposition}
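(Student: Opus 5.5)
Fix $f_0\in\scrV_1$ and $\varepsilon>0$. We show that $\|\frR_\alpha(f)-\frR_\alpha(f_0)\|_{L^\infty}\lesssim\varepsilon$ whenever $f\in\scrV_1$ and $\|f-f_0\|_{L^\infty}\le\delta$ with $\delta$ small. The argument splits $[0,\infty)$ into a compact piece $[0,M]$ and a tail $[M,\infty)$, as in Proposition \ref{Proposition: ContinuityMapRa}.

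\textbf{Tail.} By Proposition \ref{Proposition: Upperbound F2}, both $\frR_\alpha(f)$ and $\frR_\alpha(f_0)$ lie between $0$ and $f_u(x)=(t_0/x)^{\delta_u}$ for $x\ge t_0$. Choose $M\ge t_0$ with $f_u(M)\le\varepsilon$. Then $|\frR_\alpha(f)-\frR_\alpha(f_0)|\le\varepsilon$ on $[M,\infty)$, uniformly in $f$.

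\textbf{Pointwise bound on $\frT_\alpha(f)-\frT_\alpha(f_0)$.} Integrate by parts back to the form without $f'$:
\[
\frT_\alpha(f)(y)=2c_{0,\alpha}\int_{\xi>0}(f(\xi)-1)\,\partial_\xi\bigl(\xi^{1-2\alpha}F_{1,-2\alpha}(y/\xi)\bigr)\,\idiff\xi .
\]
Subtracting $1$ is legitimate, since $f-1$ vanishes at $\xi=0$ to second order and $F_{1,-2\alpha}(y/\xi)\xi^{1-2\alpha}$ is bounded by $\xi^{1-2\alpha}$ at infinity. The boundary term at infinity is absorbed by the normalization used in the original derivation, and can be checked directly. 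Write $K_y(\xi)$ for the kernel. It behaves like $\xi^{-1-2\alpha}y^{2}$-type decay for $\xi\gg y$ and like $y^{-2\alpha}$ near $\xi\approx y$ (integrable, since $2\alpha<1$).

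Using $|f-f_0|\le\min(\delta,\ \xi^2)$ (both functions lie above $f_l\ge1-\delta_l t_0^{-2}\xi^2$, giving the small-$\xi$ bound $\lesssim\xi^2$), we estimate
\[
|\frT_\alpha(f)(y)-\frT_\alpha(f_0)(y)|\le C\int_0^\infty\min(\delta,\xi^2)\,|K_y(\xi)|\,\idiff\xi .
\]
After the scaling $\xi=ty$, this is $\le C(y)\,\delta^{\kappa}$ for some $\kappa>0$. The $\delta$-cutoff near zero is needed because $\int_0^1\xi^{-2-2\alpha}\cdot\xi^2$ converges, while $\int\xi^{-1-2\alpha}$ converges at infinity. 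On $[0,M]$ the bound is uniform: the constant is $\lesssim y^{?}$ but bounded for $y\le M$, and the ratio $\Delta\frT/y$ is controlled for small $y$, where the $\min(\delta,\xi^2)$ weight gives $|\Delta\frT(y)|\lesssim\min(y^2,\delta^\kappa)$.

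\textbf{Compact part and the integrand.} It remains to pass to $\frR_\alpha$ on $[0,M]$. Since $\frT_\alpha\ge0$ by Proposition \ref{Proposition: RalphaConvexity}, the map $s\mapsto s/(1+s)$ is $1$-Lipschitz on $[0,\infty)$, so
\[
\Bigl|\log\frR_\alpha(f)(x)-\log\frR_\alpha(f_0)(x)\Bigr|\le\int_0^M\frac{|\Delta\frT(y)|}{y}\,\idiff y .
\]
Split this integral at $y=\sqrt\delta$:
\[
\int_0^{\sqrt\delta}y\,\idiff y+\int_{\sqrt\delta}^M\frac{C\delta^\kappa}{y}\,\idiff y\lesssim\delta+\delta^\kappa\log(M/\sqrt\delta)\to0 .
\]
Because $0<\frR_\alpha\le1$, exponentiating gives uniform closeness on $[0,M]$.

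\textbf{Main obstacle.} The hardest step is the uniform estimate of $\Delta\frT$. It has to handle the near-diagonal singularity of the kernel, of order $|x-\xi|^{-2\alpha}$, which requires integration by parts without the derivative, and it has to keep the constants uniform for $y\le M$. If the integration by parts proves messy, I would instead use the original representation
\[
\frT_\alpha(f)(x)=\frac{c_{0,\alpha}}{-\alpha}\int_{\xi>0}\Bigl[\frac{\xi}{x}\bigl(|x-\xi|^{-2\alpha}-|x+\xi|^{-2\alpha}\bigr)-4\alpha\xi^{-2\alpha}\Bigr]f(\xi)\,\idiff\xi ,
\]
whose kernel is locally integrable ($2\alpha<1$) and is $O(x^2\xi^{-2-2\alpha})$ at infinity. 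Multiplying by $|f-f_0|\le\min(\delta,\xi^2,f_u)$, which decays like $\xi^{-\delta_u}$, gives integrability, and dominated convergence on $[0,M]$ then yields uniform smallness.
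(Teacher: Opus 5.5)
Your proposal is correct and follows essentially the paper's route: uniform tail control from $f_u$, a pointwise bound on $\frT_\alpha(f)-\frT_\alpha(f_0)$ from $\|f-f_0\|_{L^\infty}\le\delta$ times the $L^1$ norm of the original kernel (locally integrable, $O(y^2\xi^{-2-2\alpha})$ at infinity), and the Lipschitz bounds for $s\mapsto s/(1+s)$ and $s\mapsto e^{-s}$ on $[0,\infty)$. The paper is more direct: the scaling $\xi=ty$ gives $\|K_y\|_{L^1}=Cy^{1-2\alpha}$, so $|\frT_\alpha(f)(y)-\frT_\alpha(f_0)(y)|\le C\delta y^{1-2\alpha}$ and $|\frR_\alpha(f)(x)-\frR_\alpha(f_0)(x)|\le C\delta\int_0^x y^{-2\alpha}\,\mathrm{d}y$; your $\min(\delta,\xi^2)$ weight and the split at $\sqrt\delta$ are therefore unnecessary, since the kernel is only $O(\xi^{-2\alpha})$ near $\xi=0$ and has no $\xi^{-2-2\alpha}$ singularity there to compensate.
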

\begin{proof}
Let us fix any $f_0 \in \scrV_1 $.  For any $f\in\scrV_1$, Proposition \ref{Proposition: Upperbound F2} implies that $ \frR_\alpha(f)(x)\leq f_u(x) \leq x^{-\delta_u} $. Let $\norm{f - f_0}_{L^\infty}\leq \delta$ for some small $\delta$. Then we have
\begin{align*}
& \abs{\frT_\alpha(f)(x) - \frT_\alpha(f_0)(x)} \\
 = & c_{0,\alpha}' \abs{ \int_{\xi>0} \rbracket{f(\xi) - f_0(\xi)} \rbracket{\frac{\xi  }{x|x-\xi|^{2\alpha}}-\frac{\xi  }{x|x+\xi|^{2\alpha}} - 4\alpha\frac{1}{|\xi|^{2\alpha}}} \, \idiff \xi } \\
 \leq &  \delta c_{0,\alpha}' \int_{\xi>0} \abs{ {\frac{\xi  }{x|x-\xi|^{2\alpha}}-\frac{\xi  }{x|x+\xi|^{2\alpha}} - 4\alpha\frac{1}{|\xi|^{2\alpha}}}} \, \idiff \xi \\
 = &  \delta x^{1-2\alpha}  2c_{0,\alpha}\int_{\xi>0} \abs{ {\frac{\xi  }{|1-\xi|^{2\alpha}}-\frac{\xi  }{|1+\xi|^{2\alpha}} - 4\alpha\frac{1}{|\xi|^{2\alpha}}}} \, \idiff \xi  \\
\leq &  \delta x^{1-2\alpha}  2c_{0,\alpha} \int_{0}^2  {\frac{\xi  }{|1-\xi|^{2\alpha}}+\frac{\xi  }{|1+\xi|^{2\alpha}} + 4\alpha\frac{1}{|\xi|^{2\alpha}}} \, \idiff \xi \\
& + \delta x^{1-2\alpha}  2c_{0,\alpha} \int_{2}^\infty \xi^{1-2\alpha}  \rbracket{\rbracket{1-\xi^{-1}}^{-2\alpha} - \rbracket{1+\xi^{-1}}^{-2\alpha} - 4\alpha\xi^{-1}} \, \idiff \xi    \\
\leq & \delta x^{1-2\alpha}  2c_{0,\alpha} \Bigg( \frac{5+3^{1-2\alpha}(1-4\alpha) - 4^{1-\alpha}\alpha(4^\alpha+4\alpha-4) }{2(1-\alpha)(1-2\alpha)} \\
& \quad\quad \quad\quad\quad\quad + \frac{\alpha(\alpha+1)(2\alpha+1)2^{2\alpha+4}}{3} \int_2^\infty \xi^{-2-2\alpha}\, \idiff \xi \Bigg) \\
\leq & \delta x^{1-2\alpha}  2c_{0,\alpha} \rbracket{ \frac{5}{1-2\alpha}   } \ .
\end{align*}
Therefore, we have
\begin{align*}
\abs{ \frR_\alpha(f)(x)-\frR_\alpha(f_0)(x) } & \leq \int_0^x \frac1y \abs{  \frac{\frT_\alpha(f)(y)}{1+\frT_\alpha(f)(y)} -  \frac{\frT_\alpha(f_0)(y)}{1+\frT_\alpha(f_0)(y)} }  \, dy  \\
& = \int_0^x \frac1y   \frac{\abs{\frT_\alpha(f)(y)-\frT_\alpha(f_0)(y)}}{(1+\frT_\alpha(f)(y))(1+\frT_\alpha(f_0)(y))}   \, dy  \\
& \leq \delta  2c_{0,\alpha} \rbracket{ \frac{5}{1-2\alpha}  }
 \int_0^x y^{-2\alpha}  \, dy  \\
 & \leq \frac{10 c_{0,\alpha}}{(1-2\alpha)^2} \delta x^{1-2\alpha} \ .
\end{align*}
Thus, for any $\varepsilon>0$, let $M = \rbracket{\frac{2t_0}{\varepsilon}}^{\frac{1}{\delta_u}}>1$ such that 
$ \frR_\alpha(f)(x)\leq \frac{\varepsilon}{2} $ for all $x\geq M$. We take sufficiently small $\delta$ so that $ \frac{ 10c_{0,\alpha}}{(1-2\alpha)^2} \delta M^{1-2\alpha} \leq \varepsilon$ and thus
$$ \norm{\frR_\alpha(f) - \frR_\alpha(f_0)}_{L^\infty} \leq \varepsilon \ ,$$
which finishes the proof that operator $\frR_\alpha$ is continuous on $\scrV_1$ in $L^\infty$-norm.
\end{proof}


\subsubsection{Existence of fixed-point $f_*$ of $\frR_\alpha$}
\begin{lemma}\label{Lemma: CompactSetV1HP}  The function set $\scrV_1$ is a compact subset of Banach space $\scrV_0$.
\end{lemma}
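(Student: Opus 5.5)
We prove sequential compactness of $\scrV_1$ in the $L^\infty$-norm of $\scrV_0$, following the argument of Lemma \ref{Lemma: CompactSetV1}. The weight $\rho$ is no longer available, so the uniform upper envelope $f_u$ will take its place in controlling the tails.

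\textbf{Uniform derivative bound.} For $f\in\scrV_1$ and $x>0$, monotonicity and the convexity of $f(\sqrt{\cdot})$ give
\[
-\frac{f'(x)}{2x}\leq \frac{f(0)-f(x)}{x^2}\leq \min(1,x^{-2})\cdot\sup_{\xi}\bigl(1-f_l(\xi)\bigr)\cdot C ,
\]
where in fact simply $f(0)-f(x)\leq 1$. For small $x$ we also have $1-f(x)\leq 1-f_l(x)\leq \delta_l t_0^{-2}x^2$. This yields $|f'(x)|\leq C(\alpha,t_0,\delta_l)$ uniformly on $[0,\infty)$, with one-sided derivatives understood as in the paper. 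Hence $\scrV_1$ is uniformly Lipschitz, and it is uniformly bounded by $1$.

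\textbf{Uniform tail control.} Since $0\leq f\leq f_u=\min(1,t_0^{\delta_u}x^{-\delta_u})$ with $\delta_u>0$, for each $m$ we can choose $L_m$ with $f_u(L_m)\leq 2^{-m}$. Then $|f_i(x)-f_j(x)|\leq 2^{-m}$ for all $|x|\geq L_m$ and all $f_i,f_j\in\scrV_1$.

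\textbf{Extraction and closedness.} Given a sequence $\{f_n\}\subset\scrV_1$, apply Arzel\`a--Ascoli on each $[0,L_m]$ and take a diagonal subsequence. The result is Cauchy in $L^\infty([0,\infty))$, and by evenness it is Cauchy in $L^\infty(\bbR)$, so it converges in $\scrV_0$. It remains to check that the limit lies in $\scrV_1$, and the only delicate constraint here is $f'(t_0)\leq -t_0^{\alpha-\frac32}$. The other constraints are pointwise inequalities, monotonicity and convexity of $f(\sqrt{\cdot})$, and all of these pass to uniform limits.

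For the derivative constraint we use convexity in the variable $s=x^2$. Writing $g_n(s)=f_n(\sqrt s)$, the condition says that both one-sided derivatives of $g_n$ at $s_0=t_0^2$ are at most $-t_0^{\alpha-\frac32}/(2t_0)$. For convex functions this is equivalent to
\[
g_n(s)-g_n(s_0)\leq -\frac{t_0^{\alpha-\frac52}}{2}\,(s-s_0)\quad\text{for all } s\geq s_0,
\]
together with the condition on the left derivative. That left-derivative condition is in turn equivalent to the chord slopes $\frac{g_n(s_0)-g_n(s)}{s_0-s}$ being at most the bound for all $s<s_0$, which is implied by convexity and the right-derivative inequality. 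All of these are inequalities between function values, so they pass to the uniform limit, and the limit therefore satisfies the same derivative bound.

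This closedness step, in particular the preservation of the one-sided derivative bound, is the only point requiring care. The rest mirrors Lemma \ref{Lemma: CompactSetV1}.
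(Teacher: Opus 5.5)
\textbf{What matches the paper.} Your extraction argument is the paper's proof: a uniform Lipschitz bound from the convexity of $f(\sqrt{\cdot})$ together with $f\ge f_l$, uniform tail control from $f_u$, then Arzel\`a--Ascoli on $[0,L_m]$ and a diagonal subsequence. Your Lipschitz constant depends on $\delta_l t_0^{-2}$, which is more careful than the paper's $|f'|\le 2$; that bound tacitly needs $\delta_l\le t_0^2$. The paper stops at this point and takes closedness of $\scrV_1$ from its definition. You try to verify closedness, and that step contains a genuine error.

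\textbf{The error.} Write $g(s)=f(\sqrt s)$, $s_0=t_0^2$ and $c=\tfrac12 t_0^{\alpha-\frac52}$. For convex $g$, the chord slopes $\frac{g(s)-g(s_0)}{s-s_0}$ with $s>s_0$ are non-decreasing in $s$, and $g'_+(s_0)$ is their infimum. So $g(s)-g(s_0)\le -c(s-s_0)$ for all $s\ge s_0$ implies $g'_+(s_0)\le -c$, but not conversely; take $g(s)=(s-s_0)^2-c(s-s_0)$ near $s_0$. An upper bound on a right derivative is an ``infimum $\le$'' condition, and it is not stable under uniform limits. Only the left-derivative half of your argument is right, because $g'_-(s_0)=\sup_{s<s_0}\frac{g(s_0)-g(s)}{s_0-s}$.

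\textbf{A counterexample to closedness.} Put $\gamma=\tfrac14 t_0^{\alpha-\frac12}$. Let $L_1$ be the line through $(0,1)$ and $(s_0,\gamma)$, let $L_2(s)=\gamma-\tfrac c2(s-s_0)$, and set $g=\max\bigl(f_l(\sqrt{\cdot}),L_1,L_2\bigr)$. The following facts make the check routine: $f_u\equiv 1$ on $[0,t_0]$; $f_u(\sqrt{s})\ge 2^{-1/2}$ on $[s_0,2s_0]$, where $L_2\le\gamma$; $L_2\le 0$ beyond $2s_0$; and $f_l(t_0)=2^{-\delta_l}<\gamma$, since by \eqref{Equation: deltal} $\delta_l$ grows like $t_0^2$. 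With these one sees that the same construction with the corner moved to $s_0+\frac1n$ lies in $\scrV_1$, since its slope at $s_0$ is $-(1-\gamma)/(s_0+\frac1n)<-c$. These approximants converge uniformly to $g$, yet $g'_+(s_0)=-\tfrac c2>-c$. So under the paper's stated convention, where $f'(t_0)\le C$ constrains both one-sided derivatives, $\scrV_1$ is not closed. No argument can prove this step as the set is literally defined.

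\textbf{The repair.} Impose only $f'_-(t_0)\le -t_0^{\alpha-\frac32}$, exactly as Section 2 does with $f'_-(1/2)\le-\eta$. Lemma \ref{Lemma: bcbound} and Proposition \ref{Proposition: Derivativebound F2} use only this left derivative, through chord and convexity estimates on $[0,t_0]$. Moreover $\frR_\alpha(f)$ is $C^1$, so invariance of the set is unaffected. With this definition, your left-chord characterization is a family of inequalities between function values. Together with convexity of the limit, it passes to uniform limits and completes the closedness step.
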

\begin{proof} For any $f\in\scrV_1$, by convexity of $f(\sqrt{x})$, we have
$$ -\frac{f'(x)}{2x} \leq \frac{1-f(x)}{x^2}\leq \frac{1-f_l(x)}{x^2} \leq \min(1, x^{-2}) \ ,$$
and thus $\abs{f'(x)}\leq \min(2x,2x^{-1})\leq 2$. Therefore, functions in $\scrV_1$ are uniformly bounded and equicontinuous, so we will use the Arzela-Ascoli theorem. More specifically, we consider any sequence $\{f_i\}_{i=1}^\infty\subseteq\scrV_1$ and perform the following steps of iteration over $m$. Start with $m=0$, take $n_{0,k}=k$ for all $k=1,\dots,n$. For any positive integer $m\geq1$, let $\varepsilon_m = 2^{-m}$ and $L_m =\rbracket{\frac{2t_0}{\varepsilon_m}}^{\frac{1}{\delta_u}}$. Then for any $x\geq L_m$, one has $\sup\limits_{i\geq1}f_i(x)\leq \frac{\varepsilon_m}{2}$. Moreover, since $\sup\limits_{i\geq 1}\sup\limits_{0\leq x\leq L_m} \abs{f_i'(x)}\leq 2$, the Arzela-Ascoli theorem states that there exists subsequence $\{f_{n_{m,k}}\}_{k=1}^\infty\subseteq \{f_{n_{m-1,k}}\}_{k=1}^\infty$ so that $\sup\limits_{i,j\geq1}\norm{f_{n_{m,i}}-f_{n_{m,j}} }_{L^\infty}\leq \varepsilon_m$. Then we take the diagonal terms and get $\{f_{n_{m,m}}\}_{m=1}^\infty$ which is a Cauchy sequence in $L^\infty$ norm, and this finishes the proof that $\scrV_1$ is compact.
\end{proof}

We are now ready to prove the existence of fixed points of $\frR_\alpha$ for any $\alpha\in(0,\frac12)$ using the Schauder fixed-point theorem.
\begin{theorem} For any $\alpha\in(0,\frac12)$, the map $\frR_\alpha:\scrV_1\to\scrV_1$ has a fixed point.
\end{theorem}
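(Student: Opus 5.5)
The plan is to apply the Schauder fixed-point theorem to $\frR_\alpha$ on $\scrV_1\subseteq\scrV_0$ with the $L^\infty$-norm, exactly as in the $\bbR^2$ case. Four ingredients are needed, and all but one are already in hand.

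First, $\scrV_1$ must be nonempty, convex and closed in $L^\infty$.
\begin{itemize}
\item For nonemptiness I would exhibit $f_l$ itself, or a function such as $\max(f_l,\cdot)$ adjusted at $t_0$, and check that it satisfies every constraint.
\item $f_l(\sqrt{x})=(1+t_0^{-2}x)^{-\delta_l}$ is convex, decreasing, equals $1$ at $0$, and satisfies $f_l\leq f_u$; this last inequality needs a short check using $\delta_l\geq\frac12$ and the sizes of $t_0$ and $\delta_u$.
\item The derivative bound at $t_0$ is the delicate part. One has $f_l'(t_0)=-\delta_l t_0^{-1}2^{-\delta_l}$, and I would compare this with $-t_0^{\alpha-\frac32}$ using \eqref{Equation: deltal}. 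Alternatively, nonemptiness follows at once from the fact that $\frR_\alpha$ maps $\scrV_1$ into itself, provided one element is known; so I would verify the bound for one explicit candidate, e.g. $f_u$ smoothed or $\max(f_l,\cdot)$.
\item Convexity of $\scrV_1$ holds because every constraint is convex. The pointwise bounds, $f(0)=1$, monotonicity and convexity of $f(\sqrt{\cdot})$ are all preserved under convex combinations. The condition $f'(t_0)\leq -t_0^{\alpha-\frac32}$ is linear in the one-sided derivatives, and these add under convex combination.
\item Closedness under uniform limits holds for each pointwise constraint. For the derivative condition I would use that uniform limits of functions with $f(\sqrt{\cdot})$ convex have one-sided derivatives controlled by difference quotients: the bound $f'(t_0)\leq -t_0^{\alpha-\frac32}$ is equivalent to a family of difference-quotient inequalities, by convexity in $x^2$, and these pass to the limit.
\end{itemize}

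Second, compactness of $\scrV_1$ is Lemma \ref{Lemma: CompactSetV1HP}.

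Third, the self-map property is the preceding corollary, which combines Propositions \ref{Proposition: RalphaConvexity}, \ref{Proposition: Lowerbound F2}, \ref{Proposition: Derivativebound F2} and \ref{Proposition: Upperbound F2}. I would also note the remaining properties of $\frR_\alpha(f)$: it is even (by definition on $x<0$), continuous, bounded, satisfies $\frR_\alpha(f)(0)=1$, and is nonnegative as an exponential.

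Fourth, continuity is Proposition \ref{Proposition: ContinuityRaHP}.

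With these four ingredients, Schauder's theorem for a continuous map of a nonempty compact convex subset of a Banach space into itself gives $f_*=\frR_\alpha(f_*)$.

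The only point beyond citation, and so the expected main obstacle, is nonemptiness, in particular exhibiting an explicit element satisfying $f'(t_0)\leq -t_0^{\alpha-\frac32}$ together with $f_l\leq f\leq f_u$. My first attempt would be $f=f_l$. It already gives $f_l'(t_0)=-\delta_l 2^{-\delta_l-1}\cdot 2t_0^{-1}$, and the largeness conditions on $t_0$ and the relation between $\delta_l$ and $t_0$ should make this at most $-t_0^{\alpha-\frac32}$, since $t_0^{-1}\gg t_0^{\alpha-\frac32}$ when $\delta_l 2^{-\delta_l}$ is not too small.
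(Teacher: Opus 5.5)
Your route is the paper's: Schauder's theorem on $\scrV_1$, with compactness from Lemma \ref{Lemma: CompactSetV1HP}, continuity from Proposition \ref{Proposition: ContinuityRaHP}, and the self-map corollary. The paper simply asserts that $\scrV_1$ is nonempty, convex and closed. However, both places where you try to supply that argument fail as written.

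\textbf{Nonemptiness.} Your witness $f_l$ is not in $\scrV_1$. By \eqref{Equation: deltal}, $\Gamma(\delta_l+1)/\Gamma(\delta_l+\alpha+\frac12)\approx\delta_l^{\frac12-\alpha}$ must be comparable to $t_0^{1-2\alpha}$. So $\delta_l$ is of order $t_0^2$; the paper itself fixes $(\delta_l+\alpha+\frac12)/t_0^2$. Hence $f_l'(t_0)=-\delta_l2^{-\delta_l}t_0^{-1}$ is exponentially small and nowhere near $-t_0^{\alpha-\frac32}$. Your proviso ``$\delta_l2^{-\delta_l}$ not too small'' is exactly what fails. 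The function $f_u$ fails too: its left derivative at $t_0$ is $0$, and $f_u(\sqrt{\cdot})$ has a concave corner at $t_0^2$. A valid element is $f(x)=\max\bigl(1-\frac{x^2}{2t_0^2},\,f_l(x)\bigr)$:
\begin{itemize}
\item it is convex in $x^2$ (a maximum of convex functions) and equals $1$ at $0$;
\item it lies between $f_l$ and $f_u$: on $[t_0,\sqrt2\,t_0]$ the parabola is at most $\frac12<2^{-\delta_u/2}\le f_u$, beyond $\sqrt2\,t_0$ it is negative, and $f_l\le f_u$;
\item since $f_l(t_0)=2^{-\delta_l}<\frac12$ (as $\delta_l>1$), the parabola is the active piece near $t_0$, so $f'(t_0)=-t_0^{-1}\le-t_0^{\alpha-\frac32}$ because $t_0>1$.
\end{itemize}

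\textbf{Closedness.} Your difference-quotient argument only works for the left derivative. Write $g(s)=f(\sqrt s)$, $s_0=t_0^2$, $m=\frac12t_0^{\alpha-\frac52}$, so the constraint reads $g'(s_0)\le-m$.
\begin{itemize}
\item Since $g'_-(s_0)=\sup_{h>0}\frac{g(s_0)-g(s_0-h)}{h}$, the condition $g'_-(s_0)\le-m$ is an intersection of closed conditions.
\item But $g'_+(s_0)=\inf_{h>0}\frac{g(s_0+h)-g(s_0)}{h}$. Under this section's convention (the bound applies to both one-sided derivatives), $\scrV_1$ is in fact not closed.
\item Counterexample: take $\kappa=\frac12t_0^{\alpha-\frac12}$ and $g_n=\max\bigl(1-\frac{(1-\kappa)s}{s_0+1/n},\,\kappa-\frac m2(s-s_0-\frac1n),\,f_l(\sqrt s)\bigr)$. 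For $t_0$ large, $f_n(x)=g_n(x^2)\in\scrV_1$. Yet the uniform limit has $f'_+(t_0)=-\frac12t_0^{\alpha-\frac32}$.
\end{itemize}
Then $\scrV_1$ is only relatively compact, and Schauder does not apply. The remedy is to impose only $f'_-(t_0)\le-t_0^{\alpha-\frac32}$, as is done with $f'_-(1/2)\le-\eta$ for $\calV_1$. This is all that Lemma \ref{Lemma: bcbound} and Proposition \ref{Proposition: Derivativebound F2} use, namely $f'(x)/x\le f'_-(t_0)/t_0$ for $x<t_0$. Moreover, $\frR_\alpha(f)$ is $C^1$ on $(0,\infty)$ by \eqref{Equation: Derivative R}, since $\frT_\alpha(f)$ is continuous, so the self-map property is unaffected. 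With these two repairs your argument is complete.
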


\begin{proof}
By Proposition \ref{Proposition: ContinuityRaHP} and Lemma \ref{Lemma: CompactSetV1HP}, $\scrV_1$ is convex, closed and compact in the $L^\infty$-norm, and $\frR_\alpha$ continuously maps $\scrV_1$ into itself. The Schauder fixed-point theorem implies that $\frR_\alpha$ has a fixed point in $\scrV_1$. 
\end{proof}

Here we verify that the self-similar profile we found with unbounded support indeed corresponds to a focusing-type self-similar blow-up.
\subsubsection{Property of fixed-point $f_*$ of $\frR_\alpha$}
\begin{proposition} Let $f_*\in\scrV_1$ denote a fixed-point of $\frR_\alpha$. Then we have $2\alpha c_\ell -1>0$ and thus $f_*$ corresponds to a focusing-type blow-up.
\end{proposition}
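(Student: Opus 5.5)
We need to show $2\alpha c_\ell - 1 > 0$, where $c_\ell = 1 + \mathfrak{b}(f_*) = 1 + 4c_{0,\alpha}\int_{\xi>0} f_*(\xi)\xi^{-2\alpha}\,\mathrm{d}\xi$ by \eqref{Equation: c_l HP} (with $c_\theta=1$). The condition is therefore equivalent to
\[
\mathfrak{b}(f_*) > \frac{1-2\alpha}{2\alpha}.
\]
Since $f_*\in\scrV_1$, Lemma \ref{Lemma: bcbound} gives the lower bound $\mathfrak{b}(f_*)\geq b_\alpha'$. It then suffices to check that $b_\alpha' > \frac{1-2\alpha}{2\alpha}$ under the parameter choices \eqref{Equation: t0}--\eqref{Equation: deltal}.

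A more robust route uses the fixed-point structure together with the bounds already proved. By Corollary \ref{Corollary: Concavity_T}, $\mathfrak{b}(f_*) = \lim_{x\to\infty}\frT_\alpha(f_*)(x)$ and $\frT_\alpha(f_*)$ is nondecreasing, since $\frT_\alpha(f)'\geq 0$ from the proof of Proposition \ref{Proposition: RalphaConvexity}. Hence $\mathfrak{b}(f_*)\geq \frT_\alpha(f_*)(t_0)$. By \eqref{Equation: Bound T}, which applies because $f_*'(t_0)\leq -t_0^{\alpha-\frac32}$, we get $\frT_\alpha(f_*)(t_0)\geq c_{0,\alpha}'''t_0^{\frac12-\alpha}$. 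The first condition in \eqref{Equation: t0} was chosen exactly so that $c_{0,\alpha}'''t_0^{\frac12-\alpha} > \frac{1-2\alpha}{2\alpha}$. Chaining these gives $\mathfrak{b}(f_*)>\frac{1-2\alpha}{2\alpha}$, i.e.\ $2\alpha c_\ell - 1 = 2\alpha - 1 + 2\alpha\mathfrak{b}(f_*) > 0$.

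Finally, we translate this into the focusing interpretation via Proposition \ref{Proposition: relation1Dmodel2DSQGHP}. With $\lambda=(2\alpha c_\ell-1)^{\frac{1}{1-2\alpha}}>0$ well defined, the rescaled parameters satisfy $\widetilde{c_\ell}=\lambda^{2\alpha-1}c_\ell=c_\ell/(2\alpha c_\ell-1)>0$ and $\widetilde{c_\theta}=1/(2\alpha c_\ell-1)>0$, with $\widetilde{c_\theta}-2\alpha\widetilde{c_\ell}=-1$. Since $\widetilde{c_\ell}>0$, the spatial variable $x/(T-t)^{\widetilde{c_\ell}}$ concentrates as $t\to T$, so the blow-up is of focusing type.

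The main point to verify is the monotonicity $\frT_\alpha(f_*)'\geq 0$, which lets the value at $t_0$ bound the limit at infinity. This follows from $f_*'\leq 0$ and $F_{1,-2\alpha}'>0$ in the derivative formula. The rest is bookkeeping of the constants in \eqref{Equation: t0}.
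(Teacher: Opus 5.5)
Your second argument is correct as a deduction from the paper's displayed estimates, but it is not the paper's route. The paper does not use the derivative barrier at $t_0$ here. It combines the fixed-point identity $f_*=\frR_\alpha(f_*)$ with the \emph{upper} bound $\frT_\alpha(f_*)(x)\le\min\bigl(\mathfrak b(f_*),\tfrac12\mathfrak c(f_*)x^2\bigr)$ from Corollary \ref{Corollary: Concavity_T}. This gives the power-law \emph{lower} bound $f_*(x)\ge\bigl(1+\tfrac12\mathfrak c(f_*)\bigr)^{-1/2}x^{-\mathfrak b(f_*)/(1+\mathfrak b(f_*))}$ for $x>1$. Finiteness of $\mathfrak b(f_*)=4c_{0,\alpha}\int_0^\infty f_*(\xi)\xi^{-2\alpha}\,\mathrm{d}\xi$ then forces $\frac{\mathfrak b(f_*)}{1+\mathfrak b(f_*)}+2\alpha>1$, i.e.\ $\mathfrak b(f_*)>\frac{1-2\alpha}{2\alpha}$. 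That is a self-consistency property of fixed points: it needs only $\mathfrak b(f_*),\mathfrak c(f_*)<\infty$, not the value of any explicit constant.

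You instead bound $\mathfrak b$ from below directly, $\mathfrak b(f_*)\ge\frT_\alpha(f_*)(t_0)\ge c_{0,\alpha}'''t_0^{\frac12-\alpha}>\frac{1-2\alpha}{2\alpha}$. This is shorter. Despite your remark, it never uses $f_*=\frR_\alpha(f_*)$, so it actually gives the inequality for every $f\in\scrV_1$. The monotonicity step is also unnecessary, since the last line of Corollary \ref{Corollary: Concavity_T} already gives $\frT_\alpha(f)(t_0)\le\mathfrak b(f)$. Note that the condition you take from \eqref{Equation: t0} is equivalent to $\delta_u>1-2\alpha$, which is exactly the threshold the paper's tail argument reaches intrinsically for the exponent $\frac{\mathfrak b(f_*)}{1+\mathfrak b(f_*)}$.

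The price is that your route stands or falls with the explicit constant in \eqref{Equation: Bound T}, which you take on trust and which is fragile. Its first line uses $F_{1,-2\alpha}'$ where the kernel of $\frT_\alpha$ is $F_{1,-2\alpha}$. Moreover, the stated closed form of $c_{0,\alpha}'''$ grows like $\alpha^{-1}$ as $\alpha\to0$. By contrast, both $2c_{0,\alpha}\int_0^1\xi^4F_{1,-2\alpha}'(\xi)\,\mathrm{d}\xi$ and the constant the argument actually yields, $2c_{0,\alpha}\int_0^1 s^{2-2\alpha}F_{1,-2\alpha}(1/s)\,\mathrm{d}s$, stay bounded. If $c_{0,\alpha}'''$ has to be lowered, the calibration of $t_0$ in \eqref{Equation: t0} no longer delivers $\frac{1-2\alpha}{2\alpha}$ without enlarging $t_0$, while the paper's proof of this proposition is unaffected.

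Finally, your first suggestion via $b_\alpha'$ cannot work in general. The lower inequality in \eqref{Equation: deltal} is precisely $c_{0,\alpha}''\le 4\delta_l t_0^{-2}$, and it forces $b_\alpha'\le\frac{3}{1+\alpha}\cdot\frac{\delta_l}{\delta_l+\alpha-\frac12}$. For the large $\delta_l$ forced by \eqref{Equation: t0}--\eqref{Equation: deltal}, this is about $\frac{3}{1+\alpha}$, while $\frac{1-2\alpha}{2\alpha}\to\infty$ as $\alpha\to0$.
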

\begin{proof} 
Because $f_*\in\scrV_1$, Corollary \ref{Corollary: Concavity_T} implies that $\frT_\alpha(f_*)(x)\leq \min(\mathfrak{b}(f_*), \frac{\mathfrak{c}(f_*)}{2}x^2)$, so for $x>1$, one has
 \begin{equation*}
\begin{split}
& f_*(x) = \frR_\alpha(f_*)(x) \\
 = & \exp\rbracket{-\int_0^x \frac1y \rbracket{ 1-\frac{1}{1+\frT_\alpha(f_*)(y)} }  \, dy }   \\
 \geq & \exp\rbracket{-\int_0^1 \frac1y \rbracket{ 1-\frac{1}{1+\frac{\mathfrak{c}(f_*)}{2}y^2} }  \, dy -\int_1^x \frac1y \rbracket{ 1-\frac{1}{\mathfrak{b}(f_*)} }  \, dy  } \\
 = & \rbracket{1+\frac12 \mathfrak{c}(f_*)}^{-\frac12} x^{-\frac{\mathfrak{b}(f_*)}{1+\mathfrak{b}(f_*)}} \ .
 \end{split} 
\end{equation*}
Consequently, the boundedness of 
\begin{align*}
\mathfrak{b}(f_*) & = 4 c_{0,\alpha} \int_{\xi>0} f_*(\xi) \xi^{-2\alpha}\, \idiff \xi \\
& \geq 4 c_{0,\alpha}  \rbracket{1+\frac12 \mathfrak{c}(f_*)}^{-\frac12} \int_{\xi>1} \xi^{-\frac{ \mathfrak{b}(f_*)}{1+\mathfrak{b}(f_*)}-2\alpha}\, \idiff \xi 
\ , \end{align*}
implies that 
$\frac{ \mathfrak{b}(f_*)}{1+\mathfrak{b}(f_*)}+2\alpha>1$, which implies that $ \mathfrak{b}(f_*) > \frac{1}{2\alpha}-1$, and hence $c_\ell  = 1 + \mathfrak{b}(f_*)>\frac1{2\alpha}$.
\end{proof}

To obtain higher regularity of $f$, we will show in the following lemma on the regularity for operator $\frT_\alpha$.

\begin{lemma}\label{Lemma: RegMapTaHP} Given $f\in\scrV_1$, if $f'\in H^s(\R)$ for some $s \geq 1$, then $\frT_\alpha(f)'\in H^{s+1-2\alpha-2\eps}(\R)$ for any $0<\eps<1/2-\alpha$.
\end{lemma}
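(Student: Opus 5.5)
The plan is to view $\frT_\alpha(f)$ as a Fourier multiplier applied to $\Theta = xf$, up to the smooth correction coming from $U'(0)$, and then read off the gain of $1-2\alpha$ derivatives. Recall $U(x) = \frac{2c_{0,\alpha}}{-2\alpha}\int \frac{\xi f(\xi)}{|x-\xi|^{2\alpha}}\idiff\xi$. The kernel $|x|^{-2\alpha}$ in one dimension is, up to a constant, the Riesz potential $(-\Delta)^{-(1-2\alpha)/2}$, so $U = C_\alpha (-\Delta)^{-\frac{1-2\alpha}{2}}\Theta$ and $U' = C_\alpha (-\Delta)^{-\frac{1-2\alpha}{2}}\Theta'$ with $\Theta' = f + xf'$. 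Then
\[
\frT_\alpha(f)' = \Big(\frac{U}{x}\Big)' = \frac{U'(x)x - U(x)}{x^2} = \frac{1}{x^2}\int_0^x y\,U''(y)\idiff y,
\]
which is an averaging (Hardy-type) operator applied to $U''$. Equivalently, the representation $\frT_\alpha(f)' = -2c_{0,\alpha}\int f'(\xi)\xi^{-2\alpha}F_{1,-2\alpha}'(x/\xi)\idiff\xi$ shows it is a Mellin-type convolution of $f'$ with a kernel homogeneous of degree $-2\alpha$.

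The argument proceeds in three steps.

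\textbf{Step 1 (the main term).} Write $U'' = C_\alpha(-\Delta)^{-\frac{1-2\alpha}{2}}(2f' + xf'')$. Since $xf'' = (xf')' - f'$, the quantity $\partial_x^{-1}U''$ splits into a multiplier of order $-(1-2\alpha)$ acting on $f'$ and on $xf'$. For $f'\in H^s$, the near part of this gains $1-2\alpha$ derivatives. The weight $x$ causes no harm locally. Globally, I would use the decay $|f'(x)|\le \min(2x,2x^{-1})$ together with $f\le f_u$ from $\scrV_1$, so that $xf'$ is bounded with tails controlled.

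\textbf{Step 2 (the Hardy average).} Next I need that $g\mapsto x^{-2}\int_0^x y\,g(y)\idiff y$ preserves $H^{\sigma}$ for $\sigma = s+1-2\alpha-2\eps$. Near $x=0$, Taylor expansion shows it acts like a smooth operator, since $f$ is even and $U$ is odd, so $U/x$ is smooth whenever $U$ is. Away from $0$ it is a smooth-coefficient combination of $U'/x$ and $U/x^2$, which is fine locally.

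\textbf{Step 3 (low frequencies).} The Riesz potential is singular at frequency zero, and $f\notin L^2$ in general because the decay is only $x^{-\delta_u}$ with $\delta_u<1$. To handle this I would split $f' = f'\chi_1 + f'(1-\chi_1)$. The far piece produces a smooth function near any compact set, and its decay at infinity follows from the kernel bound $|F_{1,-2\alpha}'(t)|\lesssim t^{-2-2\alpha}$ at infinity (via $F'(1/t) = t^{2+2\alpha}F'(t)$), which gives square integrability of $\frT_\alpha(f)'$ at infinity in $L^2$.

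I expect this low-frequency and tail control to be the main obstacle. Membership in the global $H^\sigma(\R)$ requires $L^2$ decay of $\frT_\alpha(f)'$ and of its fractional derivatives, whereas the Riesz potential only maps $L^p\to L^q$ by Hardy–Littlewood–Sobolev and not $L^2\to L^2$. The $\eps$ loss is meant to absorb exactly this. Concretely, I would interpolate between an $H^{s}\to H^{s+1-2\alpha}$ bound for the high-frequency part, via a Littlewood–Paley cutoff, and the HLS / pointwise decay estimate for the low-frequency part. The loss $2\eps$ buys $\dot H$ versus $H$ control through Sobolev embedding, requiring $2\eps < 1-2\alpha$, which matches the stated range of $\eps$.
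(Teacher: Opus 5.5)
There is a genuine gap, and it lies in the derivative count of Steps 1--2, not at low frequencies. Step 1 yields at best $U'=C(-\Delta)^{-\frac{1-2\alpha}{2}}(f+xf')\in H^{s+1-2\alpha}_{loc}$, i.e.\ $U''\in H^{s-2\alpha}_{loc}$, because $xf''$ is only in $H^{s-1}_{loc}$. Step 2 applies $Ag(x)=x^{-2}\int_0^x y\,g(y)\,\mathrm{d}y=\int_0^1 t\,g(tx)\,\mathrm{d}t$ to $g=U''$ and asks only that $A$ preserve $H^\sigma$, with $\sigma=s+1-2\alpha-2\eps$. That requires $U''\in H^\sigma$, which fails in general: $U''=2\frT_\alpha(f)'+x\frT_\alpha(f)''$ is a full derivative rougher than $\frT_\alpha(f)'$ away from the origin. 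Away from $0$ you can repair this through $\frT_\alpha(f)'=U'/x-U/x^2$, as you indicate. At $x=0$, however, $A$ commutes with dilations and gains nothing. Your justification that ``$U/x$ is smooth whenever $U$ is'' is a $C^\infty$ statement and does not survive at finite Sobolev regularity. For odd $U$ with only $U'\in H^r$ you get $U/x=\int_0^1U'(tx)\,\mathrm{d}t\in H^r$, hence $(U/x)'\in H^{r-1}$. Generic odd functions such as $U=x|x|^\gamma$ show this is essentially sharp. With $r=s+1-2\alpha$ you land essentially one derivative short of the claim.

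The missing idea is that $U$ is not a generic odd function. It is a multiplier $m(D)$ of order $2\alpha-1$ applied to $xf$, and the factor $x$ must be cancelled against the division by $x$. For instance, $U=x\,m(D)f+[m(D),x]f$, where $[m(D),x]$ is proportional to $m'(D)$ and has order $2\alpha-2$. The paper implements this on the Fourier side. Because multiplication by $x$ is $i\partial_\xi$, one gets $\big(\calF(\frT_\alpha(f)')/\xi\big)'\sim m(\xi)\big(\calF(f')/\xi\big)'$. This is an identity between the two $L^2$ functions $\calF(\frT_\alpha(f)')$ and $\calF(f')$, so the non-$L^2$ objects $f$, $xf'$, $U$ never have to be placed in a Sobolev space, and no local-to-global patching is needed. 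Integrating over $[\xi,\infty)$ and then by parts gives, for $\xi\ge1$, $\calF(\frT_\alpha(f)')(\xi)\sim m(\xi)\calF(f')(\xi)+\xi\int_\xi^\infty m'(\eta)\calF(f')(\eta)\eta^{-1}\,\mathrm{d}\eta$. The tail term is handled by Cauchy--Schwarz, and that is where the $\eps$ is spent, not on an $\dot H$ versus $H$ or HLS issue. The low frequencies, which you single out as the main obstacle, are immediate: Corollary~\ref{Corollary: Concavity_T} gives $0\le\frT_\alpha(f)'(x)\le\min(2\mathfrak b(f)/x,\mathfrak c(f)x)$, so $\frT_\alpha(f)'\in L^2(\R)$, which controls $|\xi|\le2$. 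Your Mellin-kernel formula for $\frT_\alpha(f)'$ does encode the needed cancellation, but none of Steps 1--3 actually uses it.
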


\begin{proof}
 According to Corollary\ref{Corollary: Concavity_T}, 
    \[
    0\leq\mathfrak{T}_{\alpha}(f)^{\prime}(x)\leq\frac{2\mathfrak{T}_{\alpha}(f)(x)}{x}\leq \min\bigg(\frac{2\mathfrak{b}(f)}{x},\mathfrak{c}(f)x\bigg).
    \]
    In particular, this implies
    \[
    \mathfrak{T}_{\alpha}(f)^{\prime}(x)\in L^2(\R),
    \]
   We recall that\[
    \frT_\alpha(f)'=\bigg(\frac{U}{x}\bigg)'\sim\bigg(\frac{(-\Delta) ^{\alpha-1/2}(xf)}{x}\bigg)',
    \]
    up to a constant factor. For simplicity we suppress this harmless constant factor in the estimates below.

We next derive a convenient representation for the Fourier transform of $\frT_\alpha(f)'$.
Set
\[
h(x):=\frac{(-\Delta)^{\alpha-1/2}(xf)(x)}{x},
\qquad\text{so that}\qquad
\frT_\alpha(f)' \sim h'(x).
\]
Since $x h(x)=(-\Delta)^{\alpha-1/2}(xf)(x)$, taking the Fourier transform gives
\[
i\partial_\xi \calF(h)(\xi)
=
\calF\!\left((-\Delta)^{\alpha-1/2}(xf)\right)(\xi)
=
(1+\xi^2)^{\alpha-1/2}\calF(xf)(\xi).
\]
Moreover,
\[
\calF(xf)(\xi)=i\,\partial_\xi \calF(f)(\xi),
\qquad
\calF(f')(\xi)=i\xi\,\calF(f)(\xi),
\]
hence
\[
\calF(xf)(\xi)
=
\left(\frac{\calF(f')(\xi)}{\xi}\right)'
\]
for $\xi\neq 0$. Therefore,
\[
i\partial_\xi \calF(h)(\xi)
=
(1+\xi^2)^{\alpha-1/2}
\left(\frac{\calF(f')(\xi)}{\xi}\right)'.
\]
Since
\[
\calF(\frT_\alpha(f)')(\xi)\sim i\xi\,\calF(h)(\xi),
\]
we obtain
\[
\left(\frac{\calF(\frT_\alpha(f)')(\xi)}{\xi}\right)'
\sim
(1+\xi^2)^{\alpha-1/2}
\left(\frac{\calF(f')(\xi)}{\xi}\right)'
\]
for $\xi\neq 0$. Now fix $\xi>1$. Integrating the above identity on $[\xi,R]$ yields
\[
\frac{\calF(\frT_\alpha(f)')(R)}{R}
-
\frac{\calF(\frT_\alpha(f)')(\xi)}{\xi}
\sim
\int_\xi^R
(1+\eta^2)^{\alpha-1/2}
\left(\frac{\calF(f')(\eta)}{\eta}\right)'
\,\mathrm d\eta.
\]
Since $\calF(\frT_\alpha(f)')\in L^2(\R)$, we have
\[
\calF(\frT_\alpha(f)')(R)\to 0
\qquad\text{along a subsequence }R\to+\infty,
\]
and hence
\[
\calF(\frT_\alpha(f)')(\xi)
\sim
\xi
\left(
-\int_\xi^{+\infty}
(1+\eta^2)^{\alpha-1/2}
\left(\frac{\calF(f')(\eta)}{\eta}\right)'
\,\mathrm d\eta
\right).
\]

    Since
    \[
    \|f'\|_{H^s(\R)}^2=\int_\R (1+\xi^2)^{s} |\calF(f')(\xi)|^2 \idiff\xi<+\infty,
    \]
    for $\xi\geq 1$ we now integrate by parts in $\eta$ and estimate,
    \[
    \begin{aligned}
        &\bigg|(1+\xi^2)^{s/2+1/2-\alpha-\eps}\calF\bigg(\frT_\alpha(f)'\bigg)(\xi)\bigg|\\
        \sim & \bigg|(1+\xi^2)^{s/2+1/2-\alpha-\eps}
\xi
\left(
-\int_\xi^{+\infty}
(1+\eta^2)^{\alpha-1/2}
\left(\frac{\calF(f')(\eta)}{\eta}\right)'
\,\mathrm d\eta
\right)\bigg|
         \\ 
        \leq &(1+\xi^2)^{s/2}|\calF(f')|+(1+\xi^2)^{s/2}\bigg|\int_\xi^{+\infty} \frac{\xi(1+\xi^2)^{1/2-\alpha-\eps}}{\eta}\bigg((1+\eta^2)^{\alpha-1/2}\bigg)'\calF(f')(\eta) \idiff \eta\bigg|\\
        \lesssim & (1+\xi^2)^{s/2}|\calF(f')|+\int_\xi^{+\infty}\bigg| \frac{\xi(1+\xi^2)^{s/2+1/2-\alpha-\eps}}{\eta(1+\eta^2)^{1/2-\alpha}}\frac{\calF(f')(\eta)}{\eta}\bigg| \idiff \eta\\
        \leq & (1+\xi^2)^{s/2}|\calF(f')| +\int_\xi^{+\infty}\bigg| (1+\eta^2)^{s/2-\eps}\frac{\calF(f')(\eta)}{\eta}\bigg| \idiff \eta.
        \end{aligned}\]
        Applying Cauchy–Schwarz to the integral term gives
        \[
        \begin{aligned}
        & \int_\xi^{+\infty}\bigg| (1+\eta^2)^{s/2-\eps}\frac{\calF(f')(\eta)}{\eta}\bigg| \idiff \eta\\
        \leq&  \|f'\|_{H^s(\R)}\left(\int_\xi^{+\infty}\left(\frac{(1+\eta^2)^{-\eps}}{\eta}\right)^2\idiff\eta\right)^{1/2}
        \lesssim \|f'\|_{H^s(\R)}\xi^{-1/2-\eps}.
    \end{aligned}
    \]
    Hence
\[
\left|
(1+\xi^2)^{s/2+1/2-\alpha-\eps}
\calF(\frT_\alpha(f)')(\xi)
\right|
\lesssim
(1+\xi^2)^{s/2}|\calF(f')(\xi)|
+
\|f'\|_{H^s(\R)}\,\xi^{-1/2-\eps}.
\]
Since the right-hand side belongs to $L^2([1,\infty))$, it follows that
\[
\big\|
(1+\xi^2)^{\frac s2+\frac12-\alpha-\eps}
\calF(\frT_\alpha(f)')
\big\|_{L^2([1,\infty))}
<+\infty.
\]
    On the low-frequency region, the fact that $\mathfrak{T}_{\alpha}(f)^{\prime}(x)\in L^2(\R)$ implies 
     \[
    \bigg\|(1+\xi^2)^{s/2+1/2-\alpha-\eps}\calF\bigg(\frT_\alpha(f)'\bigg)\bigg\|_{L^2([-2,2])}<+\infty.
    \]
    Recall that $f$ is an even function. Combining the high- and low-frequency estimates, we conclude
    \[
    \|\frT_\alpha(f)'\|_{H^{s+1-2\alpha-2\eps}(\R)}=\bigg\|(1+\xi^2)^{s/2+1/2-\alpha-\eps}\calF\bigg(\frT_\alpha(f)'\bigg)\bigg\|_{L^2(\R)}<+\infty,
    \]
    which is desired.
\end{proof}

Thus we have the following result on the smoothness for the fixed-point $f_*$ of operator $\frR_\alpha$.
\begin{proposition}\label{Proposition: SmoothnessProfileHP}
For any $\alpha\in(0,\frac12)$, let $f_*\in\scrV_1$ be a fixed-point of $\frR_\alpha$.  Then  $f_*$ is smooth.
\end{proposition}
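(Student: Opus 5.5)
We bootstrap regularity via the fixed-point identity combined with Lemma \ref{Lemma: RegMapTaHP}. Write $T:=\frT_\alpha(f_*)$. The fixed-point relation $f_*=\frR_\alpha(f_*)$ together with \eqref{Equation: Derivative R} gives
\[
f_*'(x) = -\frac{f_*(x)\,T(x)}{x\,(1+T(x))} .
\]
We first establish a starting regularity $f_*'\in H^1(\R)$. Since $f_*\in\scrV_1$, the bound $|f_*'(x)|\leq\min(2x,2x^{-1})$ from the proof of Lemma \ref{Lemma: CompactSetV1HP} shows $f_*'\in L^2$. Next, $T\geq 0$, and Corollary \ref{Corollary: Concavity_T} gives $0\leq T'(x)\leq \min(2\mathfrak b/x,\mathfrak c x)$ and $T(x)\leq\min(\mathfrak b,\mathfrak c x^2/2)$. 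Hence $T(x)/x$ is bounded, Lipschitz near $0$, and decays like $1/x$ at infinity. Differentiating the displayed identity produces terms of the form $f_*'\,T/(x(1+T))$, $f_*T'/(x(1+T)^2)$, and $f_*T/(x^2(1+T))$. Each is bounded near $0$, because $T\lesssim x^2$ and $T'\lesssim x$, and each is $O(x^{-2})$ at infinity. Thus $f_*''\in L^2$, and $f_*'\in H^1$, which is odd.

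For the inductive step, assume $f_*'\in H^s$ with $s\geq1$. Lemma \ref{Lemma: RegMapTaHP} gives $T'\in H^{s+\kappa}$ with $\kappa:=1-2\alpha-2\eps>0$. We then read off $f_*'$ from the identity. Set $G(x):=T(x)/x$, which is odd, and write
\[
f_*'=-f_*\,\frac{G}{1+T} .
\]
To obtain $f_*'\in H^{s+\kappa'}$ for some fixed $\kappa'\in(0,\kappa]$ (capped at $1$ so that no regularity is lost from the factor $f_*$), we need three ingredients.
\begin{enumerate}
\item $T$ itself has $T'\in H^{s+\kappa}$, so $1/(1+T)$ is as regular as $T$ modulo constants, since $1+T\geq1$. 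Moser-type composition estimates in $H^{s+\kappa}$ handle $\Phi(T)=1/(1+T)$.
\item $G=T/x$ is one derivative less regular than $T$ in the worst case, but it satisfies $G(x)=\int_0^1 T'(tx)\,dt$. Hence $G$ inherits the $H^{s+\kappa}_{loc}$ regularity of $T'$ near the origin. Away from the origin, $1/x$ is smooth and bounded with all derivatives.
\item $f_*\in H^{s+1}_{loc}$ with bounded derivatives, and products of functions in $H^{\sigma}\cap W^{1,\infty}$ with $\sigma>1/2$ stay in $H^\sigma$ (algebra property).
\end{enumerate}
Combining these gives $f_*'\in H^{s+\kappa'}$, and iterating gives $f_*'\in H^s$ for every $s$. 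Sobolev embedding then yields $f_*\in C^\infty(\R)$.

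The main obstacle is the term $G=T/x$, which requires regularity estimates of the averaging operator $T\mapsto\int_0^1T'(tx)\,dt$ in $H^\sigma$, together with control of tails at infinity. My first attempt is to split with a smooth cutoff $\chi$ near $0$. Near the origin, the dilation-average operator is bounded on $H^\sigma_{loc}$ since $\int_0^1 t^{\sigma-1/2}\,dt$-type bounds converge for $\sigma$ in our range; if needed, I would take derivatives explicitly, using $\partial_x^k G=\int_0^1 t^k T^{(k+1)}(tx)\,dt$ for integer parts. Away from the origin, one multiplies by the smooth symbol $(1-\chi)/x$. A minor secondary issue is that $T$ tends to the nonzero constant $\mathfrak b$ at infinity. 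Working with $T'$ and $T-\mathfrak b$, and using only $W^{k,\infty}$ bounds for the composition, avoids this.
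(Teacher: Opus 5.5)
Your proposal is correct and follows the paper's own route. You obtain $f_*'\in H^1(\R)$ by differentiating the fixed-point identity $f_*'=-f_*\frT_\alpha(f_*)/(x(1+\frT_\alpha(f_*)))$ with the bounds from Corollary~\ref{Corollary: Concavity_T}, and then bootstrap via Lemma~\ref{Lemma: RegMapTaHP}; your dilation-average formula $G(x)=\int_0^1 T'(tx)\,dt$ for $T/x$ is the paper's Hardy-inequality step written out explicitly, and your composition remark for $1/(1+T)$ fills in the paper's ``standard product estimates.''
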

\begin{proof}
Since $f_* \in \scrV_1$, it is continuous and satisfies $\frT_\alpha(f_*) \ge 0$. 
By the previously established upper bounds for 
$f_*$, $\frT_\alpha(f_*)$, $f_*'$, and $\frT_\alpha(f_*)'$, we have
\[
f_*,\, \frT_\alpha(f_*) \in L^\infty(\R),
\]
and
\[
f_*',\ \frT_\alpha(f_*)',\ \frac{f_*'}{x},\ \frac{\frT_\alpha(f_*)'}{x},\ 
\frac{\frT_\alpha(f_*)}{x},\ \frac{\frT_\alpha(f_*)}{x^2}
\in L^2(\R)\cap L^\infty(\R).
\]

Using the fact that $f_*$ is a fixed-point of $\frR_\alpha$,
\[
\frR_\alpha(f_*)' = f_*'
= -\frR_\alpha(f_*) \frac{\frT_\alpha(f_*)}{x\bigl(1+\frT_\alpha(f_*)\bigr)}
= -f_* \frac{\frT_\alpha(f_*)}{x\bigl(1+\frT_\alpha(f_*)\bigr)},
\]
we compute
\[
(f_*')'
= -f_*' \frac{\frT_\alpha(f_*)}{x(1+\frT_\alpha(f_*))}
- f_* \frac{\frT_\alpha(f_*)'}{x(1+\frT_\alpha(f_*))}
+ f_* \frac{\frT_\alpha(f_*)}{x^2(1+\frT_\alpha(f_*))}
+ f_* \frac{\frT_\alpha(f_*)\,\frT_\alpha(f_*)'}{x(1+\frT_\alpha(f_*))^2}.
\]
Each term on the right-hand side belongs to $L^2(\R)$ by the bounds listed above. 
Hence $(f_*')' \in L^2(\R)$, and therefore $f_*' \in H^1(\R)$.

Next, by Lemma \ref{Lemma: RegMapTaHP}, if $f' \in H^s(\R)$ for some $s \ge 1$, then
\[
\frT_\alpha(f)' \in H^{s+\frac{1-2\alpha}{2}}(\R).
\]
Moreover, by Hardy's inequality,
\[
\frac{\frT_\alpha(f)}{x} \in H^{s+\frac{1-2\alpha}{2}}(\R).
\]
Using the identity
\[
f_*' = - f_* \frac{\frT_\alpha(f_*)}{x\bigl(1+\frT_\alpha(f_*)\bigr)},
\]
together with standard product estimates in Sobolev spaces, we deduce that
\[
f_*' \in H^{s+\frac{1-2\alpha}{2}}(\R).
\]

Starting from $s=1$ and iterating this argument, we obtain successive gains of regularity. 
This bootstrapping procedure yields the smoothness of $f_*$.
\end{proof}

\section{Numerical Simulation for 1D reduction of SQG}\label{Section: Numerical Simulation}
For the one-dimensional nonlocal operators, we do not approximate the principal value integrals by naive pointwise quadrature at the singularity. Instead, on the computational grid (uniform in Section 4.1 and nonuniform in Section 4.2), we use a cubic-spline product-integration discretization: the singular kernel is integrated analytically against local spline basis functions. This produces finite discrete weights that incorporate the principal-value cancellation at the discrete level. In particular, diagonal and endpoint contributions are handled separately using closed-form limit formulas. We also exploit the odd-symmetry structure of the profile to reduce the full-line operator to a half-line computation with reflected contributions.

Near the singularity, no ad hoc $\varepsilon$-mollification is introduced in the operator itself; instead, the singular behavior is handled analytically through the matrix-entry formulas. In the implementation, special values at transition points (corresponding to normalized distances 0 and 1) are evaluated by explicit formulas, and different expressions are used in near-field and far-field regimes (including asymptotic expansions for large normalized distances) to reduce cancellation error.

For the velocity-level operator, we first approximate the derivative-level nonlocal operator and then reconstruct the velocity by cumulative summation, which is consistent with the structure of the reduced equation. The fixed-point iteration is stopped when the changes in the profile variable and scaling parameter are below the prescribed tolerance. In addition, numerical reliability is assessed by mesh-refinement and truncation-sensitivity checks (in particular when the profile is not compactly supported), and by verifying that the computed profiles satisfy the expected qualitative properties predicted by the analysis.

We emphasize that the existence and smoothness of the self-similar profiles are proved analytically in Sections 2–3. The numerical computations below are intended as qualitative consistency checks and visualization; they are not meant as a high-precision numerical study.  For the 1D nonlocal operators we assemble the dense $N\times N$ product-integration matrix (here $N=5\times 10^4$). The computations reported below were performed on a machine with 512 GB RAM; this approach is memory intensive and is used here for qualitative visualization rather than high-precision large-scale computation.


\subsection{Numerical Simulation for 1D reduction of gSQG on $\bbR^{2}$}
We use non-uniform mesh on $[0,5]$ with $N=5\times10^4$ grid points, specified by $x_i=5(i/N)^2$ for $i=0,1,\dots,N-1$. In other words, $\Delta x = 2\times10^{-9}$ at the origin and $\Delta x$ gradually increases to $2\times10^{-4}$ as $x$ approaches the endpoint $x=5$. The numerical method is to compute dense matrices with size $N\times N$ to approximate the operator $u_x  = \calP_{1,\alpha}\omega$ and to use the cumulative sum of $u_x$ to approximate the velocity $u=\calP_{0,\alpha}\omega$ and then $v=c_\ell x + u$. Thus, we use the iterative method in \eqref{Equation: fixedPointEqn} to solve the numerical approximation of a self-similar profile to \eqref{Equation: SelfSimilarProfileOmega}. We take $\varepsilon=10^{-7}$ and run the iteration until both changes in $w=-xf$ and $c_\ell $ are smaller than $\varepsilon$. 

\begin{figure}[!ht] 
\includegraphics[height=0.365\textwidth]{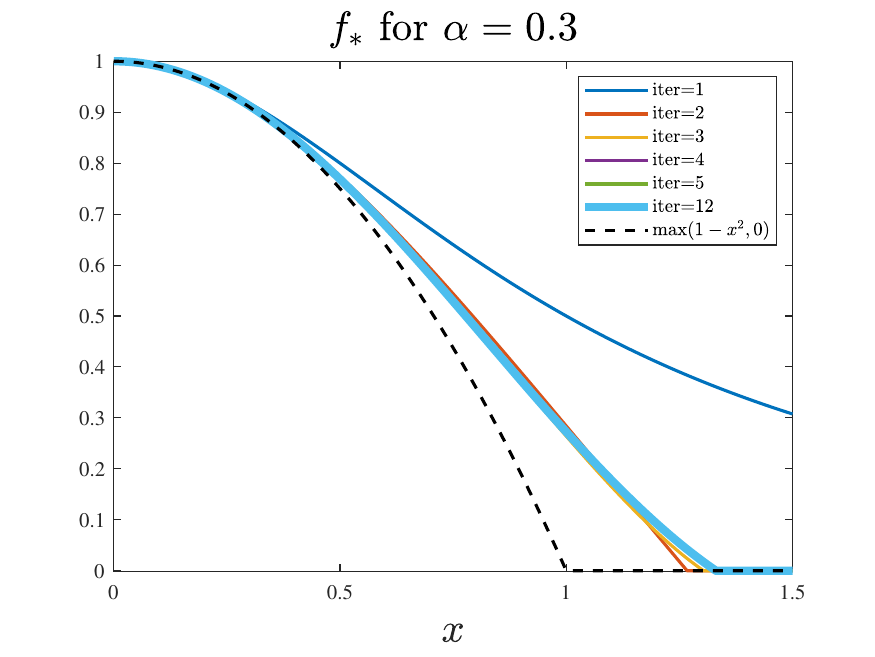}
\includegraphics[height=0.365\textwidth]{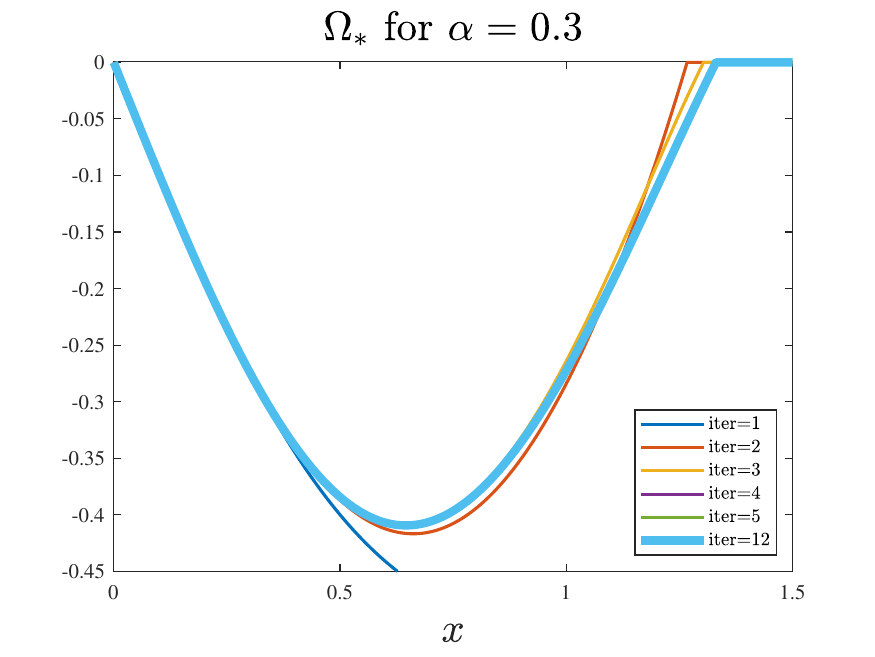}
\caption{Visualization for a self-similar profile for \eqref{Equation: fixedPointEqn}, the 1D reduction of gSQG on $\bbR^2$. Configuration:  $\alpha=0.3, N=5\times10^4$, domain truncation $[0,5]$, tolerance $\varepsilon=10^{-7}$, and the non-uniform mesh is $x_i=5(i/N)^2$. In the left subplot, we plot the iterations and the approximate profile for $f_*$, together with the lowerbound $\max(0,1-x^2)$. In the right subplot, we plot the iterations and the approximate profile for $\Omega_*=-xf_*$.  }
    \label{figure: 1DgSQGR2}
\end{figure}

Figure \ref{figure: 1DgSQGR2}  shows the functions during the iteration of $\scrR_\alpha$ for $\alpha=0.3$. In the plot, the colored curves are functions during iteration. The colored curve in bold is the profile at the end of iterations, which indicates the approximate fixed-point. The figures are consistent with our theoretical results that $f_*$ is monotone decreasing, compact support of $f_*$, and normalized scaling factors $\widetilde{c_\omega} = \widetilde{c_\ell } = -\frac{1}{2-2\alpha}$. We observed that the computed profiles and scaling parameters are stable under moderate changes of $N$.  For example, when we fix the domain truncation $[0,5]$ and test for $N=5\times10^4$ and $N=1\times10^4$, the residual error for $f_*$ is $3.06\times10^{-7}$. We do not need to change the domain truncation because the interval $[0,5]$ 
is sufficient to cover the compact support of $f_*$.

Moreover, we also study how approximate profiles change with respect to $\alpha\in(0,1)$. We use non-uniform mesh on $[0,5]$ with $N=1\times10^4$ grid points, specified by $x_i=5(i/N)^2$ for $i=0,1,\dots,N-1$. In other words, $\Delta x = 5\times10^{-8}$ at the origin and $\Delta x$ gradually increases to $1\times10^{-3}$ as $x$ approaches the endpoint $x=5$. Other setups are the same as in Figure \ref{figure: 1DgSQGR2}. We plot in Figure \ref{figure: 1DgSQGR2_varying_alpha}  self-similar profiles for different values of $\alpha\in(0,1)$. 

In particular, as $\alpha\to 0$ we formally have
$u=\calP_{0,\alpha}\omega\sim(-\Delta)^{\alpha-1}\omega \to (-\Delta)^{-1}\omega$. 
In this limiting case, the corresponding profile $\Omega$ to equation \eqref{Equation: SelfSimilarProfileOmega} should therefore be an eigenfunction of $(-\Delta)^{-1}$ on its support with Dirichlet boundary conditions.  Consequently, we expect
$f_*(x) \to \sin(\sqrt{6}\,x)/(\sqrt{6}\,x).$
Here the scaling factor $\sqrt{6}$ is chosen so that the normalization $f_*(x)\sim 1-x^2+O(x^4)$ holds near the origin. Although we do not establish the continuity of $f_*$ in $\alpha$, 
the limiting behavior is clearly observed numerically in Figure \ref{figure: 1DgSQGR2_alpha_001}, 
which also supports the accuracy of our computations.

\begin{figure}[!ht] 
\includegraphics[height=0.366\textwidth]{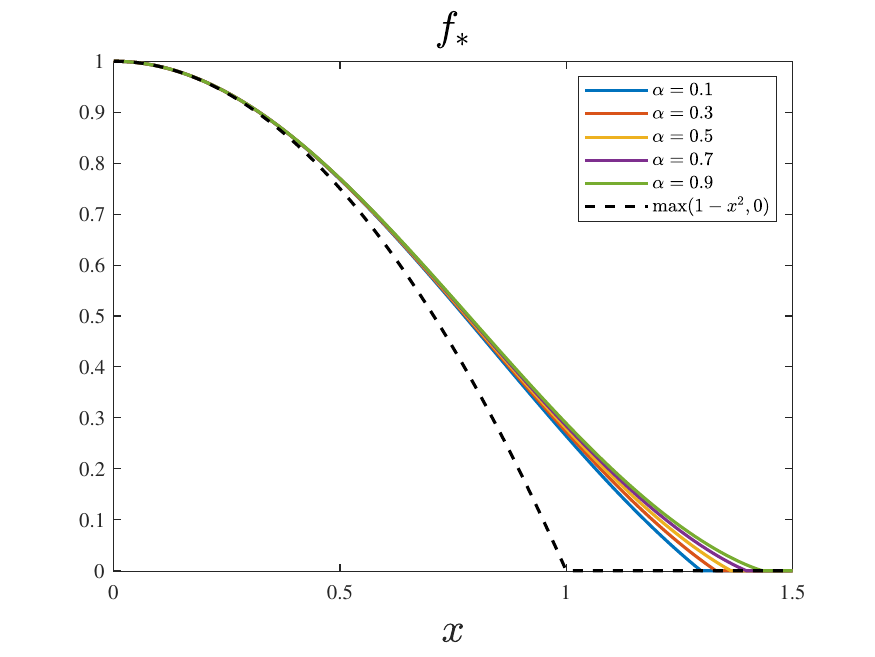}
\includegraphics[height=0.366\textwidth]{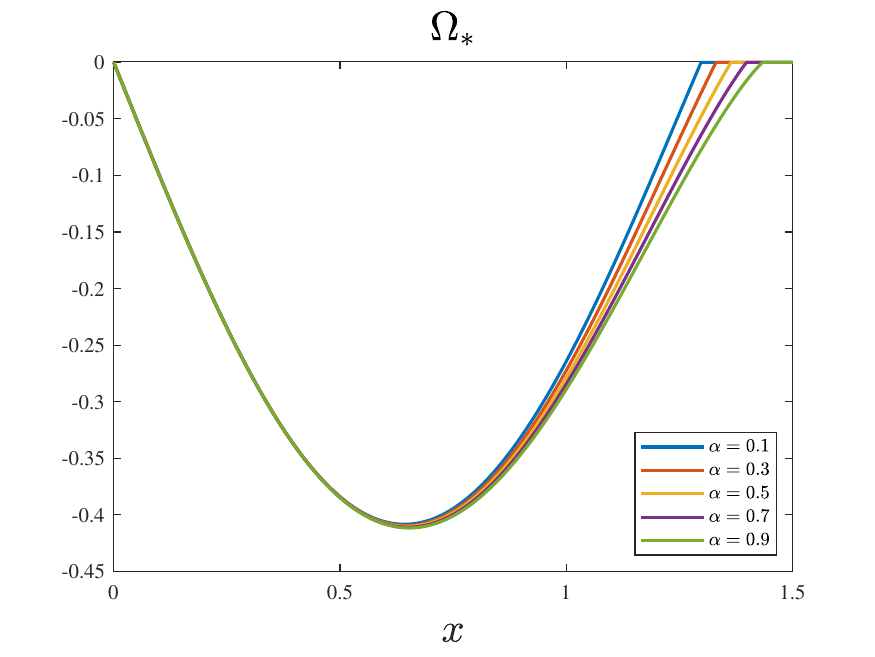}
\caption{Visualization for self-similar profiles for \eqref{Equation: fixedPointEqn}, the 1D reduction of gSQG on $\bbR^2$, under different values of $\alpha\in(0,1)$. Configuration:  $\alpha=0.1,0.3,0.5,0.7,0.9, N=1\times10^4$, domain truncation $[0,5]$, tolerance $\varepsilon=10^{-7}$, and the non-uniform mesh is $x_i=5(i/N)^2$. In the left subplot, we plot the approximate profiles $f_*$, together with the lowerbound $\max(0,1-x^2)$, for different $\alpha$. In the right subplot, we plot the iterations and the approximate profile $\Omega_*=-xf_*$ for different $\alpha$.  }
    \label{figure: 1DgSQGR2_varying_alpha}
\end{figure}

\begin{figure}[!ht] 
\includegraphics[height=0.366\textwidth]{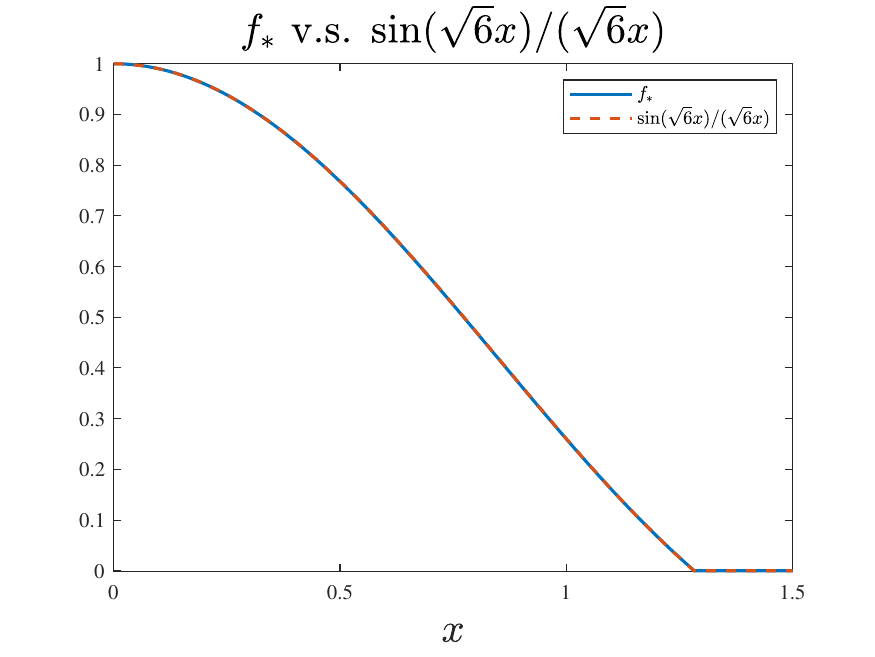}
\includegraphics[height=0.366\textwidth]{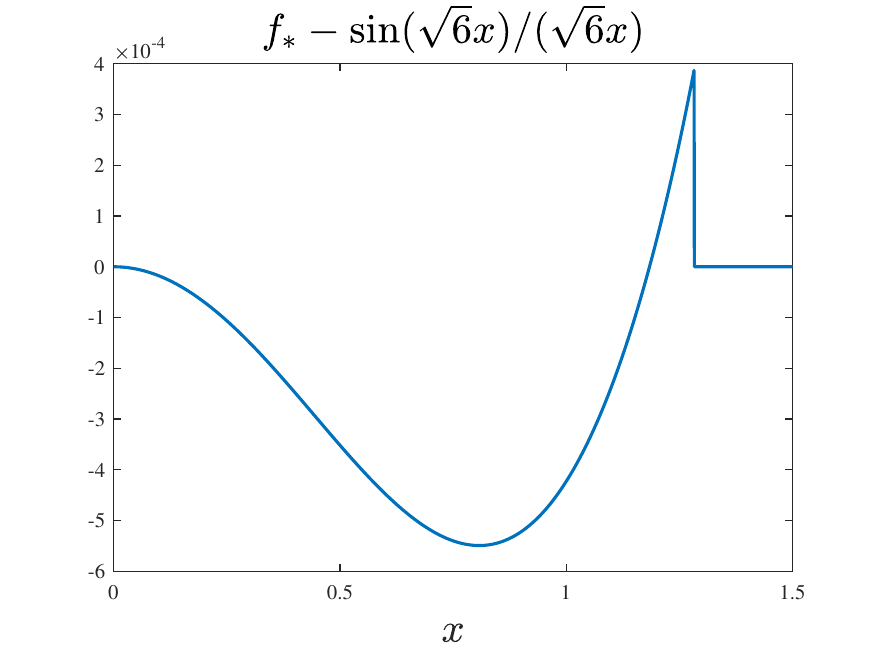}
\caption{Visualization for the limiting behavior of the self-similar profile of \eqref{Equation: fixedPointEqn} as $\alpha\to0$. 
Configuration: $\alpha=0.01$, $N=1\times10^4$, domain truncation $[0,5]$, tolerance $\varepsilon=10^{-7}$, and the non-uniform mesh is $x_i=5(i/N)^2$. 
In the left subplot, we plot the computed profile $f_*$ together with the predicted limiting profile $\sin(\sqrt{6}\,x)/(\sqrt{6}\,x)$. 
In the right subplot, we plot the difference $f_*(x)-\sin(\sqrt{6}\,x)/(\sqrt{6}\,x)$, showing that the numerical profile is already very close to the predicted limit when $\alpha$ is small. }
    \label{figure: 1DgSQGR2_alpha_001}
\end{figure}


\subsection{Numerical Simulation for 1D reduction of gSQG on $\HP$}
We use a nonuniform grid on $[0,2.43\times 10^{8}]$ with $N=5\times 10^4$ points, defined by $x_i=\sinh(20 i/N)$, $i=0,\dots,N-1$, to resolve both the near-origin region and the far field. The discrete operator provides an approximation of $U_x$; we then recover $U$ by a cumulative-sum procedure and compute the remaining quantities needed in the fixed-point map. We apply the iteration in \eqref{Equation: fixedPointEqnHP} to obtain a numerical approximation of a self-similar profile for \eqref{Equation: SelfSimilarProfileOmegaHP}. We take $\varepsilon=10^{-7}$ and stop when both the changes in $\Theta=xf$ and in the scaling parameter $c_\ell$ fall below $\varepsilon$.

\begin{figure}[!ht] 
\includegraphics[height=0.365\textwidth]{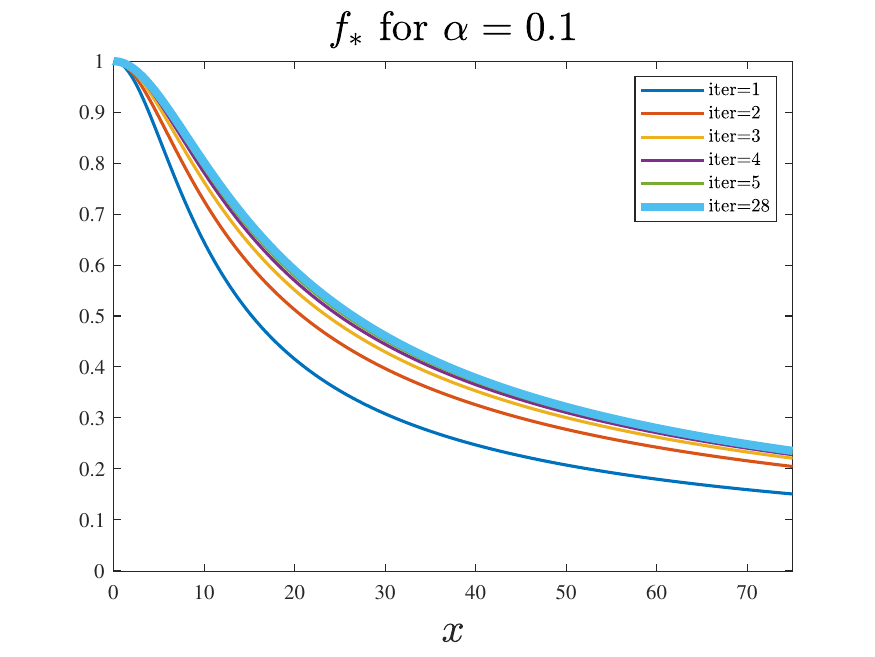}
\includegraphics[height=0.365\textwidth]{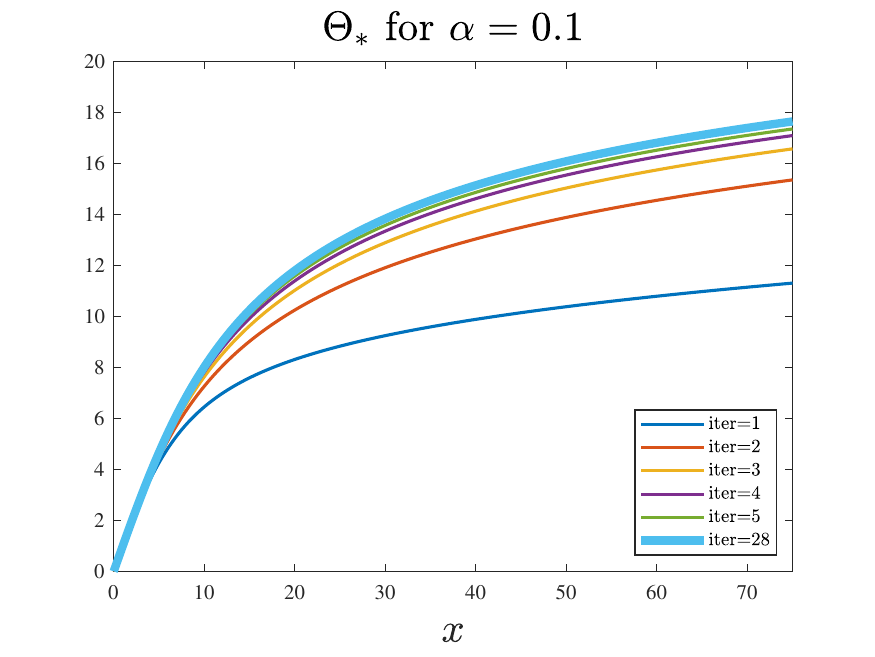}
\caption{Visualization for a self-similar profile for \eqref{Equation: fixedPointEqnHP}, the 1D reduction of gSQG on $\HP$. Configuration:  $\alpha=0.1, N=5\times10^4$, domain truncation $[0,2.43\times 10^{8}]$, tolerance $\varepsilon=10^{-7}$. We have $\Delta x\approx 4\times10^{-4}$ near origin and this $\Delta x$ increases as $x$ increases.  Then we compute and plot the profiles after scaling by the factor $\lambda$. In the left subplot,  we plot the iterations and the approximate profile for $f_*$.  In the right subplot, we plot the iterations and the approximate profile for $\Theta_*=xf_*$.  }
    \label{figure: 1DgSQGHP}
\end{figure}

Figure \ref{figure: 1DgSQGHP}  shows the functions during iteration of $\frR_\alpha$ for $\alpha=0.1$. In the plot, the colored curves are functions during the iteration. The bolded colored curve is the profile at the end of iterations, which indicates the approximate fixed-point. The figures are consistent with our theoretical results on that $f_*$ is monotone decreasing,  $f_*$ having no compact support, $c_\theta, c_\ell  >0$, $c_\theta-2\alpha c_\ell  = -1$. We observed that the computed profiles and scaling parameters are stable under moderate changes of $N$ and of the truncation range. For example, when we fix the domain truncation $[0,2.43\times 10^{8}]$ and test for $N=5\times10^4$ and $N=1\times10^4$,  the residual error for $c_\ell/c_\theta$ is $2.47\times10^{-5}$, and the residual error for $f_*$ is $2.86\times10^{-7}$. Moreover, if we fix $N=5\times10^4$ and test for domain truncation $[0,2.43\times 10^{8}]$ and $[0,4.87\times10^9]$, the residual error for $f_*$ is $2.74\times10^{-7}$, the residual error for $\widetilde{c_\ell}$ is $1.1\times10^{-2}$, and the residual error for $\widetilde{c_\theta}$ is $3.3\times10^{-3}$. This
indicates that the error from the truncation cannot be neglected.

Moreover, we also study how approximate profiles change with respect to $\alpha\in(0,1/2)$. We use a nonuniform grid on $[0,1.63\times 10^{6}]$ with $N=1\times 10^4$ points, defined by $x_i=\sinh(15 i/N)$, $i=0,\dots,N-1$, to resolve both the near-origin region and the far field. Other setups are the same as in Figure \ref{figure: 1DgSQGHP}. In Figure \ref{figure: 1DgSQGHP_varying_alpha} we plot self-similar profiles for different values of $\alpha\in(0,1/2)$. We note that we have rescaled the profile so that they all share $f_*(\sqrt{2}x/f_*''(0))\sim 1-x^2+O(x^4)$ when $x\to0$. This rescaling makes the shapes of the profiles comparable to each other. We note that rescaling each profile is equivalent to rescaling the corresponding parameters $c_\ell$ and $c_\theta$, but the ratio $c_\ell/c_\theta$ stays invariant under any rescaling.

Fortunately, in this scenario the limiting profile can also be analyzed.  Formally, as $\alpha\to1/2$, we have
$U\sim(-\Delta)^{\alpha-\frac12}\Theta \to \Theta$ in the profile equation \eqref{Equation: SelfSimilarProfileOmegaHP},
so the self-similar profile equation reduces to that of Burgers' equation. 
The latter admits the implicit solution
$\Theta_{\textbf{Burgers}}+\Theta_{\textbf{Burgers}}^3-x=0.$ 
Consequently, the limiting profile satisfies
$f_{\textbf{Burgers}}+x^2f_{\textbf{Burgers}}^3-1=0$
 as $\alpha\to 1/2.$
Although we do not establish the continuity of $f_*$ with respect to $\alpha$, this limiting behavior is observed in Figure \ref{figure: 1DgSQGHP_varying_alpha}, providing further evidence for the accuracy of the computations.

\begin{figure}[!ht] 
\includegraphics[height=0.365\textwidth]{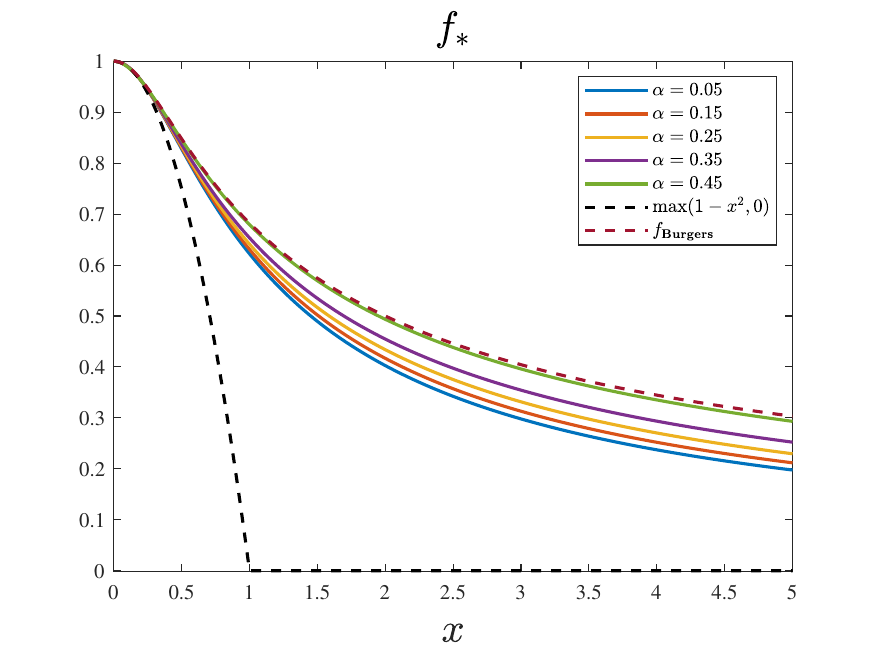}
\includegraphics[height=0.365\textwidth]{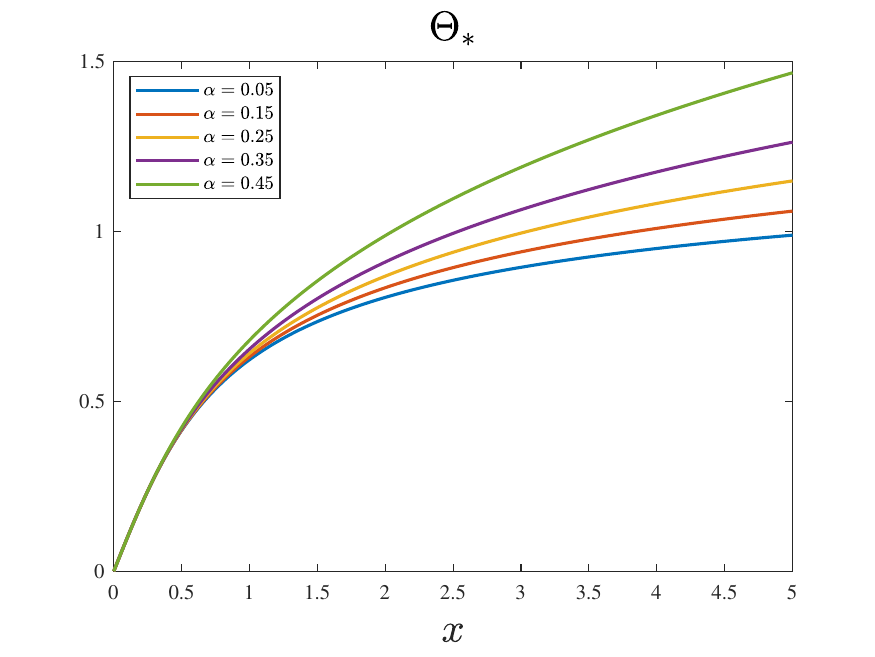}
\caption{Visualization for self-similar profiles for \eqref{Equation: fixedPointEqnHP}, the 1D reduction of gSQG on $\HP$, under different values of $\alpha\in(0,1/2)$. 
Configuration: $\alpha=0.05,0.15,0.25,0.35,0.45$, $N=1\times10^4$, domain truncation $[0,1.63\times 10^{6}]$, tolerance $\varepsilon=10^{-7}$. 
We have $\Delta x\approx 1.5\times10^{-3}$ near the origin and this $\Delta x$ increases as $x$ increases. 
In the left subplot, we plot the rescaled approximate profiles $f_*(\sqrt{2}x/f_*''(0))$, together with the lower bound $\max(0,1-x^2)$ and the Burgers limiting profile determined by $f_{\textbf{Burgers}}+x^2f_{\textbf{Burgers}}^3-1=0$, for different $\alpha$. 
In the right subplot, we plot the approximate profile $\Theta_*(\sqrt{2}x/f_*''(0))=\sqrt{2}x/f_*''(0)\cdot f_*(\sqrt{2}x/f_*''(0))$ . }
    \label{figure: 1DgSQGHP_varying_alpha}
\end{figure}

Finally, we also investigate how the parameters $c_\ell, c_{\theta}$ change with $\alpha\in(0,1/2)$.  We plot in Figure \ref{figure: 1DgSQGHP_varying_alpha_param} the value of ratio $c_\ell/c_{\theta}$ for different values of $\alpha\in(0,1/2)$. We also include the lowerbound $1/(2\alpha)$ from the proof and verify that we have indeed $c_\ell/c_\theta>1/(2\alpha)$.
\begin{figure}[!ht] 
\includegraphics[height=0.45\textwidth]
{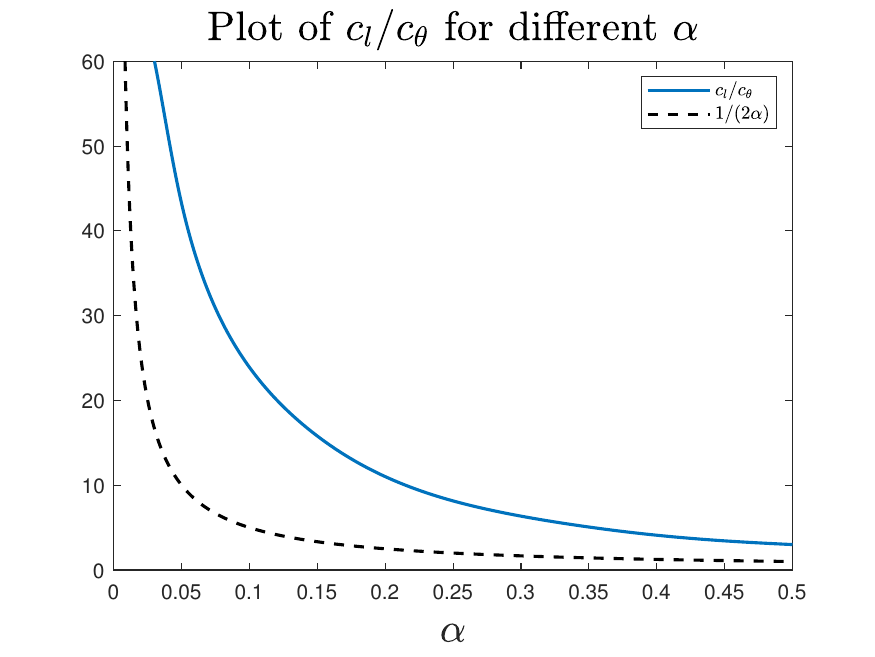}
\caption{Visualization for self-similar profiles for \eqref{Equation: fixedPointEqnHP}, the 1D reduction of gSQG on $\HP$, under different values of $\alpha\in(0,1/2)$. The setup is the same as in Figure \ref{figure: 1DgSQGHP_varying_alpha}. We plot value of $c_\ell/c_\theta$, together with the lowerbound $1/(2\alpha)$, for different $\alpha$.  }
    \label{figure: 1DgSQGHP_varying_alpha_param}
\end{figure}


\subsection{Numerical Simulation for gSQG on $\bbR^{2}_+$}
 We use a uniform grid on the square $[-16,16]^2$ with $N=2048$ grid points on $x$ and $y$ directions, i.e. $\Delta x = 1/64$. The numerical method is to approximate the singular convolution by discrete convolution. The advantage is that discrete convolution saves lots of RAM in computation (without building big matrix with size $\calO(N^2\times N^2)$) and it can be computed efficiently via the Fast Fourier Transform algorithm. The disadvantage is that using a uniform grid forces us to truncate out the tail and loses accuracy in the far field. Overall, the computation for the approximate self-similar profile for gSQG on $\HP$ is only for the purpose of illustrating the qualitative behavior of the profile.
 
 In Figures \ref{figure: gSQGHP_f}, \ref{figure: gSQGHP_-U1}, and \ref{figure: gSQGHP_U2}, the dashed curve indicates the initial function that we use to start the iteration. The colored curves are functions during the iteration. The bolded colored curve indicates the approximate fixed-point of the iteration, which means that this is an approximate self-similar profile to \eqref{Equation: gSQG-HP}. We observe that when $y=0$, the decay of $f_*$ is similar to that of the 1D reduction, meaning that our 1D reduction of gSQG is a meaningful model.

\begin{figure}[!ht] 
\includegraphics[width=\textwidth,page=1]{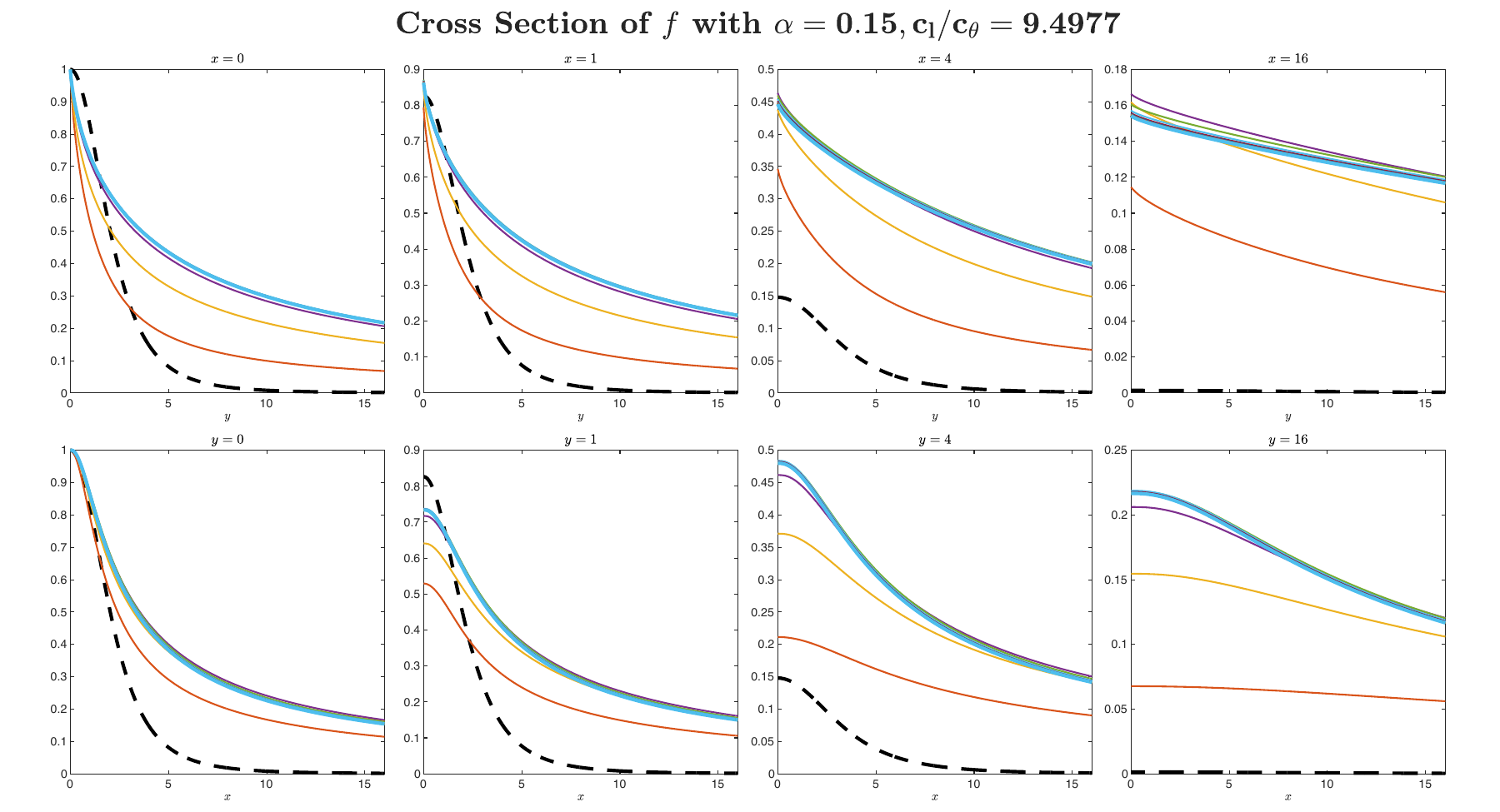}
\caption{Visualization for a candidate for the self-similar profile for \eqref{Equation: gSQG-HP}, the gSQG on $\HP$. Configuration:  $\alpha=0.15, N=2048,\Delta x=1/64$, domain truncation $[-16,16]^2$, tolerance $\varepsilon=10^{-7}$. The eight subplots are cross-sections of $f$ at $x=0,x=1,x=4, x=16, y=0, y=1, y=4, y=16$, respectively.  }
    \label{figure: gSQGHP_f}
\end{figure}

\begin{figure}[!ht] 
\includegraphics[width=\textwidth,page=2]{figures/Plot_N=1024_alpha=0.15.pdf}
\caption{Visualization for a candidate for the self-similar profile for \eqref{Equation: gSQG-HP}, the gSQG on $\HP$. Configuration:  $\alpha=0.15, N=2048,\Delta x=1/64$, domain truncation $[-16,16]^2$, tolerance $\varepsilon=10^{-7}$. The eight subplots are cross-sections of $-U1$ at $x=0,x=1,x=4, x=16, y=0, y=1, y=4, y=16$, respectively.  }
    \label{figure: gSQGHP_-U1}
\end{figure}

\begin{figure}[!ht] 
\includegraphics[width=\textwidth,page=3]{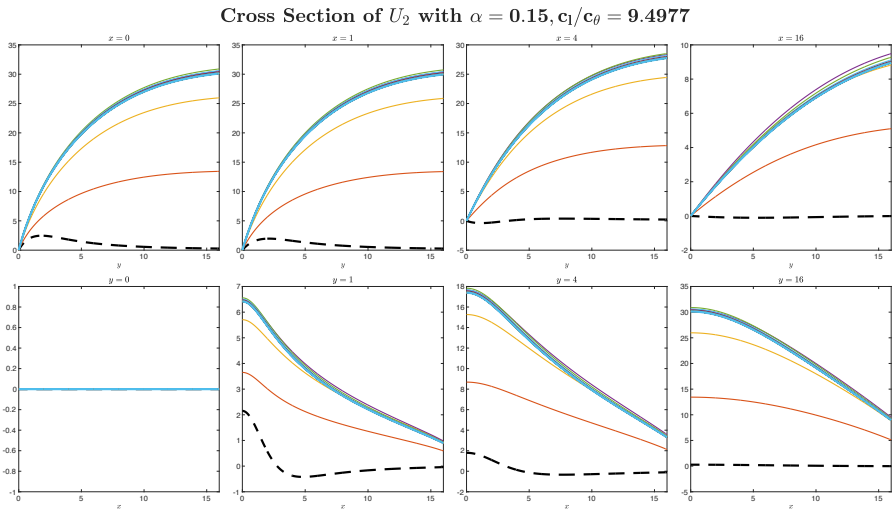}
\caption{Visualization for a candidate for the self-similar profile for \eqref{Equation: gSQG-HP}, the gSQG on $\HP$. Configuration:  $\alpha=0.15, N=2048,\Delta x=1/64$, domain truncation $[-16,16]^2$, tolerance $\varepsilon=10^{-7}$. The eight subplots are cross-sections of $U_2$ at $x=0,x=1,x=4, x=16, y=0, y=1, y=4, y=16$, respectively. }
    \label{figure: gSQGHP_U2}
\end{figure}


\section{Acknowledgments and Disclosure of Funding}
Prof. Thomas Y. Hou is supported by NSF grants DMS-2205590, DMS-2512878, the Choi Family Gift fund and Dr. Mike Yan Gift fund. Prof. Y. Sire is partially supported by DMS NSF grant 2154219, ``Regularity vs singularity formation in elliptic and parabolic equations''.

\appendix        
\section{Auxiliary functions}\label{Section: Auxiliary functions}

\begin{lemma}\label{Lemma: AuxiliaryFunctionF1}    
Let us define an auxiliary function $F_{1,\gamma}$ as follows: if $\gamma\in(-\infty,0)\cup(0,1)$, 
$$F_{1,\gamma}(t) := \frac{ 2\gamma (2+\gamma)t - (1+\gamma-t)(1+t)|1+t|^\gamma + (1-t)(1+\gamma+t)|1-t|^\gamma }{ \gamma (1+\gamma)(2+\gamma) t}, $$
and if $\gamma=0$,
$$F_{1,0}(t) := 1 - \frac{1-t^2}{2t}\log\abs{\frac{t+1}{t-1}} \ .$$
Then we have the following properties for $F_{1,\gamma}$ and $F_{1,\gamma}'$:
\begin{equation}
\begin{split}
&F_{1,\gamma}'(t) = 
\begin{cases}
 - \frac{(1+t^2)\rbracket{|1-t|^{\gamma} - |1+t|^{\gamma}  } + \gamma t \rbracket{ |1-t|^{\gamma} + |1+t|^{\gamma} } }{ \gamma (2+\gamma)t^2} & \text{ if } \gamma\neq0 \\
- \frac{1}{t} + \frac{t^2+1}{2t^2}\log\abs{\frac{t+1}{t-1}} & \text{ if } \gamma=0 \end{cases} \ , \\
&F_{1,\gamma}'(1/t) = t^{2-\gamma} F_{1,\gamma}'(t) \ , \\
&F_{1,\gamma}(0) =  F_{1,\gamma}'(0) = 0 \ ,  \\
& \lim_{t\to+\infty} F_{1,\gamma}(t) =\frac{2}{1+\gamma} \ , \\
&  \lim_{t\to0+} \frac{F_{1,\gamma}'(t)}{t} =\frac{2(2-\gamma)}{3} \ , \\
&  F_{1,\gamma}'(t) > 0 \text{ on } t\in(0,\infty)   \ .
  \end{split} 
\end{equation}
\end{lemma}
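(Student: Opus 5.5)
The plan is to verify each listed property of $F_{1,\gamma}$ directly from the closed form, treating $\gamma\neq0$ first and recovering $\gamma=0$ as the limit $\gamma\to0$. Equivalently, $\gamma=0$ can be handled by the same computation with $|1\pm t|^\gamma/\gamma$ replaced by $\log|1\pm t|$.

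\textbf{Derivative formula.} Write $N(t)=2\gamma(2+\gamma)t-(1+\gamma-t)(1+t)^{1+\gamma}+(1-t)(1+\gamma+t)|1-t|^\gamma$ for $t\ge0$. Then $F_{1,\gamma}=N/(\gamma(1+\gamma)(2+\gamma)t)$. Differentiating, $F'_{1,\gamma}=(tN'-N)/(\gamma(1+\gamma)(2+\gamma)t^2)$. I will compute $N'$ term by term and simplify $tN'-N$. The linear term cancels. Each power term should collapse to $-(1+\gamma)\big[(1+t^2)(|1-t|^\gamma-|1+t|^\gamma)+\gamma t(|1-t|^\gamma+|1+t|^\gamma)\big]$, which gives the stated formula. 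This is routine algebra using $(1-t)|1-t|^{\gamma}$ being $C^1$ with derivative $-(1+\gamma)|1-t|^\gamma$.

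\textbf{Symmetry, values at $0$ and $\infty$.} For $F'(1/t)=t^{2-\gamma}F'(t)$, substitute $1/t$ into the derivative formula and multiply numerator and denominator by $t^{2+\gamma}$. Then $|1\pm1/t|^\gamma=t^{-\gamma}|t\pm1|^\gamma$ and $(1+t^{-2})=t^{-2}(1+t^2)$, while the odd part in $|1-t|^\gamma-|1+t|^\gamma$ is unchanged. The claimed identity follows by inspection.

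Near $t=0$, Taylor expand $|1\pm t|^\gamma=1\pm\gamma t+\frac{\gamma(\gamma-1)}2t^2\pm\frac{\gamma(\gamma-1)(\gamma-2)}6t^3+O(t^4)$ in the $F'$ numerator. The constant and $t$ terms cancel. The $t^2$ term cancels too, since the first bracket is odd and the second is even. The $t^3$ coefficient is $-\big[-2\gamma-\tfrac{\gamma(\gamma-1)(\gamma-2)}3+\gamma^2(\gamma-1)\big]$. Dividing by $\gamma(2+\gamma)t^2$ should give $F'(t)\sim\frac{2(2-\gamma)}3t$, hence $F'(0)=0$ and the stated limit. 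Expanding $N$ similarly, $N=O(t^3)$ up to a factor, gives $F(0)=0$.

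For $t\to\infty$, write $|1\pm t|^{\gamma}=t^\gamma(1\pm1/t)^\gamma$ in $N$. Since $\gamma<1$, the terms of order $t^{2+\gamma}$ and $t^{1+\gamma}$ cancel, so only $2\gamma(2+\gamma)t/(\gamma(1+\gamma)(2+\gamma)t)\to 2/(1+\gamma)$ survives; for $\gamma<0$ the power terms decay outright. I will check that the leftover terms are $O(t^{\gamma})$, or a constant times $t^{\gamma}$, divided by $t$, so they vanish.

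\textbf{Positivity (main obstacle).} By the symmetry, it suffices to prove $F'>0$ on $(0,1]$. There, set $G(t)=-(1+t^2)(a-b)-\gamma t(a+b)$ with $a=(1-t)^\gamma$ and $b=(1+t)^\gamma$. I need $G/\gamma>0$. My first attempt is the series route. Expand $(1\pm t)^\gamma=\sum_k\binom{\gamma}{k}(\pm t)^k$, so $a-b=-2\sum_{k\text{ odd}}\binom\gamma k t^k$ and $a+b=2\sum_{k\text{ even}}\binom\gamma kt^k$. Then $G/\gamma=2\sum_{k\text{ odd}}c_kt^k$ with
\[ c_k=\tfrac1\gamma\Big[\tbinom{\gamma}{k}+\tbinom{\gamma}{k-2}-\gamma\tbinom{\gamma}{k-1}\Big], \]
where binomials with negative index vanish. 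The ratio identity $\binom{\gamma}{k}=\binom{\gamma}{k-1}\frac{\gamma-k+1}{k}$ lets me write $c_k=\frac1\gamma\binom{\gamma}{k-2}\big[\frac{(\gamma-k+2)(\gamma-k+1)}{k(k-1)}-\frac{\gamma(\gamma-k+2)}{k-1}+1\big]$. I expect this to factor as $\frac{1}{\gamma}\binom{\gamma}{k-2}\frac{(k-2-\gamma)(\gamma+1)\cdot(\text{something positive})}{k(k-1)}$, giving $c_k>0$ for all odd $k\ge3$ and both signs of $\gamma<1$. Convergence on $[0,1)$ plus continuity at $t=1$ (with $G(1)/\gamma=(2\cdot2^\gamma-2^\gamma\gamma)/\gamma\cdot$ sign check) completes the argument.

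If the factorization fails, the fallback is to show $G(0)=G'(0)=G''(0)=0$ and $\operatorname{sign}(\gamma)G'''>0$ on $(0,1)$ by direct differentiation. The $\gamma=0$ case follows from the same series with $\log$ coefficients, or by letting $\gamma\to0$ and using strict positivity of the limit coefficients.
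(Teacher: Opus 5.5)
Your route is the paper's. The paper also proves $F_{1,\gamma}'>0$ by expanding it in powers of $t$ on $(0,1)$ and checking that every coefficient is positive, with $t>1$ then following from $F_{1,\gamma}'(1/t)=t^{2-\gamma}F_{1,\gamma}'(t)$. Your direct checks of the derivative formula (the $tN'-N$ computation), the symmetry, and the limits at $0$ and $\infty$ are correct.

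The one step to fix is the factorization you predict for $c_k$, which is wrong. The bracket simplifies to $\frac{(k-1-\gamma)(2+\gamma)}{k}$, so
\[
c_k=\frac{(2+\gamma)(k-1-\gamma)}{\gamma\,k}\binom{\gamma}{k-2}.
\]
The relevant factor is therefore $(2+\gamma)$, not $(1+\gamma)$, and it cancels against the denominator $\gamma(2+\gamma)t^2$. With $k=2j+1$ this gives exactly the paper's series
\[
F_{1,\gamma}'(t)=\sum_{j\ge1}\gamma^{-1}\binom{\gamma}{2j-1}\frac{2(2j-\gamma)}{2j+1}\,t^{2j-1}.
\]
Its coefficients are positive because $\gamma^{-1}\binom{\gamma}{2j-1}=\frac{(\gamma-1)\cdots(\gamma-2j+2)}{(2j-1)!}$ is a product of an even number of negative factors when $\gamma<1$.

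Two smaller points follow from this. First, the quantity to show positive is $G/(\gamma(2+\gamma))$, not $G/\gamma$; the two agree only for $\gamma>-2$. Second, your value $G(1)/\gamma=2^\gamma(2-\gamma)/\gamma$ holds only for $\gamma\in(0,1)$. For $\gamma\le 0$ one has $F_{1,\gamma}'(t)\to+\infty$ as $t\to1$, which the positive-coefficient series handles automatically.
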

\begin{proof} When $\gamma\neq 0$, this can be proven using the Taylor expansion for $F_{1,\gamma}$ and $F_{1,\gamma}'$:
\begin{equation*}
\begin{split}
& F_{1,\gamma}(t) =  \sum_{k=1}^\infty \gamma^{-1}\begin{pmatrix} \gamma \\ 2k-1 \end{pmatrix} \frac{2k-\gamma}{k(2k+1)} t^{2k} \ , \\
& F_{1,\gamma}'(t) =  \sum_{k=1}^\infty \gamma^{-1}\begin{pmatrix} \gamma \\ 2k-1 \end{pmatrix} \frac{2(2k-\gamma)}{2k+1} t^{2k-1} \ , \\
& \gamma^{-1} \begin{pmatrix} \gamma \\ 2k-1 \end{pmatrix} = \frac{(\gamma-1)\cdots (\gamma-2k+2)}{(2k-1)!} > 0  \ .  \qedhere
\end{split} 
\end{equation*}
\end{proof}

\begin{lemma}\label{Lemma: Auxiliary Function2} 
We define the auxiliary function $F_{2,\gamma}$ by solving the following ODE: 
$$tF_{2,\gamma}'(t)- (1+\gamma) F_{2,\gamma}(t)  = t^{-1}F_{1,\gamma}'(t) - \frac{2(2-\gamma)}{3} \ , $$
which will give us $ -\partial_{\xi} (\xi^{1+\gamma} F_{2,\gamma}(x/\xi)) = \xi^{1+\gamma}/x F_{1,\gamma}'(x/\xi) - \frac{2(2-\gamma)}{3}\xi^\gamma $.
Then $F_{2,\gamma}$ has the following properties:
\begin{equation}
\begin{split}
& F_{2,\gamma}(t) := \frac{1}{3\gamma(1+\gamma)(2+\gamma)(4+\gamma)t^3} \Big[ 2\gamma(2-\gamma)(2+\gamma)(4+\gamma)t^3 \\
& \ \ \ \ \ \ \ \ \ \ \ + 3(1+\gamma+(2+\gamma-\gamma^2)t^2-3t^4)(|1-t|^{\gamma} - |1+t|^{\gamma} )  \\
&  \ \ \ \ \ \ \ \ \ \ \ +3((\gamma+\gamma^2)t-3\gamma t^3 )(|1-t|^{\gamma} + |1+t|^{\gamma})  \Big] \ , \\
& F_{2,\gamma}'(t) = - \frac{ (3+(2+\gamma^2)t^2+3t^4)(|1-t|^{\gamma}-|1+t|^{\gamma}) +3\gamma t(1+t^2)(|1-t|^{\gamma} + |1+t|^{\gamma}) }{\gamma(2+\gamma)(4+\gamma)t^4} \ , \\
& F_{2,\gamma}'(1/t) = t^{4-\gamma} F_{2,\gamma}'(t) \ , \\
& F_{2,\gamma}(0) = F_{2,\gamma}'(0)=0 \ , \\
&  \lim_{t\to\infty} F_{2,\gamma}(t) = \frac{2(2-\gamma)}{3(1+\gamma)} \ , \\
&  F_{2,\gamma}'(t)  > 0 \text{ on } t\in (0,\infty)  \ .
\end{split} 
\end{equation}  
\end{lemma}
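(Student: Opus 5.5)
The plan is to derive everything from the ODE that defines $F_{2,\gamma}$ together with the properties of $F_{1,\gamma}$ in Lemma \ref{Lemma: AuxiliaryFunctionF1}.

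First, the ODE $tF_{2,\gamma}'-(1+\gamma)F_{2,\gamma}=t^{-1}F_{1,\gamma}'(t)-\frac{2(2-\gamma)}{3}$ is linear with integrating factor $t^{-(2+\gamma)}$. Its unique solution vanishing at $0$ is
\[
F_{2,\gamma}(t)=t^{1+\gamma}\int_0^t s^{-2-\gamma}\Big(s^{-1}F_{1,\gamma}'(s)-\tfrac{2(2-\gamma)}{3}\Big)\idiff s .
\]
The integrand is $O(s^{-\gamma})$ near $0$, by the expansion $F_{1,\gamma}'(s)=\frac{2(2-\gamma)}{3}s+O(s^3)$ from the Taylor series in Lemma \ref{Lemma: AuxiliaryFunctionF1}. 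Hence the integral converges and $F_{2,\gamma}(t)=O(t^{3})$, which gives $F_{2,\gamma}(0)=F_{2,\gamma}'(0)=0$. I would then check the explicit closed form by differentiating the stated expression and substituting into the ODE, using the closed form of $F_{1,\gamma}'$; this is routine algebra. The same computation produces the stated formula for $F_{2,\gamma}'$. The identity $-\partial_\xi(\xi^{1+\gamma}F_{2,\gamma}(x/\xi))=\dots$ follows from the chain rule:
\[
\partial_\xi\big(\xi^{1+\gamma}F_{2,\gamma}(x/\xi)\big)=\xi^{\gamma}\big[(1+\gamma)F_{2,\gamma}-tF_{2,\gamma}'\big]_{t=x/\xi},
\]
which equals minus the right-hand side of the ODE times $\xi^\gamma$.

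Second, the symmetry $F_{2,\gamma}'(1/t)=t^{4-\gamma}F_{2,\gamma}'(t)$ comes directly from the closed form of $F_{2,\gamma}'$. Replacing $t$ by $1/t$ sends $|1\mp t|^\gamma$ to $t^{-\gamma}|t\mp1|^\gamma$ and the polynomial $3+(2+\gamma^2)t^2+3t^4$ to $t^{-4}$ times itself. The term $t(1+t^2)$ maps to $t^{-3}(1+t^2)$, and the prefactor $t^{-4}$ becomes $t^4$. Collecting the powers gives the factor $t^{4-\gamma}$. For the limit at infinity, note that $t^{-1}F_{1,\gamma}'(t)\to0$: by the symmetry of $F_{1,\gamma}'$, $F_{1,\gamma}'(t)=t^{\gamma-2}F_{1,\gamma}'(1/t)=O(t^{\gamma-3})$. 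If $F_{2,\gamma}$ has a finite limit $L$ with $tF_{2,\gamma}'\to0$, the ODE gives $-(1+\gamma)L=-\frac{2(2-\gamma)}{3}$, which is the stated value. Rigorously, I would read the limit off the closed form by expanding $|1\pm t|^\gamma$ for large $t$. Alternatively, the symmetry gives $F_{2,\gamma}'(t)=t^{\gamma-4}F_{2,\gamma}'(1/t)=O(t^{\gamma-3})$, which is integrable, so the limit exists.

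The main obstacle is the positivity $F_{2,\gamma}'>0$ on $(0,\infty)$. By the symmetry just proved, it suffices to treat $t\in(0,1)$; the case $t=1$ follows by continuity, or by direct evaluation, which is positive for $\gamma<1$. On $(0,1)$ I would expand in a power series, as in Lemma \ref{Lemma: AuxiliaryFunctionF1}:
\[
|1-t|^\gamma-|1+t|^\gamma=-2\sum_k\binom{\gamma}{2k-1}t^{2k-1},\qquad |1-t|^\gamma+|1+t|^\gamma=2\sum_k\binom{\gamma}{2k}t^{2k}.
\]
I would then collect coefficients to write $F_{2,\gamma}'(t)=\sum_{k\ge1}a_k t^{2k-1}$ with
\[
a_k=\gamma^{-1}\binom{\gamma}{2k+1}\cdot(\text{rational function of }k,\gamma).
\]
The expectation, mirroring Lemma \ref{Lemma: AuxiliaryFunctionF1}, is that the $t^{-1}$ and $t$ terms cancel. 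It also makes sense that $\gamma^{-1}\binom{\gamma}{2k+1}$ has a definite sign for $\gamma<1$: it is positive, being a product of $2k$ negative factors over a factorial. The remaining factor should be a positive multiple of something like $(2k-\gamma)(2k+2-\gamma)$, also divided by $(2+\gamma)(4+\gamma)>0$.

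If this coefficient bookkeeping fails to be manifestly positive, I would fall back on the ODE. Differentiating it gives
\[
tF_{2,\gamma}''-\gamma F_{2,\gamma}'=(t^{-1}F_{1,\gamma}')' .
\]
Hence $(t^{-\gamma}F_{2,\gamma}')'=t^{-1-\gamma}(t^{-1}F_{1,\gamma}')'$, so positivity of $F_{2,\gamma}'$ reduces to the sign of $(F_{1,\gamma}'/t)'$ on $(0,1)$ together with $F_{2,\gamma}'(0^+)=0$. That sign can in turn be read off the series of $F_{1,\gamma}'$, whose coefficients of $t^{2k-2}$ in $F_{1,\gamma}'/t$ are all positive. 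I expect this reduction to be the cleanest route, so I would try it first.
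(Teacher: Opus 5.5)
Your plan is sound, and for the one nontrivial property, $F_{2,\gamma}'>0$, the route you would try first differs from the paper's. The paper writes the Taylor series of $F_{2,\gamma}'$ at $0$, whose $t^{2k-1}$-coefficient is $\gamma^{-1}\binom{\gamma}{2k-1}\frac{2(2k-\gamma)(2k+2-\gamma)}{(2k+1)(2k+3)}>0$. That is your first, unfinished bookkeeping plan.

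Your identity $(t^{-\gamma}F_{2,\gamma}')'=t^{-1-\gamma}(t^{-1}F_{1,\gamma}')'$ is the integrated form of the same computation. Write $t^{-1}F_{1,\gamma}'=\sum_k a_kt^{2k-2}$ and $F_{2,\gamma}'=\sum_k b_kt^{2k-1}$. The identity then says exactly $b_k=\frac{2k}{2k-1-\gamma}a_{k+1}$, which reproduces the paper's coefficients from those in Lemma \ref{Lemma: AuxiliaryFunctionF1}.

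What your version buys:
\begin{itemize}
\item Positivity is inherited directly from Lemma \ref{Lemma: AuxiliaryFunctionF1}, with no new coefficients to compute.
\item You make explicit that the symmetry $F_{2,\gamma}'(1/t)=t^{4-\gamma}F_{2,\gamma}'(t)$ carries positivity from $(0,1)$ to $(1,\infty)$. The paper needs this too, since its series converge only for $|t|<1$, but leaves it implicit.
\end{itemize}
Your chain-rule identity, the symmetry computation, and the limit at infinity are all fine; the paper does not spell these out.

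There is one slip, and it matters exactly at the positivity step. For $\gamma>-1$, which covers both $\gamma=1-2\alpha$ and $\gamma=-2\alpha$, the homogeneous solution $t^{1+\gamma}$ vanishes at $0$. For $\gamma\in(0,1)$ its derivative vanishes there as well. This has three consequences:
\begin{itemize}
\item ``The unique solution vanishing at $0$'' is false.
\item Checking that the closed form solves the ODE only shows it equals your integral formula plus $Ct^{1+\gamma}$.
\item Your reduction needs $\lim_{t\to0^+}t^{-\gamma}F_{2,\gamma}'(t)=0$, not $F_{2,\gamma}'(0^+)=0$. Adding $Ct^{1+\gamma}$ adds the constant $C(1+\gamma)$ to $t^{-\gamma}F_{2,\gamma}'$, and with $C<0$ this destroys positivity near $0$.
\end{itemize}

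The repair is short.
\begin{itemize}
\item Your integral formula gives $F_{2,\gamma}=O(t^2)$, not $O(t^3)$: the integrand is $O(s^{-\gamma})$, so the integral is $O(t^{1-\gamma})$. The ODE then gives $F_{2,\gamma}'=O(t)$, so $t^{-\gamma}F_{2,\gamma}'\to0$ because $\gamma<1$.
\item To get $C=0$, note that your integral formula is a power series in $t^2$ (integrate the series of $t^{-1}F_{1,\gamma}'-\frac{2(2-\gamma)}{3}$ termwise). The stated closed form is $t^{-3}$ times a function analytic on $|t|<1$. Both are therefore meromorphic at $0$, whereas $t^{1+\gamma}$ is not, since $1+\gamma\notin\mathbb{Z}$ for $\gamma\in(-1,1)\setminus\{0\}$. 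This identification is exactly what the paper's Taylor expansion supplies.
\end{itemize}

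A minor point: at $t=1$, direct evaluation works only for $\gamma\in(0,1)$, where $F_{2,\gamma}'(1)=\frac{2^\gamma(2-\gamma)(4-\gamma)}{\gamma(2+\gamma)(4+\gamma)}>0$. For $\gamma<0$ the factor $|1-t|^\gamma$ blows up, and $F_{2,\gamma}'(t)\to+\infty$ as $t\to1$, so there is no finite value and no continuity argument.
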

\begin{proof}
We can apply the Taylor expansion to $F_{2,\gamma}$ and $F_{2,\gamma}'$:
\begin{equation*}
\begin{split}
& F_{2,\gamma}(t) =  \sum_{k=1}^\infty  \gamma^{-1} \begin{pmatrix} \gamma \\ 2k-1 \end{pmatrix} \frac{(2k-\gamma)(2k+2-\gamma)}{k(2k+1)(2k+3)} t^{2k} \ , \\
& F_{2,\gamma}'(t) =  \sum_{k=1}^\infty \gamma^{-1} \begin{pmatrix} \gamma \\ 2k-1 \end{pmatrix} \frac{2(2k-\gamma)(2k+2-\gamma)}{(2k+1)(2k+3)} t^{2k-1} \ , \\
& \gamma^{-1} \begin{pmatrix} \gamma \\ 2k-1 \end{pmatrix} = \frac{(\gamma-1)\cdots (\gamma-2k+2)}{(2k-1)!} > 0 \ . \qedhere
\end{split} 
\end{equation*}
\end{proof}

\begin{lemma}\label{Lemma:FracCommCutoff}
Let $\chi\in C_c^\infty(\bbR)$ and let $\gamma\ge 0$, then we have
\[
[(-\Delta)^{\gamma/2},\chi]: H^{t+\gamma-1}_{loc}(\bbR)\to H^t_{loc}(\bbR)
\]
continuously for all $t\in\bbR$.
\end{lemma}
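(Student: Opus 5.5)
The plan is to treat $[(-\Delta)^{\gamma/2},\chi]$ as a pseudo-differential operator of order $\gamma-1$, with its symbol obtained from symbolic calculus. We first isolate the non-smooth part of $|\xi|^\gamma$ at $\xi=0$. Fix $\phi\in C_c^\infty(\bbR)$ with $\phi\equiv1$ near the origin, and write
\[
(-\Delta)^{\gamma/2}=A+B,\qquad \widehat{Au}=(1-\phi(\xi))|\xi|^\gamma\hat u,\qquad \widehat{Bu}=\phi(\xi)|\xi|^\gamma\hat u .
\]
The multiplier $(1-\phi)|\xi|^\gamma$ is a classical symbol in $S^\gamma$. The multiplier $\phi|\xi|^\gamma$ is compactly supported and integrable, so its kernel is a smooth function that decays like $|x|^{-1-\gamma}$ at infinity.

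\textbf{The smoothing piece $[B,\chi]$.} Since $\hat B$ is compactly supported, $B$ maps compactly supported distributions (or tempered distributions) into $C^\infty$ functions with all derivatives bounded. Multiplying by $\chi$ keeps the result smooth. Hence $[B,\chi]u=B(\chi u)-\chi Bu$ lies in $C^\infty\subset H^t_{loc}$ for every $t$.

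To make this precise on $H^{s}_{loc}$, we reduce to compactly supported data. Given a bounded set $K$ on which we test, we write $u=\chi_2u+(1-\chi_2)u$, where $\chi_2\in C_c^\infty$ equals $1$ on a large neighbourhood of $\supp\chi\cup K$.

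\textbf{The principal piece $[A,\chi]$.} By the composition formula, $[A,\chi]$ is a pseudo-differential operator whose symbol has the asymptotic expansion
\[
\sum_{k\ge1}\frac{1}{i^k k!}\,\partial_\xi^k a(\xi)\,\partial_x^k\chi(x)\in S^{\gamma-1}.
\]
The $k=0$ term cancels exactly, and this cancellation is what produces the gain of one derivative. The standard $H^{s}\to H^{s-m}$ boundedness of order-$m$ operators then gives that $[A,\chi](\chi_2u)\in H^t$ whenever $\chi_2u\in H^{t+\gamma-1}$.

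\textbf{The far-field remainder.} It remains to handle the pieces applied to $(1-\chi_2)u$:
\[
[(-\Delta)^{\gamma/2},\chi]\bigl((1-\chi_2)u\bigr)=(-\Delta)^{\gamma/2}\bigl(\chi(1-\chi_2)u\bigr)-\chi(-\Delta)^{\gamma/2}\bigl((1-\chi_2)u\bigr).
\]
The first term vanishes, because $\chi(1-\chi_2)=0$. For the second term, $\supp\chi$ and $\supp(1-\chi_2)$ are separated by a positive distance. On that region the Schwartz kernel of $(-\Delta)^{\gamma/2}$ is smooth, given by $c|x-y|^{-1-\gamma}$ when $\gamma$ is not an even integer, and identically zero when it is. So this term is smooth near $\supp\chi$, as in the far-component argument of Lemma \ref{Lemma: RegMapTa1}.

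This is also where we expect the main obstacle. For a general $u\in H^{s}_{loc}$ the pairing with the tail of the kernel may diverge, since $u$ can grow at infinity. We therefore interpret the statement, as it is used in the paper, for $u$ with compact support or polynomially controlled growth. The kernel's decay $|y|^{-1-\gamma}$, together with the fact that $xf\chi_1$ is compactly supported in the applications, makes the tail integral and all its $x$-derivatives absolutely convergent. Continuity of the commutator then follows from the continuity of each piece.
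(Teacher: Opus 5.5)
Your proposal is correct and follows the same route as the paper's proof. You split $|\xi|^\gamma$ into a high-frequency classical symbol in $S^\gamma$ and a compactly supported low-frequency multiplier; the commutator of the former with $\chi$ has order $\gamma-1$ by the composition formula, and the latter contributes only a smoothing term. You then conclude by Sobolev mapping, and you supply the details the paper delegates to Taylor: the explicit symbol expansion, the localization by $\chi_2$, and the off-diagonal kernel argument.

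Your caveat that $(-\Delta)^{\gamma/2}$ is not defined on arbitrary $H^{s}_{loc}$ data is a fair sharpening that the paper leaves implicit. Two small corrections: the tail integral converges for growth slower than $|y|^{\gamma}$, not merely polynomial growth, and $B$ is not well defined on arbitrary tempered distributions. For the compactly supported inputs actually used in Lemma \ref{Lemma: RegMapTa1}, the far-field term simply vanishes once $\chi_2\equiv1$ on the support.
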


\begin{proof}
This is a standard consequence of pseudodifferential commutator calculus and Sobolev mapping:
the high-frequency part of $(-\Delta)^{\gamma/2}$ is a classical pseudodifferential operator of order $\gamma$,
the low-frequency part is smoothing, and multiplication by $\chi$ has order $0$. Hence the commutator has order
$\gamma-1$, which yields the stated mapping by the Sobolev mapping theorem.
See \cite{taylor2006pseudo} for the commutator order drop
and for the Sobolev mapping theorem, together with the standard high/low frequency decomposition of the homogeneous symbol $|\xi|^\gamma$.
\end{proof}

\bibliographystyle{acm}
\bibliography{RefDatabase} 

\end{document}